\numberwithin{equation}{section}
\setlist[enumerate]{label=\rm{(\arabic*)}, ref={\rm\arabic*}}
\setlist[enumerate,2]{label=\rm{(\roman*)}, ref={\rm\roman*}}
\theoremstyle{plain}
\newtheorem{theorem}{Theorem}[section]
\newtheorem{corollary}[theorem]{Corollary}
\newtheorem{proposition}[theorem]{Proposition}
\newtheorem{lemma}[theorem]{Lemma}
\newtheorem{theoremA}{Theorem}
\newtheorem{corollaryA}[theoremA]{Corollary}
\theoremstyle{definition}
\newtheorem{definition}[theorem]{Definition}
\newtheorem*{conventions}{Conventions}
\theoremstyle{remark}
\newtheorem{remark}[theorem]{Remark}
\newtheorem{example}[theorem]{Example}
\def\Autzero{\operatorname{Aut}^\circ}
\def\Aut{\operatorname{Aut}}
\def\PP{\mathbb{P}}
\def\FF{\mathbb{F}}
\def\Pic{\operatorname{Pic}}
\def\kk{\mathbf{k}}
\def\seg{\mathfrak{S}}
\def\PGL{\operatorname{PGL}}
\def\Bir{\operatorname{Bir}}
\def\Fix{\operatorname{Fix}}
\def\inv{\operatorname{inv}}
\def\lra{\longrightarrow}
\newcommand{\var}[1]{{\operatorname{#1}}}
\newcommand{\xdashrightarrow}{\joinrel{\var{-}}\joinrel\dashrightarrow}
\title{Algebraic subgroups of the group of birational transformations of ruled surfaces}
\author{Pascal Fong}
\address{Universit\"at Basel, Departement Mathematik und Informatik, Spiegelgasse 1, CH--4051 Basel, Switzerland}
\email{pascal.fong@unibas.ch}
\begin{document}


\maketitle

\begin{prelims}

\DisplayAbstractInEnglish

\bigskip

\DisplayKeyWords

\medskip

\DisplayMSCclass

\end{prelims}


\newpage

\setcounter{tocdepth}{1}

\tableofcontents


\section{Introduction}

In this article, all varieties are defined over an algebraically closed field $\kk$, algebraic groups are smooth group schemes of finite type (or, equivalently, reduced group schemes of finite type), and $C$ denotes a smooth projective curve of genus $g$. The main results, namely Theorem~\ref{A} and Corollary~\ref{D}, hold when the characteristic of $\kk$ is different from two. When $X$ is a projective variety, the automorphism group of $X$ is the group of $\kk$-rational points of a group scheme (see \cite{Matsumura_Oort}), and we only consider its reduced structure.

The study of algebraic subgroups of the group of birational transformations started with \cite{Enriques}, where the author classified the maximal connected algebraic subgroups of $\operatorname{Bir}(\PP^2)$. More recently, the maximal algebraic subgroups of $\operatorname{Bir}(\PP^2)$ have been classified; see \cite{Blanc}. The purpose of this text is to study the algebraic subgroups of $\operatorname{Bir}(C\times \PP^1)$ when $g\geq 1$, which will complete the classification for surfaces of Kodaira dimension $-\infty$. 

Let $G$ be an algebraic subgroup of $\operatorname{Bir}(C\times \PP^1)$. The strategy is classical: first regularize the action of~$G$, find a $G$-equivariant completion, and run a $G$-equivariant minimal model program (MMP) to embed $G$ in the automorphism group of a $G$-minimal fibration. The equivariant completion from Sumihiro \cite{Sumihiro1,Sumihiro2} works for linear algebraic groups; therefore, his results cannot be applied in our setting. Recently, Brion proved the existence of an equivariant completion for connected (not necessarily linear) algebraic groups acting birationally on integral varieties; see \cite[Corollary 3]{Brion_alggpactions}. Using his results, we find an equivariant completion for (not necessarily linear or connected) algebraic groups acting on surfaces (see Proposition~\ref{regularization}). Then we re-prove the $G$-equivariant MMP (Proposition~\ref{Iskovskikh}), which is a folklore result (see \textit{e.g.}\ \cite[Example 2.18]{KollarMori}), by using only elementary arguments. We are left with studying the automorphism groups of conic bundles. Following the ideas of \cite{Blanc}, we prove Propositions~\ref{reduction} and~\ref{key}, which reduce the study to the cases of ruled surfaces, exceptional conic bundles and $(\mathbb{Z}/2\mathbb{Z})^2$-conic bundles (see Section~\ref{definitionsconic} for definitions). The Segre invariant $\seg(X)$ of a ruled surface $X$ (see Definition~\ref{defSegre}) was introduced in \cite{Maruyama2} and \cite{Maruyama} for the classification of ruled surfaces and their automorphisms. The ruled surfaces $\mathcal{A}_0$ and $\mathcal{A}_1$ in Theorem~\ref{A}\eqref{A-4}, \eqref{A-5} are the only indecomposable $\PP^1$-bundles over $C$ up to $C$-isomorphism, when $C$ is an elliptic curve (see Definition~\ref{defdecomposable} and \cite[Theorem 11]{Atiyah} or \cite[Theorem V.2.15]{Hartshorne}). Combining techniques from Blanc and results of Maruyama, we prove the following theorem. 

\begin{theoremA}\label{A}
	Let $\kk$ be an algebraically closed field of characteristic different from two, and let $C$ be a smooth projective curve of genus $g\geq 1$. The following algebraic subgroups of $\operatorname{Bir}(C\times \PP^1)$ are maximal:
	\begin{enumerate}
		\item\label{A-1} $\Aut(C\times \PP^1)\simeq \Aut(C)\times  \operatorname{PGL}(2,\kk)$; 
		\item\label{A-2} $\Aut(X)$, where $X$ is an exceptional conic bundle over $C$ which is the blowup of a decomposable ruled surface $\pi\colon \PP(\mathcal{O}_C(D)\oplus \mathcal{O}_C)\to C$ along $F=\{p_1,p_2,\ldots,p_{2\deg(D)}\}$ lying in two disjoint sections $s_1$ and $s_2$ of $\pi$, and such that $-2D$ is linearly equivalent to
		$$
		\sum_{p\in s_1\cap F} \pi(p) - \sum_{p\in s_2\cap F} \pi(p);
		$$
		$($Then $\Aut(X)$ fits into an exact sequence
		$$ 1 \lra \mathbb{G}_m\rtimes \mathbb{Z}/2\mathbb{Z} \lra \Aut(X) \lra H,$$
		where $H$ is the finite subgroup of $\Aut(C)$ preserving the image of the singular fibres.$)$

	      \item\label{A-3} $\Aut(X)$, where $X$ is a $(\mathbb{Z}/2\mathbb{Z})^2$-conic bundle with at least one singular fibre;

                \noindent$($Then $\Aut(X)$ fits into an exact sequence
		$$ 1 \lra (\mathbb{Z}/2\mathbb{Z})^2 \lra \Aut(X) \lra H,$$
		where $H$ is the finite subgroup of $\Aut(C)$ preserving the image of the singular fibres.$)$
	     
 \item\label{A-4} $\Aut(X)$, where $X$ is a $(\mathbb{Z}/2\mathbb{Z})^2$-ruled surface $($consequently, $\seg(X)>0)$;

                \noindent $($Then $\Aut(X)$ fits into an exact sequence
		$$ 1 \lra (\mathbb{Z}/2\mathbb{Z})^2 \lra \Aut(X) \lra \Aut(C).$$
		Moreover, if $g=1$, there exists a unique $(\mathbb{Z}/2\mathbb{Z})^2$-ruled surface over $C$ denoted by $\mathcal{A}_1$ which satisfies $\seg(\mathcal{A}_1)=1$ and for which $\Aut(\mathcal{A}_1)$ fits into an exact sequence
		$$ 1 \lra (\mathbb{Z}/2\mathbb{Z})^2 \lra \Aut(\mathcal{A}_1) \lra \Aut(C) \to 1.)$$

	      \item\label{A-5} $\Aut(\mathcal{A}_0)$, where $\mathcal{A}_0$ is the unique indecomposable ruled surface over $C$ with Segre invariant $0$ when $g=1$;

                \noindent $($Then there exists an exact sequence
		$$ 1 \lra \mathbb{G}_a \lra \Aut(\mathcal{A}_0) \lra \Aut(C)\lra 1.)$$
	      \item\label{A-6}  $\Aut(X)$, where $X\simeq \PP(\mathcal{O}_C(D)\oplus \mathcal{O}_C)$ is a non-trivial decomposable ruled surface over $C$ with $\deg(D)=0$ $($or, equivalently, $\seg(X)=0)$, with the additional assumption that $2D$ is principal if $g\geq 2$.

                \noindent$($Then $\Aut(X)$ fits into an exact sequence
		$$ 1 \lra G \lra \Aut(X) \lra \Aut(C),$$
		where $G=\mathbb{G}_m\rtimes \mathbb{Z}/2\mathbb{Z}$ if\, $2D$ is principal, else $G=\mathbb{G}_m$.$)$
	\end{enumerate}
	Moreover, any maximal algebraic subgroup of\, $\operatorname{Bir}(C\times \PP^1)$ is conjugate to one in the list above.
\end{theoremA}

By Corollary~\ref{exceptionalnotmax}, there exist exceptional conic bundles $X\to C$, where $C$ is a curve of positive genus, such that $\Aut(X)$ is not a maximal algebraic subgroup of $\Bir(C\times \PP^1)$. This does not happen when the base curve is rational: the automorphism group of an exceptional conic bundle over $\PP^1$ is always maximal (if the number of singular fibres is at least four, see \cite[Theorem 1(2)]{Blanc}; else the number of singular fibres equals two, and the result follows from \cite[Theorem 2(3)]{Blanc}). Moreover, the cases \eqref{A-4}, \eqref{A-5} and \eqref{A-6} of Theorem~\ref{A} do not exist when the base curve is rational: the Segre invariant of a ruled surface $\pi\colon S \to \PP^1$ is always non-positive (see \cite{HM} and \cite[Proposition 2.18(1)]{fong}) and equals $0$ if and only if $\pi$ is trivial.

From the classification of Blanc, it follows that every algebraic subgroup of $\operatorname{Bir}(\PP^2)$ is conjugate to a subgroup of a maximal one. This does not hold anymore for algebraic subgroups of $\operatorname{Bir}(C\times \PP^1)$ when $C$ has positive genus. The following corollary is an analogue of \cite[Theorem C]{fong} for surfaces of Kodaira dimension $-\infty$.

\begin{corollaryA}\label{D}
	Let $\kk$ be an algebraically closed field of characteristic different from two, and let $X$ be a surface of Kodaira dimension $-\infty$. Then, every algebraic subgroup of $\operatorname{Bir}(X)$ is contained in a maximal one if and only if $X$ is rational.
\end{corollaryA}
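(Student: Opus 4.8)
The plan is to treat the two implications separately, reducing each to the two possible birational classes of a surface of Kodaira dimension $-\infty$. A smooth projective model of a surface $X$ with $\kappa(X)=-\infty$ is either rational or birational to $C\times\PP^1$ for a uniquely determined smooth projective curve $C$ of genus $g\geq 1$ (the base of its ruling, equivalently its Albanese curve); since the property ``every algebraic subgroup of $\operatorname{Bir}(X)$ is contained in a maximal one'' depends only on the group $\operatorname{Bir}(X)$, hence only on the birational class of $X$, it suffices to check it for $X=\PP^2$ and for $X=C\times\PP^1$ with $g\geq 1$. The first case gives the ``if'' direction: here $\operatorname{Bir}(X)\simeq\operatorname{Bir}(\PP^2)$, and as already recalled in the introduction, Blanc's classification of the maximal algebraic subgroups of $\operatorname{Bir}(\PP^2)$ (\cite{Blanc}) implies that every algebraic subgroup of $\operatorname{Bir}(\PP^2)$ is contained in one of them (regularize the action, take an equivariant completion, run the equivariant MMP as in Propositions \ref{regularization} and \ref{Iskovskikh} to reach a del Pezzo surface or a conic bundle, each of which is contained in a group on Blanc's list).

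For the ``only if'' direction it remains to produce, when $g\geq 1$, an algebraic subgroup of $\operatorname{Bir}(C\times\PP^1)$ lying in no maximal one. By Corollary \ref{exceptionalnotmax} there is an exceptional conic bundle $\pi\colon S\to C$ with $\Aut(S)$ not maximal; it suffices to check that $\Aut(S)$ is in fact contained in no maximal algebraic subgroup. Suppose $\Aut(S)\leq G$ with $G$ maximal. By Theorem \ref{A}, $G$ is conjugate to $\Aut(Y)$ for $Y$ in one of the six families listed there, and the inclusion $\Aut(S)\hookrightarrow\Aut(Y)$ is a morphism of algebraic groups, so $\Aut(S)^{\circ}\simeq\mathbb{G}_m$ embeds into $\Aut(Y)^{\circ}$. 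This at once excludes families $(3)$, $(4)$, $(5)$: for a $(\mathbb{Z}/2\mathbb{Z})^2$-conic bundle $\Aut(Y)$ is finite; for a $(\mathbb{Z}/2\mathbb{Z})^2$-ruled surface $\Aut(Y)^{\circ}$ injects into $\Aut(C)^{\circ}$, which is trivial ($g\geq 2$) or an elliptic curve ($g=1$); and $\Aut(\mathcal{A}_0)^{\circ}$ is an extension of an elliptic curve by $\mathbb{G}_a$ — none of these contains $\mathbb{G}_m$. We are left with the possibilities $Y=C\times\PP^1$, $Y$ an exceptional conic bundle, or $Y$ a decomposable ruled surface of Segre invariant $0$.

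In each remaining case one uses that, since $g\geq 1$, the $\PP^1$-fibration $S\to C$ is the unique conic bundle structure on $S$ (and likewise on $Y$), so the conjugating birational map $S\dashrightarrow Y$ is compatible with the two fibrations; after composing with the induced automorphism of $C$ it becomes a birational map over $C$, hence an $\Aut(S)$-equivariant composition of elementary transformations over $C$ and of blow-ups and contractions of $(-1)$-curves contained in fibres. As $\Aut(S)^{\circ}\simeq\mathbb{G}_m$ acts on $S$ with fixed locus the two disjoint sections $s_1\cup s_2$, interchanged by the involution of $\mathbb{G}_m\rtimes\mathbb{Z}/2\mathbb{Z}\leq\Aut(S)$, the centre of each such elementary transformation or contraction must lie on $s_1\cup s_2$ or on a component of a singular fibre, and the whole chain must respect the surjection $\Aut(S)\to H$ onto the finite subgroup of $\Aut(C)$ stabilising the images of the singular fibres. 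Tracking the effect of these equivariant modifications on the configuration of singular fibres and on the Segre invariant, one checks that the only $Y$ on the list reachable from $S$ in this way is $S$ itself; hence $G=\Aut(S)$, contradicting the non-maximality of $\Aut(S)$.

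\textbf{The main obstacle} is precisely this last verification — showing that the bad exceptional conic bundle $S$ admits no $\Aut(S)$-equivariant birational map over $C$ to $C\times\PP^1$, to a maximal exceptional conic bundle, or to a decomposable ruled surface with $\seg=0$. The subtle point is that $S$ is already ``as symmetric as possible'' for its automorphism group exactly because $g\geq 1$ forces $H$ to be a proper finite subgroup of $\Aut(C)$; this is what rules out the symmetric configurations of base points which, over $\PP^1$, would let one pass $(\mathbb{G}_m\rtimes\mathbb{Z}/2\mathbb{Z})$-equivariantly to a $\PP^1$-bundle — and which are responsible for the maximality of the automorphism groups of exceptional conic bundles over $\PP^1$. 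Making this rigorous, i.e.\ analysing the $(\mathbb{G}_m\rtimes\mathbb{Z}/2\mathbb{Z})$-equivariant birational maps out of $S$ over $C$, is the heart of the argument.
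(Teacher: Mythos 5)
Your ``if'' direction is fine and matches the paper's (quote Blanc's classification for $\operatorname{Bir}(\PP^2)$). But the ``only if'' direction has a genuine gap, and you acknowledge it yourself: the entire argument hinges on showing that a non-maximal exceptional conic bundle $S$ admits no $\Aut(S)$-equivariant birational map over $C$ to any surface on the list of Theorem~\ref{A}, and you leave exactly that verification undone, labelling it ``the main obstacle.'' This is not a routine check one can wave at: Lemma~\ref{exceptionalmaximal} only tells you that such an $\Aut(S)$ sits inside an infinite increasing chain, which by itself does not preclude $\Aut(S)$ from also being contained in some maximal subgroup along a completely different chain of equivariant modifications. Until the case analysis of $(\mathbb{G}_m\rtimes\mathbb{Z}/2\mathbb{Z})$-equivariant elementary transformations is actually carried out (tracking base points on the two $(-n)$-sections, the singular fibres, and the Segre invariant), you have not produced a witness group that provably lies in no maximal subgroup, so the ``only if'' implication is not established.

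The paper sidesteps all of this with a two-line dimension count, and you should too: by Theorem~\ref{A} every maximal algebraic subgroup of $\operatorname{Bir}(C\times\PP^1)$ has dimension at most four, while Remark~\ref{dimensionarbitrarylarge} (built on Lemma~\ref{infiniteseq}) exhibits connected algebraic subgroups $\Aut^\circ(S_n)$ of arbitrarily large dimension. Any such group of dimension greater than four cannot be contained in a group of dimension at most four, so it is contained in no maximal subgroup. The lesson is in the choice of witness: picking a \emph{high-dimensional} subgroup makes non-containment in a maximal one free, whereas your witness $\Aut(S)$ (of dimension one) forces you into the delicate equivariant birational geometry that the dimension argument is designed to avoid.
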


\begin{conventions}
	Unless stated otherwise, all varieties are smooth and projective, and $C$ is a smooth projective curve.
\end{conventions}

\subsection*{Acknowledgments}
	The author is thankful to J\'er\'emy Blanc, Michel Brion, Ronan Terpereau and Sokratis Zikas for helpful discussions and comments. The author is also grateful to the anonymous referees for their careful reading and interesting remarks.

\section{Preliminaries}

\subsection{Regularization and relative minimal fibrations}

\begin{definition}\label{definitionsconic}
	Let $C$ be a curve.
	\begin{enumerate}
		\item A \emph{ruled surface} over $C$ is a morphism $\pi\colon S\to C$ such that each fibre is isomorphic to $\PP^1$. 
		\item A \emph{conic bundle} over $C$ is a morphism $\kappa\colon X\to C$ such that all fibres are isomorphic to $\PP^1$, except finitely many (possibly $0$) which are called \emph{singular fibres} and are transverse unions of two $(-1)$-curves.
		\item A conic bundle $\kappa\colon X\to C$ is an \emph{exceptional conic bundle} over $C$ if there exists an $n\geq 1$ such that $\kappa$ has exactly $2n$ singular fibres and two sections of self-intersection $-n$. 
		\item If $\kappa\colon X \to C$ is a conic bundle, we denote by $\operatorname{Bir}_C(X)$ the subgroup of $\operatorname{Bir}(X)$ which consists of the elements $f\in \operatorname{Bir}(X)$ such that $\kappa f = \kappa$. We also define $\Aut_C(X)=\Aut(X) \cap \operatorname{Bir}_C(X)$.
		\item A conic bundle $\kappa\colon X\to C$ is a $(\mathbb{Z}/2\mathbb{Z})^2$-\emph{conic bundle} over $C$ if $\Aut_C(X)\simeq (\mathbb{Z}/2\mathbb{Z})^2$ and each non-trivial involution in this group fixes pointwise an irreducible curve, which is a $2$-to-$1$ cover of $C$ ramified above an even positive number of points.
		\item A $(\mathbb{Z}/2\mathbb{Z})^2$-\emph{ruled surface} over $C$ is a ruled surface over $C$ which is also a $(\mathbb{Z}/2\mathbb{Z})^2$-conic bundle over~$C$. 
	\end{enumerate}
\end{definition}

\begin{remark}
	Assume that $C$ has positive genus, and let $\pi \colon X\to C$ be a conic bundle. Let $f$ be a smooth fibre of $\pi$ and $\alpha\in \Aut(X)$. Since $(\pi \alpha)_{|f}\colon f\simeq \PP^1 \to C$ is constant, it follows that $\alpha(f)$ is also a smooth fibre of $\pi$. The set of singular fibres is preserved by $\Aut(X)$, and $\pi$ induces a morphism of group schemes $\pi_*\colon \Aut(X)\to \Aut(C)$. This implies that every automorphism of $X$ preserves the conic bundle structure.
\end{remark}

\begin{definition}\label{defequivariantminimal}
	Let $X$ be a surface and $G$ be an algebraic subgroup of $\Aut(X)$. 
	\begin{enumerate}
		\item A birational map $\phi\colon X\dashrightarrow Y$ is $G$-\emph{equivariant} if $\phi G\phi^{-1}\subset \Aut(Y)$.
		\item The pair $(G,X)$ is \emph{minimal} if every $G$-equivariant birational morphism $X\to X'$, where $X'$ is a surface, is an isomorphism.
	\end{enumerate}
\end{definition}

The classical approach to study algebraic subgroups uses the regularization theorem of Weil \cite{Weil} (see also \cite{Zaitsev} or \cite{Kraft} for modern proofs). By \cite[Theorem 1]{Brion_models}, the regularization of $X$ contains a $G$-stable dense open subset $U$ which is smooth and quasi-projective. Then by \cite[Theorem 2]{Brion_models}, $U$ admits a $G$-equivariant completion by a normal projective $G$-variety, that we can assume smooth by a $G$-equivariant desingularization (see \cite{Lipman}). 

In the following lemma and proposition, we give an elementary proof of the existence of an equivariant completion for surfaces equipped with the action of an algebraic group $G$, not necessarily connected or linear, without using results of \cite{Brion_models}. 

\begin{lemma}\label{resolutionindeterminacies}
	Let $X$ be a surface and $G$ be an algebraic subgroup of\, $\operatorname{Bir}(X)$ such that $G^\circ$ acts regularly on $X$. Denote by $\operatorname{Bs}(X)$ the set of base points of the $G$-action, including the infinitely near ones. Then $\operatorname{Bs}(X)$ is finite, and the action of\, $G$ lifts to a regular action on the blowup of\, $X$ at $\operatorname{Bs}(X)$.
\end{lemma}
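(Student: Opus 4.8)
The plan is to show two things: first, that $\operatorname{Bs}(X)$ is a finite set, and second, that after blowing up all these points (including infinitely near ones) the $G$-action becomes regular. The key structural input is that $G^\circ$ already acts regularly on $X$, so the potential indeterminacy comes only from the finitely many components of $G$ modulo $G^\circ$, i.e.\ from the finite group $G/G^\circ$.

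**Step 1: Reduce to a single birational map and prove finiteness of $\operatorname{Bs}(X)$.**

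First I would pick coset representatives $g_1,\dots,g_n$ of $G^\circ$ in $G$, so that $G = \bigsqcup_i g_i G^\circ$. Each $g_i \in \operatorname{Bir}(X)$ is a birational self-map of the surface $X$, hence has a finite indeterminacy locus $\operatorname{Ind}(g_i)$ on $X$ itself (a birational map between smooth projective surfaces is defined outside finitely many points). Since $G^\circ$ acts by automorphisms, the base locus of the $G$-action on $X$ (the honest points on $X$, not yet the infinitely near ones) is contained in $\bigcup_i \operatorname{Ind}(g_i) \cup \operatorname{Ind}(g_i^{-1})$, which is finite; moreover, because $G^\circ$ is normal and acts regularly, this locus is $G$-stable (one checks $h \cdot \operatorname{Ind}(g) = \operatorname{Ind}(g h^{-1})$ up to the regular action, so the union over a coset is $G^\circ$-stable, and permuted by the $g_i$). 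Now resolving base points is an iterative process: blow up the (finitely many) base points lying on $X$, lift the action to the blowup $X_1$, and repeat. The content of the finiteness claim is that this terminates. I would argue this via the standard fact that resolving the indeterminacy of a fixed birational map $g_i \colon X \dashrightarrow X$ requires only finitely many blowups (e.g.\ by the decrease of some discrete invariant such as the number of base points counted with the length/degree bound coming from a resolution $X' \to X$, $X' \to X$ of $g_i$); doing this simultaneously for the finitely many $g_i$ and their inverses, and taking the common refinement, gives a single surface $\widetilde X \to X$ obtained by finitely many blowups on which all the $g_i$ (hence all of $G$) act regularly. This shows $\operatorname{Bs}(X)$ is finite.

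**Step 2: Equivariance of the blowup and regularity of the lifted action.**

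The remaining point is that the surface $\widetilde X$ is canonically the blowup of $X$ at $\operatorname{Bs}(X)$ and that $G$ acts regularly on it. Because $\operatorname{Bs}(X)$ is $G$-stable as a set of points (including infinitely near ones — here one uses that an automorphism of a blowup $X_1 \to X$ that descends to an automorphism of $X$ permutes the points infinitely near to the permuted centers), the universal property of blowing up a $G$-stable closed subscheme gives a canonical lift of the $G$-action to $\widetilde X$ as birational maps; and by construction of $\operatorname{Bs}(X)$ as the \emph{full} base locus, none of the coset representatives $g_i$ has any remaining indeterminacy on $\widetilde X$, so the lifted maps are morphisms, i.e.\ automorphisms. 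Finally I would check that the resulting map $G \to \operatorname{Aut}(\widetilde X)$ is a morphism of group schemes (so that the action is algebraic, not just set-theoretic): on the identity component this is inherited from the regular $G^\circ$-action on $X$ via the universal property applied over a base, and the finitely many other components contribute only translates, so algebraicity is automatic.

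**Main obstacle.**

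The technical heart is Step 1, specifically the \emph{termination} of the base-point resolution — i.e.\ genuinely proving $\operatorname{Bs}(X)$ is finite rather than merely that each finite stage is finite. The cleanest route is to fix, for each coset representative $g_i$, a minimal resolution $Z_i \to X$ of its graph with $Z_i \to X$ a composition of blowups, and observe that any sequence of blowups resolving the indeterminacy of $g_i$ is dominated by $Z_i$; hence the process stabilizes after at most the number of blowups in $Z_i \to X$. Handling infinitely near base points carefully — tracking that the $g_i$-indeterminacy on an intermediate blowup lies above $\operatorname{Bs}(X) \cap X$ and that the bound from $Z_i$ still applies — is the part that requires care, but it is bookkeeping rather than a conceptual difficulty, since everything is controlled by the fixed finite collection $\{Z_i\}$.
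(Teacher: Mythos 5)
Your first step — reducing to finitely many coset representatives $g_1,\dots,g_n$ and deducing that $\operatorname{Bs}(X)=\bigcup_i\operatorname{Bs}(g_i)$ is finite because $G^\circ$ is normal and acts regularly (so $\operatorname{Bs}(gg_i)=\operatorname{Bs}(g_i)$ for $g\in G^\circ$) — is exactly the paper's opening move and is correct. The gap is in the second half. Resolving the indeterminacy of each $g_i$ by dominating its graph $Z_i$ only produces a blowup $\eta\colon\widetilde X\to X$ on which each composite $g_i\circ\eta\colon\widetilde X\to X$ is a \emph{morphism to $X$}; it does not show that $\eta^{-1}g_i\eta$ is an automorphism of $\widetilde X$. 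The conjugate can acquire \emph{new} base points because $\eta^{-1}$ is undefined at the blown-up points of the \emph{target} copy of $X$: concretely, if $g_i$ is regular at a point $q\notin\operatorname{Bs}(g_i)$ but sends $q$ to a point $p\in\operatorname{Bs}(X)$ (say a proper base point of some other representative $g_j$), then $\eta^{-1}g_i\eta$ has a base point at $q$ even though no individual $g_i$ forced $q$ into your list. Your sentence ``by construction of $\operatorname{Bs}(X)$ as the full base locus, none of the coset representatives has any remaining indeterminacy on $\widetilde X$'' is precisely the assertion to be proved, not a consequence of the definition, and the domination-by-$Z_i$ bound in your ``main obstacle'' paragraph addresses termination of resolving each $g_i$ separately rather than this phenomenon.

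The paper closes exactly this gap: it shows that blowing up one proper base point $p$ induces a bijection $\operatorname{Bs}(X_p)\simeq\operatorname{Bs}(X)\setminus\{p\}$, the nontrivial direction being that no new base points are created. In the problematic case above, the group structure is used in an essential way: the point $q$ is a base point of the \emph{composite} $g_jg_i\in G$, and since $\operatorname{Bs}(g_jg_i)$ equals $\operatorname{Bs}(g_k)$ for the coset representative $g_k$ of $g_jg_i G^\circ$, the point $q$ already lies in $\bigcup_i\operatorname{Bs}(g_i)=\operatorname{Bs}(X)$ and is therefore blown up too. In other words, the cluster $\operatorname{Bs}(X)$ is automatically saturated under taking preimages of base points along other group elements, because $G$ is closed under composition; the paper verifies this with a $(-1)$-curve analysis on a resolution of $\eta^{-1}g_i\eta$, distinguishing whether the relevant contracted curve maps to a curve of $X$ or to the exceptional divisor of $\eta$. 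This composition argument is the missing idea in your proposal — and once it is in place, termination is immediate (the finite set $\operatorname{Bs}$ strictly decreases at each blowup), so the difficulty you flag as the ``technical heart'' is not where the real work lies.
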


\begin{proof}
	The set $G/G^\circ$ is finite; we can write $G=G_0 \sqcup G_1 \sqcup \cdots \sqcup G_n$, where $G^\circ = G_0$ and each $G_i$ is a connected component of $G$. For each $i$, we fix $g_i\in G_i$ and get that $g_iG^\circ = G_i$. Let $\operatorname{Bs}(g_i)$ be the set of base points of $g_i$, including the infinitely near ones; this set is finite because $X$ is smooth and projective. The subgroup $G^\circ \subset G$ is normal; it follows that every element $g_i'\in G_i$ equals $gg_i$ for some $g\in G^\circ$. Since $G^\circ$ acts regularly, $\operatorname{Bs}(G_i)= \operatorname{Bs}(g_i)$ for each $i$, and this implies that $\operatorname{Bs}(X)= \bigcup_{i=1,\ldots, n} \operatorname{Bs}(g_i)$ is also finite. Moreover, for each $g\in G^\circ$, there exists a $\tilde{g}\in G^\circ$ such that $g_ig = \tilde{g}g_i$. Then $g^{-1}(\operatorname{Bs}(g_i))=\operatorname{Bs}(g_i)$, so $G^\circ$ preserves $\operatorname{Bs}(X)$. Since $G^\circ$ is connected and $\operatorname{Bs}(X)$ is finite, this implies that $G^\circ$ acts trivially on $\operatorname{Bs}(X)$.
	
	 If $\operatorname{Bs}(X)$ is empty, the result holds. Suppose that $\operatorname{Bs}(X)\neq \emptyset$. Let $p\in \operatorname{Bs}(X)\cap X$ be a proper base point and $\eta\colon X_p \to  X$ be the blowup of $X$ at $p$. We consider the action of $G$ on $X_p$ obtained by conjugation. As $p$ is fixed by $G^\circ$, the algebraic group $G^\circ$ still acts regularly on $X_p$. We prove that each element $q\in\operatorname{Bs}(X_p)$ corresponds via $\eta$ to an element of $\operatorname{Bs}(X)$. Let $q\in \operatorname{Bs}(X_p)$; there exist a surface $Y$ such that $q\in Y$ and a  birational morphism $\pi \colon Y\to X_p$ such that $q$ is a base point of $\eta^{-1} g_i \eta \pi$. Let $W$ be a smooth projective surface with birational morphisms $\alpha\colon W\to Y$ and $\beta \colon W\to X_p$ such that $\beta \alpha^{-1}$ is a minimal resolution of $\eta^{-1} g_i \eta \pi$. The following diagram is commutative:
	\[
	\begin{tikzcd} [column sep=3em,row sep = 2em,/tikz/row 1/.style={row sep=0.1em},/tikz/column 1/.style={column sep=0em}]
		&& W \arrow[ld, "\alpha"] \arrow[rdd,"\beta",swap]& \\
		q\in& Y\arrow[rd,"\pi"]\arrow[rdd,"\eta\pi",swap] & & \\
		&& X_p \arrow[d,"\eta"]\arrow[r,dashed,"\eta^{-1}g_i\eta",swap]& X_p \arrow[d,"\eta"] \\
		&& X \arrow[r,dashed,"g_i",swap]& X\rlap{.}
	\end{tikzcd}
	\] 
	Since $q$ is a base point of $\eta^{-1} g_i \eta \pi$, it must be blown up by $\alpha$. There exists a $(-1)$-curve $\widetilde{C}$ in $W$ contracted by $\alpha$ to $q$ and such that its image by $\beta$ is a curve $C$ in $X_p$. If the image of $C$ by $\eta$ is a curve in $X$, then $q$ is a base point of $g_i$. Else $C$ is contracted by $\eta$; \textit{i.e.}\ $C$ is the exceptional divisor of $\eta$. As $C^2=\widetilde{C}^2=-1$, the morphism $\beta$ is an isomorphism $\widetilde{U}\to U$, where $\widetilde{U}\subset W$ and $U\subset X_p$ are open subsets containing $\widetilde{C}$ and $C$. Let $j$ be such that $p$ is a proper base point of $g_j$. Let $W'$ be a smooth projective surface with birational morphisms $\alpha' \colon W'\to X_p$ and $\beta'\colon W'\to X$ such that $\beta' \alpha'^{-1}\eta^{-1}$ is a minimal resolution of $g_j$. We obtain the following commutative diagram:
	\[
	\begin{tikzcd} [column sep=3em,row sep = 2em,/tikz/row 1/.style={row sep=0.1em},/tikz/column 1/.style={column sep=0em}]
		&& W \arrow[ld, "\alpha"] \arrow[rdd,"\beta",swap] & W' \arrow[dd,"\alpha' "] \arrow[rddd,"\beta'"]\\
		q\in& Y\arrow[rd,"\pi"]\arrow[rdd,"\eta\pi",swap] & & \\
		&& X_p \arrow[d,"\eta"]\arrow[r,dashed,"\eta^{-1}g_i\eta",swap]& X_p \arrow[d,"\eta"] \\
		&& X \arrow[r,dashed,"g_i",swap]& X \arrow[r,dashed,"g_j",swap] & X\rlap{.}
	\end{tikzcd}
	\] 
	There exists a $(-1)$-curve $C'$ in $W'$ contracted to $p$ by $\eta\alpha'$ and such that its image by $\beta'$ is a curve in~$X$. The image of $C'$ by $\alpha'$ is $C$ or a point of $C$. Since $\beta\colon \widetilde{U}\to U$ is an isomorphism, this implies that $\beta^{-1}\alpha'\colon W' \dashrightarrow W$ is defined on a neighbourhood of $C'$ and sends $C'$ onto either $\tilde{C}$ or a point of $\tilde{C}$. Hence $\alpha\beta^{-1}\alpha'\colon W' \dashrightarrow Y$ contracts $C'$ onto $q$, so $q\in \operatorname{Bs}(g_jg_i)$.
	
	Let $\operatorname{Bs}(X_p)$ be the set of base points of $\eta^{-1} G \eta$, including the infinitely near ones. We have shown that if $q\in \operatorname{Bs}(X_p)$, then $q\in \operatorname{Bs}(X)$. The map $\operatorname{Bs}(X_p)\to \operatorname{Bs}(X)\setminus \{p\}$ sending the infinitesimal base point $(q,\pi)$ to $(q,\eta\pi)$ is injective. Conversely, if $q\in \operatorname{Bs}(g_i)$ and $q\neq p$, then $\eta^{-1}(q)\in \operatorname{Bs}(\eta^{-1}g_i\eta)$. Therefore, $\operatorname{Bs}(X_p)\simeq \operatorname{Bs}(X)\setminus \{p\}$. Proceeding by induction, the blowup of all elements of $\operatorname{Bs}$ gives rise to a surface on which $G$ acts regularly.
\end{proof}

\begin{proposition}\label{regularization}
	Let $X$ be a surface and $G$ be an algebraic subgroup of\, $\operatorname{Bir}(X)$. Then there exists a smooth projective surface $Y$ with a birational map $\psi\colon X\dashrightarrow Y$ such that $\psi G\psi^{-1}$ is an algebraic subgroup of $\Aut(Y)$.
\end{proposition}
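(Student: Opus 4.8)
The plan is to treat the connected and the finite parts of $G$ separately: first I would regularize the action of the identity component $G^\circ$ using Brion's theorem, and then clear the remaining base points of the full group $G$ by means of Lemma \ref{resolutionindeterminacies}.

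For the first step, note that $G^\circ$ is a connected algebraic group acting birationally on the integral variety $X$; hence \cite[Corollary 3]{Brion_alggpactions} yields a normal projective variety $X_1$ together with a birational map $\phi\colon X\dashrightarrow X_1$ such that $\phi G^\circ\phi^{-1}$ is a subgroup of $\Aut(X_1)$ acting regularly. As $\phi$ is birational and $X$ is a surface, $X_1$ is a normal projective surface, and after a $G^\circ$-equivariant resolution of singularities — e.g.\ the minimal resolution, which is canonical and hence inherits a regular $G^\circ$-action (see \cite{Lipman}) — I may assume $X_1$ to be smooth, with $G^\circ$ still acting regularly. Transporting the algebraic group structure of $G$ along $\phi$, the subgroup $G_1:=\phi G\phi^{-1}$ becomes an algebraic subgroup of $\operatorname{Bir}(X_1)$ with $G_1^\circ=\phi G^\circ\phi^{-1}$ acting regularly on the smooth projective surface $X_1$.

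For the second step, I would apply Lemma \ref{resolutionindeterminacies} to the pair $(G_1,X_1)$: the set $\operatorname{Bs}(X_1)$ of base points of the $G_1$-action, infinitely near points included, is finite, and $G_1$ lifts to a regular action on the blowup $\rho\colon Y\to X_1$ of $X_1$ at $\operatorname{Bs}(X_1)$. The surface $Y$ is smooth and projective, the composition $\psi:=\rho^{-1}\phi\colon X\dashrightarrow Y$ is birational, and $\psi G\psi^{-1}$ acts regularly on $Y$. To conclude, I use that a faithful regular action of the algebraic group $G$ on the projective variety $Y$ corresponds to a monomorphism of group schemes $G\hookrightarrow\Aut(Y)$ (recall that $\Aut(Y)$ is the group scheme of \cite{Matsumura_Oort}), which is then a closed immersion; thus $\psi G\psi^{-1}$ is an algebraic subgroup of $\Aut(Y)$, as required.

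The bulk of the work is already contained in Lemma \ref{resolutionindeterminacies}; the only genuinely new point is the reduction of the non-connected case to the connected one. The step that needs the most care is the first: I must make sure that Brion's regularization of $G^\circ$, followed by the resolution of singularities, delivers a \emph{smooth} projective surface on which $G^\circ$ \emph{still} acts regularly, so that the hypotheses of Lemma \ref{resolutionindeterminacies} are met — this is precisely where the canonicity of the resolution of surface singularities is used. The passage from ``acting regularly'' to ``being an algebraic subgroup of $\Aut(Y)$'' in the last step is routine, given that $\Aut(Y)$ represents the automorphism functor.
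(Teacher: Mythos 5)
Your proposal is correct and follows essentially the same route as the paper: regularize $G^\circ$ via \cite[Corollary 3]{Brion_alggpactions}, pass to a smooth model by an equivariant desingularization, and then apply Lemma \ref{resolutionindeterminacies} to lift the action of the full group $G$ to the blowup of the (finite) base locus. The extra remarks on the canonicity of the minimal resolution and on the passage from a regular action to an algebraic subgroup of $\Aut(Y)$ are accurate elaborations of steps the paper treats more briefly.
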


\begin{proof}
	Apply \cite[Corollary 3]{Brion_alggpactions} to $X$ equipped with the action of the connected component of the identity $G^\circ\subset G$. There exists a normal projective surface $Z$ with a birational map $\phi\colon X\dashrightarrow Z$ such that $\phi G^\circ \phi^{-1} \subset \Autzero(Z)$. By an equivariant desingularization, we can also assume that $Z$ is smooth; see \cite{Lipman}. Let $H = \phi G \phi^{-1}$ and $\eta\colon Y\to Z$ be the blowup of $Z$ at $\operatorname{Bs}(Z)$. By Lemma~\ref{resolutionindeterminacies}, the action of $H$ lifts to a regular action on $Y$. Then $\eta^{-1} H \eta \subset \Aut(Y)$ is a closed subgroup which is an algebraic subgroup of $\operatorname{Bir}(Y)$. Take $\psi=\eta^{-1}\phi$; we get that $\psi G \psi^{-1}$ is an algebraic subgroup of $\Aut(Y)$. 
\end{proof}

The next result is also known, see \textit{e.g.}~\cite[Example 2.18]{KollarMori}; we re-prove it in our specific situation using elementary arguments.

\begin{proposition}\label{Iskovskikh}
	Let $C$ be a curve of positive genus, and let $X$ be a surface birationally equivalent to $C\times \PP^1$. Let $G$ be an algebraic subgroup of $\Aut(X)$. If $(G,X)$ is minimal $($see Definition~\ref{defequivariantminimal}\,$)$, then $X$ is a conic bundle over $C$.
\end{proposition}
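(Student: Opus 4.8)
The plan is to run a $G$-equivariant minimal model program starting from $X$ and identify the output. Since $X$ is birational to $C\times\PP^1$, it is a rational surface when $g=0$ (excluded here) and otherwise has Kodaira dimension $-\infty$ with irregularity $\geq 1$; in particular $X$ is not minimal as an abstract surface only if it is a $\PP^1$-bundle or $\PP^2$, but the latter is impossible since $C$ has positive genus. The first step is to observe that $X$ admits a morphism to $C$: the Albanese map (or Stein factorization of the map to $\mathrm{Alb}(X)$, which for a surface birational to $C\times\PP^1$ is $\mathrm{Jac}(C)$ up to isogeny) produces a fibration $\rho\colon X\to C'$ with $C'$ birational to $C$, hence $C'\cong C$; the generic fibre is a smooth rational curve since $X$ is birationally ruled over $C$. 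This $\rho$ is $G$-equivariant because $\Aut(X)$ acts on $\mathrm{Alb}(X)$ compatibly, and since $C$ has positive genus the induced map $\Aut(X)\to\Aut(C)$ forces every element of $G$ to send fibres of $\rho$ to fibres of $\rho$, exactly as in the Remark after Definition \ref{definitionsconic}.

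The second step is to make this fibration relatively minimal. Run the $G$-equivariant MMP over $C$: any $(-1)$-curve in a fibre of $\rho$ can be $G$-equivariantly contracted provided its $G$-orbit consists of pairwise disjoint $(-1)$-curves, and because $G$ preserves the fibration with finite image in $\Aut(C)$, the orbit of a fibre-component is contained in finitely many fibres; within a single reducible fibre the $(-1)$-curves meeting the orbit can be contracted after reordering. More carefully, one takes a $G$-equivariant birational morphism $X\to X'$ over $C$ contracting a maximal such configuration; by minimality of $(G,X)$ this morphism must be an isomorphism, so already every fibre of $\rho$ contains no $(-1)$-curve that can be equivariantly contracted. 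The third step is then to analyze the fibres of $\rho\colon X\to C$: each closed fibre is a connected curve of arithmetic genus $0$, hence a tree of $\PP^1$'s, and a standard computation (each non-smooth fibre, being a $G$-orbit-stable $\PP^1$-tree of arithmetic genus zero with $X$ smooth, contains a $(-1)$-curve unless it is a transverse union of exactly two $(-1)$-curves) shows every fibre is either $\PP^1$ or such a pair; moreover there are only finitely many singular fibres since $\rho$ is smooth over a dense open of $C$. That is precisely the definition of a conic bundle over $C$.

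The main obstacle I expect is the equivariance bookkeeping in the contraction step: a single $(-1)$-curve $E$ in a fibre need not be $G$-stable, so one must contract its entire $G$-orbit at once, and for this to be a morphism of smooth surfaces the curves in the orbit must be mutually disjoint. The key point making this work is that $G^\circ$ preserves each fibre it meets (its image in $\Aut(C)$ is connected, and fixing one fibre forces fixing all) — no, more precisely $G^\circ$ acts on the finite set of singular fibres and hence fixes each of them, so $G^\circ$ permutes the finitely many $(-1)$-curves in each singular fibre through a finite quotient, i.e.\ trivially; thus the only genuine permutations come from $G/G^\circ$, which is finite, and a finite set of $(-1)$-curves in fibres of a relatively minimal-up-to-this-step surface that are not already contractible must be disjoint (two $(-1)$-curves in the same fibre meeting transversally would give, upon contracting neither, a fibre that is not a $\PP^1$-tree of the right shape). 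If a non-disjoint orbit occurred, contracting one curve of it would still be $G$-equivariant after we pass to the configuration-minimal model, contradicting minimality of $(G,X)$. This reduces the argument to the clean fibre-classification in the third step, which is the elementary computation the paper advertises.
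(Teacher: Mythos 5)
Your overall strategy is the same as the paper's: produce a $G$-equivariant fibration $\kappa\colon X\to C$ with rational fibres (your Albanese route and the paper's observation that a rational map from a smooth surface to a curve of positive genus is automatically a morphism both work), then use minimality of $(G,X)$ to constrain the fibres. The gap is at the decisive step, ruling out fibres with three or more components. Minimality only forbids the equivariant contraction of a $G$-orbit of \emph{pairwise disjoint} $(-1)$-curves; what it therefore gives you is that the $G$-orbit of any $(-1)$-curve contained in a fibre must contain two members that meet, and (since distinct fibres are disjoint) these two members lie in the \emph{same} fibre. Your proposal asserts that such a configuration ``must be disjoint'' or is ``not a $\PP^1$-tree of the right shape'' without proof, and the fallback claim that contracting one curve of a non-disjoint orbit ``would still be $G$-equivariant'' is false --- that is precisely the situation in which no equivariant contraction exists, and it genuinely occurs in the two-component singular fibre of a conic bundle whose components are swapped by $G$. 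So nothing in your argument actually excludes a $G$-minimal fibre with $n\geq 3$ components; the parenthetical ``contains a $(-1)$-curve unless it is a transverse union of two $(-1)$-curves'' also cannot be what you mean, since such a union contains two of them.

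What is missing is a concrete contradiction in the case $n\geq 3$ with two intersecting $(-1)$-curves $E_k,E_l$ of the fibre in one $G$-orbit. The paper extracts it from the birational morphism $X\to S$ onto a relatively minimal ruled surface: contracting $E_k$ turns the image of $E_l$ into a $0$-curve, which can no longer be contracted, and contracting further components of the fibre only raises its self-intersection, so the fibre can never become the $0$-curve it must be in $S$. Alternatively, writing the scheme-theoretic fibre as $\sum_i m_iE_i$ and using $\kappa^*(p)\cdot E_k=\kappa^*(p)\cdot E_l=0$ forces $m_k=m_l$ and forces $E_k,E_l$ to meet no other component, so $E_k\cup E_l$ would be a connected component of a connected fibre with $n\geq 3$ pieces, a contradiction; hence the orbit is disjoint after all and minimality is violated. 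One of these computations must appear; without it the proof is incomplete. The smooth-fibre and two-component cases, the equivariance of $\kappa$, and the finiteness of the set of singular fibres are handled correctly.
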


\begin{proof}
	Since $X$ is birational to $C\times \PP^1$, there exist a morphism $\kappa\colon X\to C$ and a birational map $\phi\colon C\times \PP^1\dashrightarrow X$ such that $\kappa\phi = p_1$, where $p_1\colon C\times \PP^1\to C$ denotes the projection onto the first factor. In particular, $\phi$ is a finite composite of blowups and contractions, and there exists a non-empty open $U\subset C$ such that $\phi_{|U\times \PP^1}$ is an isomorphism. Let $p\in C\setminus U$; it remains to see that $\kappa^{-1}(p)$ either is isomorphic to $\PP^1$ or is the transverse union of two $(-1)$-curves. Since $X$ is the blowup of a ruled surface $S$ in finitely many  (possibly infinitely close) points, we can write $\kappa^{-1}(p) = E_1\cup \cdots \cup E_n$, where: 
	\begin{itemize}[wide]
		\item Each $E_i$ is isomorphic to $\PP^1$.
		\item For all distinct $i,j$, $E_i$ and $E_j$ either intersect transversely at a point or are disjoint.
	\end{itemize}
	If $n= 1$, then $\kappa^{-1}(p)$ is a smooth fibre isomorphic to $\PP^1$. If $n=2$, then $E_1$ and $E_2$ intersect transversely in one point. Because there is a contraction to the ruled surface $S$, either $E_1$ or $E_2$ can be contracted. Therefore, $E_1^2=E_2^2=-1$. 
	
	Assume from now on that $n\geq 3$. First, $E_i^2 <0$ for all $i$. The contraction of any collection of disjoint $(-1)$-curves permuted transitively by $G$ is $G$-equivariant. Since $(G,X)$ is minimal, there exist $k,l\in \{1,\ldots,n\}$ with $k\neq l$ such that $E_k$ and $E_l$ are two $(-1)$-curves in the same $G$-orbit and $E_k\cap E_l\neq \emptyset$. The image of $E_l$ by the contraction of $E_k$ has self-intersection $0$, and in particular it cannot be contracted. By the assumption that $n\geq 3$, we can contract other $(-1)$-curves in $\kappa^{-1}(p)$, which increases the self-intersection. This contradicts the existence of a contraction of $X$ to the ruled surface $S$, where $f^2=0$ for any fibre $f$. Therefore, we must have $n\leq 2$.
\end{proof}	

Proposition~\ref{Iskovskikh} motivates the study of automorphism groups of conic bundles. The next lemma can be used as a  maximality criterion for their automorphism groups.

\begin{lemma}\label{conicequivariant}
	Let $C$ be a curve of positive genus. Let $\kappa\colon X\to C$ and $\kappa' \colon X'\to C$ be conic bundles. Let $G$ be an algebraic group acting on $X$ and $X'$ such that $(G,X)$ and $(G,X')$ are minimal, and let $\phi\colon X\dashrightarrow X'$ be a $G$-equivariant birational map which is not an isomorphism. Then $\phi=\phi_n \cdots \phi_1$, where each $\phi_j$ is the blowup of a finite $G$-orbit of a point which is contained in the complement of the singular fibres and does not contain two points on the same smooth fibre, followed by the contractions of the strict transforms of the fibres through the points of the $G$-orbit. In particular, $\kappa$ and $\kappa'$ have the same number of singular fibres.
\end{lemma}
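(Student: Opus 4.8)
The plan is to decompose $\phi$ via its minimal resolution and track how the conic bundle structure and minimality constrain each elementary step. Since $\phi\colon X\dashrightarrow X'$ is a birational map between smooth projective surfaces which is not an isomorphism, I would first take a minimal resolution $W$ with birational morphisms $p\colon W\to X$ and $q\colon W\to X'$, each a composition of blowups, with $q p^{-1}=\phi$. Because $C$ has positive genus, the rational map $\kappa\phi^{-1}\colon X'\dashrightarrow C$ extends (it agrees with $\kappa'$ on a dense open and $C$ admits no nonconstant maps from $\mathbb{P}^1$), so $\phi$ is fibrewise: $\kappa' \phi=\kappa$ as rational maps to $C$, i.e.\ $\phi\in\operatorname{Bir}_C$ in the obvious sense. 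Thus $p$ and $q$ blow up only points lying over points of $C$, and over a general point of $C$ the map $\phi$ is an isomorphism of $\mathbb{P}^1$'s. The composition $\Phi=\Aut$-conjugate of $G$ acts on $W$ (by conjugation through $p$, since $p$ is $G$-equivariant as the minimal resolution) and the blown-up/contracted configurations are $G$-stable; in particular the set of points blown up by $p$ (resp.\ $q$), including infinitely near ones, is a union of finite $G$-orbits.

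The core of the argument is a \emph{local analysis over a single point $c\in C$} where $\phi$ is not a local isomorphism. First I would argue that the base points of $\phi$ avoid the singular fibres: if $c$ were a point under a singular fibre $E_1\cup E_2$ of $\kappa$ (two $(-1)$-curves meeting transversally), any point of this fibre we blow up produces a chain over $c$ with a $(-1)$-curve whose image in $X'$ cannot be a fibre of a conic bundle unless things collapse back — more precisely, after the resolution and re-contraction down to $X'$, the fibre over $c$ in $X'$ would have to again be either $\mathbb{P}^1$ or a transverse union of two $(-1)$-curves, and a count of components via the blowup/contraction bookkeeping (each blowup adds one to the number of components of the fibre, each contraction in the descent to $X'$ removes one, and $q$ must leave at most two) forces that over such a $c$ no blowup occurs, since starting from two components we cannot end at $\le 2$ after a strictly positive number of blowups while contracting only $(-1)$-curves that are not components of the final fibre. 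Hence the base points of both $\phi$ and $\phi^{-1}$ lie in the open locus of smooth fibres. Second, over a point $c$ in the smooth locus, the fibre $f=\kappa^{-1}(c)\simeq\mathbb{P}^1$ has $f^2=0$; if we blow up a point $x\in f$, the strict transform $\tilde f$ becomes a $(-1)$-curve, and to return to a conic bundle over $X'$ we must contract it (it cannot be a component of the image fibre since that would force the other component to have self-intersection $0$). This shows each indeterminacy point of $\phi$ lies on a distinct smooth fibre (two points on the same $f$ would, after blowing both up, leave $\tilde f$ with self-intersection $-2$, uncontractible in the relevant sense, contradicting that $q$ contracts down to a conic bundle), and that the elementary step at $x$ is exactly: blow up $x$, then contract $\tilde f$ — an elementary transformation in the fibre.

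Putting the local pieces together over all the bad points of $C$, and using that the full set of base points is a union of finite $G$-orbits disjoint from the singular fibres with no two base points on one fibre, I would order the $G$-orbits $O_1,\dots,O_n$ and let $\phi_j$ be the blowup of $O_j$ followed by the contraction of the strict transforms of the fibres through $O_j$; each $\phi_j$ is $G$-equivariant by construction, lands in a smooth projective surface that is again a conic bundle over $C$ with an induced $G$-action, and $\phi=\phi_n\cdots\phi_1$. Finally, since each elementary transformation at a point on a smooth fibre replaces one smooth fibre by another smooth fibre and does not touch the singular fibres, the number of singular fibres is preserved, giving the last assertion; minimality of $(G,X)$ and $(G,X')$ is what rules out the degenerate possibility that some $\phi_j$ could instead be (or be followed by) a $G$-equivariant contraction, which is the only place the hypothesis is truly needed. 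The main obstacle I anticipate is the careful component-counting argument showing base points cannot sit over singular fibres — one must handle infinitely near base points and verify that the minimal resolution genuinely never introduces a configuration over a singular fibre that can be contracted back to a length-$\le 2$ fibre, which requires being precise about which $(-1)$-curves in $W$ are contracted by $q$ versus which survive as fibre components.
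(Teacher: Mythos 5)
Your overall architecture (minimal resolution, identification of the elementary steps, induction over $G$-orbits) matches the paper's, but the pivotal step --- that the base points of $\phi$ avoid the singular fibres --- is argued by a component count that gives no contradiction and cannot be repaired. Starting from a singular fibre with two components and blowing up $k>0$ points yields $2+k$ components in $W$, and $q$ may simply contract $k$ of them over the same point of $C$ to land back at two; your parenthetical restriction on which curves $q$ contracts adds no constraint. Worse, the claim you are trying to establish by counting is \emph{false} without the minimality hypotheses: if $E_1\cup E_2$ is a singular fibre of $\kappa$ over $c$ and $x\in E_1\setminus E_2$, then blowing up $x$ and contracting the strict transform of $E_2$ (still a $(-1)$-curve, disjoint from the exceptional curve of the blowup) produces a new conic bundle $X'$ whose fibre over $c$ is again a transverse union of two $(-1)$-curves; this birational map has its base point on a singular fibre. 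So the exclusion must come from the group action, not from arithmetic of fibre components, and relegating minimality to ``ruling out a stray equivariant contraction'' at the end misplaces where the hypothesis actually does its work.

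The paper's proof uses minimality exactly at this point. Take a $G$-orbit $E_1,\dots,E_m$ of $(-1)$-curves in the resolution contracted by $\eta'\colon Z\to X'$ and consider their images $\widetilde{E}_i$ in $X$, which lie in fibres of $\kappa$. If $\eta$ blew up no point of $\widetilde{E}_1\cup\dots\cup\widetilde{E}_m$, these images would form a $G$-orbit of $(-1)$-curves in $X$ whose $G$-equivariant contraction contradicts minimality of $(G,X)$ (this is what kills the example above). Hence each $\widetilde{E}_i$ carries at least one blown-up point and $\widetilde{E}_i^2\ge 0$; since an irreducible curve contained in a fibre of a conic bundle has self-intersection $0$ (a whole smooth fibre) or $-1$ (a component of a singular fibre), each $\widetilde{E}_i$ is a smooth fibre, so the blown-up orbit avoids the singular fibres, and the equality $\widetilde{E}_i^2=0=E_i^2+\#(\Omega\cap\widetilde{E}_i)$ forces exactly one blown-up point per fibre (your separate argument for this sub-step also leans on the unstated fact that a minimal resolution contracts no curve by both $p$ and $q$). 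Minimality of $(G,X')$ is then invoked to perform the contraction defining $\phi_1$ and one concludes by induction. Your proposal would be correct if the singular-fibre exclusion were rewritten along these lines; as written, it has a genuine gap.
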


\begin{proof}
	Take a minimal resolution of $\phi$, \textit{i.e.}\ a surface $Z$ with $G$-equivariant birational morphisms $\eta\colon Z \to X$ and $\eta'\colon Z\to X'$ satisfying $\eta' = \phi \eta$. Let $E_1,\ldots,E_m\subset Z$ be a $G$-orbit of $(-1)$-curves contracted by $\eta'$, and let $p_i=\kappa\eta(E_i)$. For each $i$, denote by $\widetilde{E}_i$ the image of $E_i$ by $\eta$, which is contained in the fibre $\kappa^{-1}(p_i)$. Since $(G,X)$ is minimal, $\eta$ must blow up a $G$-orbit of points $\Omega$ contained in $\widetilde{E}_1\cup \cdots\cup \widetilde{E}_m$. Hence, $\widetilde{E}_i^2\geq 0$ for all $i$. In particular, $\widetilde{E}_1\cup \cdots\cup \widetilde{E}_m$ is not contained in the set of singular fibres of $\kappa$m and $\widetilde{E}_i^2=0$ for each $i$.

	Then, $\widetilde{E}_1\cup \cdots\cup \widetilde{E}_m$ is contained in the complement of the singular fibres. As $\widetilde{E}_i^2=0$ and $E_i^2=-1$ for each~$i$, no distinct points of $\Omega$ lie in the same smooth fibre. Because $(G,X')$ is minimal, we can contract the strict transforms of the fibres, which yields a $G$-equivariant birational map $\phi_1\colon X\dashrightarrow X_1$ such that $\phi$ factorizes through $\phi_1$. By induction, we find $G$-equivariant birational maps $\phi_j\colon X_{j-1} \dashrightarrow X_j$ such that $\phi = \phi_n \cdots \phi_1$, where each $\phi_j$ is as we wanted.
	
	Finally, applying elementary transformations in the complement of the set of singular fibres does not change the number of singular fibres.  
\end{proof}

\subsection{Generalities on ruled surfaces and their automorphisms}

\begin{definition}\label{defdecomposable}
	A ruled surface $\pi$ is \emph{decomposable} if it admits two disjoint sections. Else, $\pi$ is \emph{indecomposable}.
\end{definition}

The following notion has been already used in \cite{Maruyama2,Maruyama} and more recently in \cite{fong}.

\begin{definition}\label{defSegre}
  The \emph{Segre invariant} $\seg(S)$ of a ruled surface $\pi\colon S \to C$ is the integer $ \min \{\sigma^2\mid \sigma \text{ section of }\pi\}$.
  A section $\sigma$ of $\pi$ such that $\sigma^2 = \seg(S)$ is called a \emph{minimal section}.
\end{definition}

\begin{lemma}\label{segrezero}
	Let $\pi\colon S\to C$ be a decomposable ruled surface with $\seg(S)=0$. If two sections are disjoint, then they are both minimal sections.
\end{lemma}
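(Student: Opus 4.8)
The plan is to first record the standard description of the Picard group of a ruled surface, then use disjointness of the two sections to pin down their self-intersections, and finally invoke the hypothesis $\seg(S)=0$ together with the definition of $\seg$ as a minimum. The only step requiring a small computation is the observation that two disjoint sections of a ruled surface have opposite self-intersections; everything else is immediate.

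\textbf{Step 1: Picard group.} Let $s_1,s_2$ be the two disjoint sections of $\pi\colon S\to C$. Fix $s_2$ as a reference section. Then $\Pic(S)=\mathbb{Z}[s_2]\oplus \pi^*\Pic(C)$, the intersection form is determined by $s_2\cdot f=1$, $f^2=0$ for a fibre $f$, and for a divisor class $\mathfrak{b}$ on $C$ one has $(\pi^*\mathfrak{b})\cdot f=0$, $(\pi^*\mathfrak{b})\cdot s_2=\deg\mathfrak{b}$, $(\pi^*\mathfrak{b})^2=0$. Write $s_1\sim s_2+\pi^*\mathfrak{d}$ for a divisor class $\mathfrak{d}$ on $C$, and set $b=\deg\mathfrak{d}$.

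\textbf{Step 2: Self-intersections of the disjoint sections.} Since $s_1\cap s_2=\emptyset$ we have $s_1\cdot s_2=0$; expanding, $0=s_1\cdot s_2=(s_2+\pi^*\mathfrak{d})\cdot s_2=s_2^2+b$, hence $b=-s_2^2$. Then
$$
s_1^2=(s_2+\pi^*\mathfrak{d})^2=s_2^2+2b=s_2^2-2s_2^2=-s_2^2.
$$
So the two disjoint sections satisfy $s_1^2=-s_2^2$.

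\textbf{Step 3: Conclusion via the Segre invariant.} By Definition \ref{defSegre}, $\seg(S)\le \sigma^2$ for every section $\sigma$; in particular $\seg(S)\le s_1^2$ and $\seg(S)\le s_2^2$. Using $\seg(S)=0$ and $s_1^2=-s_2^2$, we get $s_2^2\ge 0$ and $-s_2^2=s_1^2\ge 0$, forcing $s_1^2=s_2^2=0=\seg(S)$. Hence both $s_1$ and $s_2$ are minimal sections, which is the claim. There is no real obstacle here beyond correctly bookkeeping the intersection numbers in Step 2.
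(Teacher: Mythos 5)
Your proof is correct and follows essentially the same route as the paper: a short intersection computation in the rank-two (numerical) Picard group of the ruled surface. The only cosmetic difference is the choice of reference class — you expand against one of the two disjoint sections and extract the clean intermediate fact $s_1^2=-s_2^2$, whereas the paper expands both sections against a fixed minimal section $\sigma$ and uses $s_i\cdot\sigma\geq 0$; both arguments then conclude identically from $\seg(S)=0$.
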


\begin{proof}
	Let $\sigma$ be a minimal section. Let $\operatorname{Num}(S)$ be the group of divisors of $S$, up to numerical equivalence. Then $\operatorname{Num}(S)$ is generated by the classes of $\sigma$ and $f$, where $\sigma$ is a minimal section and $f$ is a fibre; see \cite[Proposition V.2.3]{Hartshorne}. Let $s_1$ and $s_2$ be disjoint sections. In particular, $s_1 \equiv \sigma + b_1 f$ and $s_2 \equiv \sigma + b_2f$ for some $b_1,b_2\in \mathbb{Z}$. Since we have $s_1\cdot \sigma\geq 0$ and $s_2\cdot \sigma \geq 0$, it follows that $b_1\geq 0$ and $b_2\geq 0$. Since $s_1\cdot s_2 = 0$, this implies that $b_1=b_2=0$.
\end{proof}

\begin{lemma}\label{segrenegative}
	Let $S\to C$ be a ruled surface such that $\seg(S)=-n<0$. The following hold:
	\begin{enumerate}
		\item\label{segreneg-1}  There exists a unique section of negative self-intersection, and all other sections have self-intersection at least~$n$.
		\item\label{segreneg-2} Two sections are disjoint if and only if one is the $(-n)$-section and the other has self-intersection $n$.
		\item\label{segreneg-3} There exists a section of self-intersection $n$ if and only if\, $S$ is decomposable.
	\end{enumerate}
\end{lemma}

\begin{proof}
	\begin{enumerate}[wide]
		\item By assumption, there exists a section $s_{-n}$ of self-intersection $s_{-n}^2=-n<0$. Let $s\neq s_{-n}$ be a section; then $s$ is numerically equivalent to $s_{-n} + bf$ for some integer $b$ and $0\leq s\cdot s_{-n}=s_{-n}^2+b$. Therefore, $b\geq -s_{-n}^2=n$, and it follows that $s^2= s_{-n}^2+2b\geq n$.
		\item Denote by $s_1,s_2$ two disjoint sections. Then $s_1\equiv s_{-n} + b_1f$ and $s_2\equiv s_{-n} + b_2f$ for some $b_1,b_2\in \mathbb{Z}$. We get that $0 = s_1\cdot s_2 = -n + b_1 +b_2$. If $s_1$ and $s_2$ are both different from $s_{-n}$, then $b_1\geq n$ and $b_2\geq n$ by \eqref{segreneg-1}, and this contradicts the equality $0 = -n+b_1+b_2$. Then we can assume that $s_1=s_{-n}$ and $b_1 = 0$. It follows that $b_2=n$ and $s_2^2= s_{-n}^2 + 2n = n$. Conversely, if $s$ is a section of self-intersection $n$, then $s\equiv s_{-n} + nf$ and $s_{-n}\cdot s = 0$.
		\item Let $s$ be a section such that $s^2=n$. Then $s\equiv s_{-n} + nf$ and $s\cdot s_{-n}=0$; \textit{i.e.}\ $s$ and $s_{-n}$ are disjoint sections. In particular, $S$ is decomposable. Conversely, if $S$ is decomposable, there exist two disjoints sections, and one of them has self-intersection $n$ by \eqref{segreneg-2}.
	\end{enumerate}  
\end{proof}

\begin{definition}
	Let $f\in \operatorname{Bir}_C(C\times \PP^1)\simeq \operatorname{PGL}(2,\kk(C))$. The \emph{determinant} of $f$, denoted by $\det(f)$, is the element of $\kk(C)^*/(\kk(C)^*)^2$ defined as the class of the determinant of a representative of $f$ in $\operatorname{GL}(2,\kk(C))$.
\end{definition}

A non-trivial decomposable ruled surface $S$ of Segre invariant $0$ admits exactly two minimal sections. In \cite[Theorem 2(3), (4)]{Maruyama}, a necessary and sufficient condition for such surfaces to have an automorphism permuting two minimal sections is given. We provide below a revisited version of this result, that we prove by computations in local charts. 

\begin{lemma}\label{decomposableseg0}
	Let $C$ be a curve. Let $\pi\colon S=\PP(\mathcal{O}_C(D)\oplus \mathcal{O}_C)\to C$ be a decomposable $\PP^1$-bundle, and let $p_1\colon C\times \PP^1 \to C$ be the trivial $\PP^1$-bundle over $C$. Then $\seg(S)=0$ if and only if $\deg(D)=0$. Moreover, if\, $\seg(S)=0$ and $\pi$ is not trivial, then the following hold:
	\begin{enumerate}
		\item\label{decompseg0-1} The $\PP^1$-bundle $\pi$ has exactly two minimal sections $s_1$ and $s_2$ of self-intersection $0$.
		\item\label{decompseg0-2} We have $\Aut_C(S)\simeq \mathbb{G}_m \rtimes \mathbb{Z}/2\mathbb{Z}$ if\, $2D$ is principal. In this case, for each element $\iota\in \Aut_C(S)$ permuting $s_1$ and $s_2$, there exists a birational map $\xi\colon S\dashrightarrow C\times \PP^1 $ such that $\pi = p_1\xi$ and $\xi \iota \xi^{-1} = 
		\begin{bsmallmatrix}
			0 & \beta \\
			1 & 0
		\end{bsmallmatrix}$, with $\beta\in \kk(C)^*$ and $\operatorname{div}(\beta ) =2D$. In particular, $\iota$ is not a square in $\operatorname{Bir}_C(S)$.
		\item\label{decompseg0-3} We have $\Aut_C(S)\simeq \mathbb{G}_m$ if\, $2D$ is not principal.
	\end{enumerate}
\end{lemma}

\begin{proof}
	We first prove that $\seg(S)=0$ if and only if $\deg(D)=0$. Assume that $\seg(S)=0$. By \cite[Lemma 1.15]{Maruyama2} (see also \cite[Corollary 2.16]{fong}), we get $0 = \deg(\mathcal{O}_C(D)\oplus \mathcal{O}_C) - 2\deg(M)$, where $M$ is a line subbundle of $\mathcal{O}_C(D)\oplus \mathcal{O}_C$ of maximal degree. By the additivity of the degree, $ \deg(D)-2\deg(M)=0$. Since $\deg(M) \geq 0$ and $\deg(M)$ is maximal, $\deg(D) = \deg(M) = 0$. Conversely, we have that $\seg(S)\leq 0$ by \cite[Proposition 2.18(1)]{fong}. Moreover, $S$ admits two disjoint sections corresponding to the line subbundles $\mathcal{O}_C$ and $\mathcal{O}_C(D)$ of $\mathcal{O}_C\oplus \mathcal{O}_C(D)$, and they both have self-intersection $0$ by \cite[Proposition 2.15]{fong}. The case $\seg(S)<0$ is ruled out by Lemma~\ref{segrenegative}\eqref{segreneg-1}, and thus $\seg(S)=0$.
	
	We now assume that $\seg(S)=0$ and that $\pi$ is not trivial, and prove
\eqref{decompseg0-1}, \eqref{decompseg0-2}, \eqref{decompseg0-3}.
 The proof of \eqref{decompseg0-1} can be found in \cite[Lemma 2(2)]{Maruyama} or \cite[Proposition 2.18(3.iii)]{fong}. We now prove \eqref{decompseg0-2} and  \eqref{decompseg0-3}. Let $A$ be a very ample divisor on $C$. For a large enough integer $m$, the divisor $B=D+mA$ is also very ample. In particular, we can find $B' \sim B$ and $A' \sim mA$ such that $\operatorname{Supp}(B')\cap \operatorname{Supp}(D)=\emptyset$ and $\operatorname{Supp}(A')\cap \operatorname{Supp}(D)=\emptyset$. Let $E=A'-B'$; then $D +E \sim 0$, and there exists an $f\in \kk(C)$ such that $\operatorname{div}(f) = D + E$ and $\operatorname{Supp}(D) \cap \operatorname{Supp}(E)=\emptyset$. Choose $U = C\setminus \operatorname{Supp}(E)$ and $V = C\setminus \operatorname{Supp}(D)$ as trivializing open subsets of $\pi$, and local trivializations of $\pi$ such that $s_1$ and $s_2$ are, respectively, the zero and the infinity sections. The transition map of $S$ can be written as
\[s_{vu}\colon \left\{\begin{array}{ccl}
U\times \PP^1 &\xdashrightarrow & V\times \PP^1 \\
(x,[y_0:y_1]) &\longmapsto &(x,[f(x)y_0:y_1]).
\end{array}\right. 
\]
	By \eqref{decompseg0-1}, an element of $\Aut_C(S)$ either fixes pointwise $s_1$ and $s_2$, or  permutes $s_1$ and $s_2$. If $\phi\in \Aut_C(S)$ fixes $s_1$ and $s_2$, then it induces automorphisms $\phi_u \colon U\times \PP^1 \to U\times \PP^1$, $(x,[y_0:y_1])\mapsto (x,[\alpha_u(x) y_0 : y_1])$ and $\phi_v \colon V\times \PP^1 \to V\times \PP^1$, $(x,[y_0:y_1])\mapsto (x,[\alpha_v(x) y_0 : y_1])$ with $\alpha_u\in \mathcal{O}_C(U)^*$, $\alpha_v\in \mathcal{O}_C(V)^*$. The condition $\phi_v s_{vu} = s_{vu} \phi_u$ is equivalent to $\alpha_u = \alpha_v = \alpha \in \mathbb{G}_m$. We have shown that
	$\left\{
	\begin{bsmallmatrix}
		\alpha & 0 \\ 0&1
	\end{bsmallmatrix}\mid\alpha\in \mathbb{G}_m
	\right\}$ is the algebraic subgroup of $\Aut_C(S)$ fixing $s_1$ and $s_2$.
	If $\iota\in \Aut_C(S)$ permutes $s_1$ and $s_2$, then $\iota$ induces automorphisms $\iota_u \colon U\times \PP^1 \to U\times \PP^1$, $(x,[y_0:y_1])\mapsto (x,[\beta_u(x) y_1 : y_0])$ and $\iota_v \colon V\times \PP^1 \to V\times \PP^1$, $(x,[y_0:y_1])\mapsto (x,[\beta_v(x) y_1 : y_0])$ with $\beta_u\in \mathcal{O}_C(U)^*$, $\beta_v\in \mathcal{O}_C(V)^*$. The condition $\iota_v s_{vu} = s_{vu}\iota_u$ is now equivalent to $f^2 \beta_u = \beta_v$. In particular, $\operatorname{div}(\beta_v) = 2D$, and $2D$ is a principal divisor. Conversely, if $2D$ is a principal divisor, there exists a $\beta\in \kk(C)^*$ such that $\operatorname{div}(\beta) = 2D$. Choose $\beta_v = \beta$ and $\beta_u = f^{-2} \beta_v$; the automorphisms $\iota_u$, $\iota_v$ glue back to a $C$-automorphism $\iota$ of $S$ of order two which permutes $s_1$ and $s_2$. Thus, $\Aut_C(S)\simeq \mathbb{G}_m \rtimes \mathbb{Z}/2\mathbb{Z}$ if and only if $2D$ is principal, and $\iota_v$ induces the birational map $\xi\colon S\dashrightarrow C\times \PP^1$ given in the statement.
	
	Finally, assume $\iota$ is a square in $\operatorname{Bir}(S)$. Then $\xi \iota \xi^{-1}$ is a square in $\operatorname{Bir}(C\times \PP^1)$ and has determinant $-\beta$. Since $\pi$ is not trivial by assumption, it follows that $D$ is not principal and $\operatorname{div}(\beta)=2D$. This implies that $-\beta=\det(\xi \iota \xi^{-1})$ is not a square, which gives a contradiction.
\end{proof}

Every element of $\Aut_{\PP^1}(\mathbb{F}_n)$ fixes pointwise a section of $\FF_n$. This is not true when we consider $\PP^1$-bundles over a non-rational curve $C$, as we have seen in Lemma~\ref{decomposableseg0}\eqref{decompseg0-2}. The following lemma shows that it is the only exception up to conjugation. 

\begin{lemma}\label{invariantsection}
	Let $C$ be a curve. Let $\pi\colon S\to C$ be a ruled surface, and let $p_1\colon C\times \PP^1 \to C$ be the trivial $\PP^1$-bundle and $f\in \Aut_C(S)$. Then $f$ satisfies one of the following:
	\begin{enumerate}
		\item\label{invsec-1} The morphism $f$ fixes pointwise a section of\, $\pi$.
		\item\label{invsec-2} The morphism $f$ does not fix any section of\, $\pi$, and there exists a birational map $\xi\colon S\dashrightarrow C\times \PP^1 $ such that $\pi=p_1\xi$ and $\xi f \xi^{-1} = 
		\begin{bsmallmatrix}
			0 & \beta \\
			1 & 0
		\end{bsmallmatrix}$, with $\operatorname{div}(\beta ) =2D$ for some divisor $D$ which is not principal.
	\end{enumerate}
	Moreover, if $f$ satisfies \eqref{invsec-2}, then $f$ is not a square in $\operatorname{Bir}_C(S)$.
\end{lemma}

\begin{proof}
	First we deal with the case $\seg(S)\leq 0$. If $\seg(S)<0$, or $\seg(S)=0$ and $S$ is indecomposable, then $S$ has a unique minimal section which is $\Aut_C(S)$-invariant (see Lemma~\ref{segrenegative}\eqref{segreneg-1} and \cite[Lemma 2(1.ii)]{Maruyama}, or \cite[Proposition 2.18(3.ii)]{fong}). If $S$ is trivial, then $\Aut_C(S) = \operatorname{PGL}(2,\kk)$ and every element fixes pointwise a section. In particular, $f$ satisfies  condition \eqref{invsec-1}. Else $\seg(S)=0$, $S$ is decomposable, and $S$ is not trivial. Then $S = \mathbb{P}(\mathcal{O}_C(D)\oplus \mathcal{O}_C)$ for some divisor $D$ of degree $0$, and by Lemma~\ref{decomposableseg0}, $\Aut_C(S)\simeq \mathbb{G}_m$ or $\mathbb{G}_m\rtimes \mathbb{Z}/2\mathbb{Z}$. In particular, the automorphism $f$ either fixes the two minimal sections of $S$ and satisfies \eqref{invsec-1}, or permutes them and satisfies \eqref{invsec-2}.
	
	Assume $\seg(S)>0$. Then $S$ is indecomposable, and $\Aut_C(S)$ is finite; see \cite[Theorem 2(1)]{Maruyama}. Let $s$ be a section of $S$. If $f(s) = s$, we are done. Else $s$ and $f(s)$ intersect in finitely many points which are fixed by~$f$. Blow up these points and contract the strict transforms of their fibres, and repeat the process until  the strict transforms of the sections are disjoint. This yields an $f$-equivariant birational map $\phi\colon  S\dashrightarrow  S'$ with $S'$ decomposable. By \cite[Proposition 2.18(1)]{fong}, it follows that $\seg(S')\leq 0$. Moreover, the strict transforms of $s$ and $f(s)$ by $\phi$ are disjoint and permuted by $\phi f \phi^{-1}$; hence  $\seg(S')=0$ (see Lemma~\ref{segrenegative}\eqref{segreneg-2}). Then Lemma~\ref{decomposableseg0} implies that $ f$ satisfies \eqref{invsec-2}. Since $\phi f \phi^{-1}$ is a not a square in $\operatorname{Bir}_C(S)$ by Lemma~\ref{decomposableseg0}\eqref{decompseg0-2}, it also follows that $f$ is also not a square in $\operatorname{Bir}_C(S)$.
\end{proof}
 
\subsection{Reduction of cases}

The following lemma is an analogue of \cite[Lemma 6.1$(1)\Leftrightarrow (2)$]{Blanc2} for not necessarily rational conic bundles. The proof is slightly more difficult, due to  case \eqref{invsec-2} of Lemma~\ref{invariantsection}, which does not exist in the rational case.

\begin{lemma}\label{twisting}
	Let $C$ be a curve. Let $\kappa\colon X\to C$ be a conic bundle with at least one singular fibre, and let $f\in \Aut_C(X)$ permute the irreducible components of at least one singular fibre. Then $f$ has order two.
\end{lemma}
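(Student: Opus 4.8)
The plan is to blow up the singular fibre $\kappa^{-1}(p)$ where $f$ swaps the two components $E_1,E_2$, and study the induced action. Write $q = E_1 \cap E_2$, which is necessarily fixed by $f$. Contracting $E_1$ (say) produces a $C$-equivariant birational mormism $\eta\colon X \to S$ onto a ruled surface $S$, but this contraction is not $f$-equivariant in general since $f$ does not preserve $E_1$. Instead I would consider a $2$-to-$1$ situation: since $f$ fixes $q$ and swaps $E_1,E_2$, the point $q$ is a fixed point of $f$ at which the two branches $E_1,E_2$ of the fibre are interchanged. If $f$ had order $m>2$, then $f^2$ would be a nontrivial element of $\Aut_C(X)$ which fixes each $E_i$ (as $f$ is an involution on the $2$-element set $\{E_1,E_2\}$), and the goal is to derive a contradiction from the existence of such $f$ together with the fact that $f$ swaps the components.

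First I would reduce to the action on a ruled surface. Contract, for every singular fibre, one of the two $(-1)$-curves in an $f$-equivariant way is impossible on $X$ itself, so instead pass to the fibre $\kappa^{-1}(p)$ over the point $p$ where $f$ permutes the components. Blow up $q = E_1\cap E_2$: this is $f$-equivariant because $q$ is $f$-fixed, producing $\widetilde{X}\to X$ with exceptional curve $\widetilde{E}$ over $q$, and now the strict transforms $\widetilde{E}_1, \widetilde{E}_2$ of $E_1,E_2$ are disjoint $(-1)$-curves, still swapped by the lift $\widetilde f$ of $f$. Now $\widetilde{E}_1 \sqcup \widetilde{E}_2$ is an $\widetilde f$-orbit of disjoint $(-1)$-curves (an $\langle \widetilde f\rangle$-orbit, transitive since $\widetilde f$ swaps them), so contracting both is $\langle \widetilde f\rangle$-equivariant, yielding a ruled surface $\pi\colon S\to C$ together with $g := (\text{contraction})\circ\widetilde f \circ(\text{contraction})^{-1} \in \Aut_C(S)$. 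The image $\widetilde{E}$ becomes a section-germ or a full fibre component; tracking the self-intersection, $\widetilde{E}^2$ goes from $-1$ on $\widetilde X$ up by $1$ for each of the two contractions, so the image of $\widetilde{E}$ in $S$ has self-intersection $-1+2 = 1$ — hence it is (a component of) a smooth fibre, and in fact over $p$ the fibre of $\pi$ is smooth, irreducible, with $g$ acting on it.

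Next I would extract the key structural fact: $g$ restricted to the fibre $\pi^{-1}(p)\simeq \PP^1$ has the two points $c_1 := $ image of $\widetilde E_1$ and $c_2 := $ image of $\widetilde E_2$ as an orbit of size $2$ (swapped, because $\widetilde f$ swapped $\widetilde E_1,\widetilde E_2$), and the point $c_0 := \widetilde E \cap \pi^{-1}(p)$... — here one has to be careful, but the upshot is that $g$ acts on $\pi^{-1}(p)\simeq\PP^1$ with an orbit of size exactly $2$. By Lemma \ref{invariantsection}, $g$ either (1) fixes a section pointwise, or (2) is conjugate to $\left[\begin{smallmatrix}0 & \beta\\ 1 & 0\end{smallmatrix}\right]$ with $\operatorname{div}(\beta)=2D$, $D$ not principal; and in case (2), $g$ — equivalently $\left[\begin{smallmatrix}0 & \beta\\ 1 & 0\end{smallmatrix}\right]$ — has order two, so $f$ does too (the conjugations and the single blow-up/contraction preserve the order of the element). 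In case (1), $g$ fixes a section $\sigma$ of $\pi$ pointwise; then $g$ fixes the point $\sigma\cap\pi^{-1}(p)$, so on $\pi^{-1}(p)\simeq\PP^1$ the element $g|_{\pi^{-1}(p)}$ is an automorphism of $\PP^1$ fixing a point and admitting an orbit of size $2$. An automorphism of $\PP^1$ fixing a point is either unipotent (a single fixed point, no orbit of size $2$) or diagonalizable with exactly two fixed points; if diagonalizable with two fixed points, its orbits away from the fixed points have size equal to the order of the nontrivial eigenvalue ratio, and it has an orbit of size $2$ precisely when that order is $2$ — i.e. $g|_{\pi^{-1}(p)}$ is an involution. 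But then, since $g$ fixes a section pointwise and acts as an involution on one (hence, by rigidity/connectedness of the family of fibres, on every) fibre, $g$ has order two, so $f$ does.

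The main obstacle I expect is the bookkeeping in the passage $X \rightsquigarrow \widetilde{X} \rightsquigarrow S$: one must check that the single blow-up at $q$ followed by the two contractions is genuinely $\langle f\rangle$-equivariant (which hinges on $q$ being $f$-fixed and $\{\widetilde E_1,\widetilde E_2\}$ being a single transitive $\langle\widetilde f\rangle$-orbit of disjoint $(-1)$-curves), that the resulting $S$ is a ruled surface over $C$ (no other $(-1)$-curves in that fibre get created), and — most delicately — that the orbit of size $2$ on $\pi^{-1}(p)$ really persists, i.e. that $\widetilde f$ does not secretly fix $c_1$ and $c_2$ individually. That last point is exactly where one uses that $f$ \emph{permutes} the components $E_1,E_2$ nontrivially: the contraction identifies $\widetilde E_i$ with $c_i$ $\langle f\rangle$-equivariantly, so $f$ swaps $E_1\leftrightarrow E_2$ forces $g$ to swap $c_1\leftrightarrow c_2$. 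Once these points are nailed down, the two cases of Lemma \ref{invariantsection} close the argument, with case (2) being immediate and case (1) reducing to the elementary classification of automorphisms of $\PP^1$ by their fixed points.
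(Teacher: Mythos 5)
Your construction breaks down at the very first geometric step. In $X$ the two components $E_1,E_2$ of the singular fibre are $(-1)$-curves meeting transversally at $q$; after blowing up $q$ their strict transforms $\widetilde E_1,\widetilde E_2$ have self-intersection $-1-1=-2$, not $-1$, so they cannot be contracted to smooth points and no smooth ruled surface $S$ is produced this way (contracting them yields two $A_1$-singularities). Your own bookkeeping already signals this: you find that the image of $\widetilde E$ has self-intersection $+1$, whereas a fibre of a ruled surface must have self-intersection $0$. There is a second, independent gap in your case (1): even granting the construction, you deduce that $g$ has order two from its action on a \emph{single} fibre. This does not determine the global order of a $C$-automorphism. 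For instance, the automorphisms $f_\gamma$ appearing in the proof of Lemma \ref{infiniteseq} fix a section pointwise and act trivially on every fibre over a zero of $\gamma$, yet have infinite order; more generally, for $g=\left[\begin{smallmatrix} a & b\\ 0&1\end{smallmatrix}\right]$ with $a,b\in\kk(C)$, the condition $a(p)=-1$ at one point $p$ makes the restriction to $\kappa^{-1}(p)$ an involution without forcing $a^2=1$ in $\kk(C)$. Your appeal to ``rigidity/connectedness of the family of fibres'' is exactly the missing (and false, as stated) step.

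The paper's argument avoids both problems by working with $f^2$ rather than $f$: since $f^2$ preserves \emph{every} irreducible component of every singular fibre, the contraction $\eta\colon X\to S$ of one component per singular fibre is $f^2$-equivariant, and $g=\eta f^2\eta^{-1}$ is a \emph{square} in $\operatorname{Bir}_C(S)$. The ``Moreover'' clause of Lemma \ref{invariantsection} then rules out alternative (2), so $g$ fixes a section pointwise; its strict transform $s_{inv}$ in $X$ is fixed pointwise by $f^2$ but satisfies $f(s_{inv})\neq s_{inv}$ (because $f$ swaps the two components of some singular fibre and a section meets only one of them). Hence $f$ exchanges the two distinct sections $s_{inv}$ and $f(s_{inv})$, i.e.\ it exchanges two distinct points of the generic fibre, and any element of $\operatorname{PGL}(2,\kk(C))$ exchanging two distinct points is an involution. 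Note that the order-two conclusion is extracted from a pair of sections swapped by $f$ --- a global datum --- not from the action on one fibre; that is the idea your proposal is missing, together with the observation that passing to $f^2$ is what makes an equivariant contraction available at all.
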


\begin{proof}
	Let $\eta\colon X\to S$ be the contraction of one irreducible component in each singular fibre. The automorphism $f^2$ preserves all the irreducible components of the singular fibres; hence $\eta$ is $f^2$-equivariant. Let $g = \eta f^2 \eta^{-1}\in \Aut_C(S)$, which is a square in $\operatorname{Bir}_C(S)$; then $g$ fixes pointwise a section (see Lemma~\ref{invariantsection}). Let $s_{\inv}'$ be a $g$-invariant section of $S$, and $s_{\inv}$ its strict transform by $\eta$ which is $f^2$-invariant. As $f$ exchanges the irreducible components of at least one singular fibre, the section $s_{\inv}$ is not $f$-invariant. The sections $s_{\inv}$ and $f(s_{\inv})$ meet a general fibre in two points which are exchanged by the action of $f$. Thus $f$ has order two. 
\end{proof}

\begin{lemma}\label{invariantexceptional}
	Let $C$ be a curve. Let $\kappa\colon X\to C$ be a conic bundle with at least one singular fibre, such that its two irreducible components are exchanged by an element $\rho\in \Aut(X)$. Let $G$ be the normal subgroup of $\Aut_C(X)$ which leaves invariant each irreducible component of the singular fibres. The following hold:
	\begin{enumerate}
		\item\label{invariantexc-1} If $G$ fixes a section $\sigma_1$ of\, $\kappa$ and $G$ is not trivial, then $\kappa$ is an exceptional conic bundle.
		\item\label{invariantexc-2} If there exists a contraction $\eta\colon X\to S$ such that $\seg(S)\leq 0$ and $S$ is indecomposable, then $G$ is trivial.
	\end{enumerate}
\end{lemma}

\begin{proof}
	\begin{enumerate}[wide]
		\item The subgroup $G\subset \Aut(X)$ is normal; hence $\rho G \rho^{-1} =G$, and the section $\sigma_2=\rho \sigma_1\neq \sigma_1$ is also $G$-invariant. Let $\eta\colon X\to S$ be the contraction of one irreducible component in each singular fibre of $\kappa$, namely the one intersecting $\sigma_2$; then it is a $G$-equivariant birational morphism. Let $H=\eta G \eta^{-1}\subset \Aut_C(S)$, which is not trivial. The images of $\sigma_1$ and $\sigma_2$ by $\eta$ are $H$-invariant sections $s_1$ and $s_2$ of $S$. Assume that $s_1$ and $s_2$ intersect. Choose another section $s_3$. Apply elementary transformations centred on $\{s_i\cap s_j\mid i,j\in \{1,2,3\},i\neq j\}$, and repeat until  the strict transforms of $s_1,s_2,s_3$ are disjoint. This yields an $H$-equivariant birational map $\psi\colon S\dashrightarrow C\times \PP^1$. The group $\psi H \psi^{-1}$ is an algebraic subgroup of $\operatorname{PGL}(2,\kk)$ which fixes the strict transforms of $s_1$, $s_2$ and the basepoints of $\psi^{-1}$. The basepoints of $\psi^{-1}$ coming from the contraction of the strict transforms of the fibres passing through the intersections of $s_1$ and $s_2$ are outside of the strict transforms of $s_1$ and $s_2$. Then $H$ is conjugate to a subgroup of $\PGL(2,\kk)$ fixing three distinct points on $\PP^1$, which implies that $H$ is trivial and gives a contradiction. Therefore, $s_1$ and $s_2$ are disjoint sections of $S$, and it follows that $S$ is decomposable and $\seg(S)\leq 0$ by \cite[Proposition 2.18(1)]{fong}. Thus $\sigma_1$ and $\sigma_2$ are also disjoint sections of $X$ which pass through different irreducible components in each singular fibre. 
		
		Since $\sigma_2 = \rho \sigma_1$, it follows that $\sigma_1^2 = \sigma_2^2$. Then $s_1^2<s_2^2$. If $\seg(S)=0$, then $s_1$ and $s_2$ are both minimal sections as they are disjoint, and this contradicts the inequality $s_1^2<s_2^2$. Therefore, $\seg(S)<0$ and by Lemma~\ref{segrenegative}\eqref{segreneg-2}, it follows that $s_1^2 = -n<0$ and $s_2^2 = n>0$ for $n=-\seg(S)$. In particular, $\eta$ is the blowup of $2n$ points on $\sigma_2$. Then $\kappa$ has $2n$ singular fibres and two disjoint $(-n)$-sections; \textit{i.e.}\ it is an exceptional conic bundle.
		\item If $\seg(S)\leq 0$ and $\pi$ is indecomposable, then $S$ has a unique minimal section which is $\Aut(S)$-invariant, see \cite[Lemma 2(1)(i), (ii)]{Maruyama} (or \cite[Proposition 2.18(2), (3.ii)]{fong}), and its strict transform by $\eta$ is a $G$-invariant section of $\kappa$. If $G$ is not trivial, it follows from \eqref{invariantexc-1} that $X$ is an exceptional conic bundle. This implies that $S$ admits two disjoint sections, which gives a contradiction.\hfill\qedhere	\end{enumerate}
\end{proof}

The key result of this section is the following proposition, analogue of \cite[Lemma 4.3.5]{Blanc}, which will be useful to reduce to the study of automorphism groups of ruled surfaces, exceptional conic bundles and $(\mathbb{Z}/2\mathbb{Z})^2$-conic bundles.

\begin{proposition}\label{reduction}
	Assume that $\operatorname{char}(\kk)\neq2$. Let $C$ be a curve. Let $\kappa\colon X\to C$ be a conic bundle with at least one singular fibre, such that its two irreducible components are exchanged by an element of $\Aut(X)$. Let $G$ be the normal subgroup of $\Aut_C(X)$ which leaves invariant every irreducible component of the singular fibres. If $G$ is not trivial and if there exists a contraction $\eta\colon X\to S$ with $S$ a decomposable $\PP^1$-bundle over $C$, then $\kappa$ is an exceptional conic bundle. Else, $\Aut_C(X)$ is isomorphic to $(\mathbb{Z}/2\mathbb{Z})^r$ for some $r\in \{0,1,2\}$. 
\end{proposition}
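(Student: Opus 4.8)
The plan is to argue according to whether the normal subgroup $G\subset\Aut_C(X)$ is trivial, using throughout that $\Aut_C(X)\subset\operatorname{Bir}_C(X)\simeq\operatorname{PGL}(2,\kk(C))$: since $\operatorname{char}(\kk)\neq2$, the largest elementary abelian $2$-subgroup of $\operatorname{PGL}(2,K)$ over any field $K$ is a Klein four-group, so every subgroup of $\Aut_C(X)$ of exponent $2$ is isomorphic to $(\mathbb{Z}/2\mathbb{Z})^r$ with $r\leq2$. I will also use that every element of $\Aut_C(X)$ fixes each fibre of $\kappa$ (by definition of $\Aut_C(X)$, as $\kappa f=\kappa$), and that $G$ — being the kernel of the action of $\Aut_C(X)$ on the finite set of irreducible components of the singular fibres — contains $\Autzero(X)\cap\Aut_C(X)$ and is normal in $\Aut(X)$ (since $\Aut(X)$ permutes those components).

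If $G=\{1\}$, then any $f\in\Aut_C(X)$ permutes the two components of each singular fibre, and if $f\neq1$ it cannot fix all of them (else $f\in G$), so it exchanges the components of some singular fibre and hence has order $2$ by Lemma \ref{twisting}. Thus $\Aut_C(X)$ has exponent $2$, so $\Aut_C(X)\simeq(\mathbb{Z}/2\mathbb{Z})^r$ with $r\leq2$; as $G$ is trivial we are in the ``Else'' case and are done.

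Assume now $G\neq\{1\}$, and first suppose $X$ admits a contraction $\eta\colon X\to S$ onto a decomposable $\PP^1$-bundle. I would show that $G$ fixes a section of $\kappa$; Lemma \ref{invariantexceptional} (1) then gives that $\kappa$ is an exceptional conic bundle, which is the first alternative. Set $H=\eta G\eta^{-1}\neq\{1\}$, so $\seg(S)\leq0$. If $\seg(S)<0$, the unique section of $S$ of negative self-intersection is $\Aut_C(S)$-invariant, and its strict transform is a $G$-invariant section of $\kappa$. If $\seg(S)=0$ and $S$ is trivial, then $H\subset\operatorname{PGL}(2,\kk)$ fixes each of the (at least one) points blown up by $\eta$; if three of them had distinct images in $\PP^1$ then $H=\{1\}$, so $H$ fixes a point of $\PP^1$, hence a section of $S$, whose strict transform again works. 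If $\seg(S)=0$ and $S=\PP(\mathcal{O}_C(D)\oplus\mathcal{O}_C)$ is non-trivial, then $\Aut_C(S)\in\{\mathbb{G}_m,\mathbb{G}_m\rtimes\mathbb{Z}/2\mathbb{Z}\}$ by Lemma \ref{decomposableseg0}; if $H\subset\mathbb{G}_m$ it fixes both minimal sections and we are done, so the whole point is to exclude the case where $H$ contains the involution $\iota\colon y\mapsto\beta/y$ (with $\operatorname{div}(\beta)=2D$) of Lemma \ref{decomposableseg0} (2) exchanging the two minimal sections $s_1,s_2$. This is the main obstacle, and the only place where the hypothesis that some $\rho\in\Aut(X)$ exchanges the components of a singular fibre is genuinely used.

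To handle it, note that the generator $g$ of $G$ lifting $\iota$ must fix every point blown up by $\eta$, which forces these points to lie off $s_1\cup s_2$ and on the bisection $\widetilde D=\{y^2=\beta\}$ (étale over $C$, as $\operatorname{char}(\kk)\neq2$); hence $\mathbb{G}_m\cap H=\{1\}$, so $G=\langle g\rangle\simeq\mathbb{Z}/2\mathbb{Z}$, and the strict transform of $\widetilde D$ is the one-dimensional part of $\operatorname{Fix}(g)$, meeting both components of every singular fibre and covering $C$ two-to-one. Since $G$ is normal in $\Aut(X)$, the element $\rho$ commutes with $g$, hence preserves this bisection; following $\rho$ through $\eta$ (the morphism $\eta\rho$ contracts, over the fixed base point of the relevant singular fibre, the \emph{other} $(-1)$-curve, i.e.\ performs an elementary transformation and changes the parity of the degree of the $\PP^1$-bundle obtained) while $\eta$ and $\eta\rho$ have the same target $S$ with $\seg(S)=0$, one reaches a contradiction; a complementary intersection count shows that such an $X$ would carry two disjoint sections of self-intersection $0$ but no pair of disjoint sections of negative self-intersection, so it cannot be exceptional — consistently confirming that this case does not occur. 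Finally, suppose $X$ admits \emph{no} contraction onto a decomposable $\PP^1$-bundle; then, fixing any contraction $\eta\colon X\to S$ of one $(-1)$-curve per singular fibre, $S$ is indecomposable with $\seg(S)>0$ by Lemma \ref{invariantexceptional} (2), so $\Aut_C(S)$ is finite (\cite[Theorem 2 (1)]{Maruyama}) and hence so are $G$ and $\Aut_C(X)$. By Lemma \ref{invariantsection} each nontrivial element of $H=\eta G\eta^{-1}$ either is an involution or fixes a section of $S$; in the latter case, having finite order it would fix two sections of $S$, which are then disjoint and make $S$ decomposable, a contradiction. Thus $H$ has exponent $2$, and together with the order-$2$ elements of $\Aut_C(X)\setminus G$ provided by Lemma \ref{twisting} so does $\Aut_C(X)$, whence $\Aut_C(X)\simeq(\mathbb{Z}/2\mathbb{Z})^r$ with $r\leq2$; since $G\neq\{1\}$ and there is no decomposable contraction, we are in the ``Else'' case, which completes the proof. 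The step I expect to be hardest is the exclusion of the degenerate configuration in the previous paragraph, where one must combine the normal form of Lemma \ref{decomposableseg0} (2), the presence of $\rho$, and parity/intersection bookkeeping on the various contractions of $X$.
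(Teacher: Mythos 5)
Your treatment of the cases $G=\{1\}$, $\seg(S)<0$, and the case with no decomposable contraction follows the paper's lines (though in the last case your claim that a finite-order element of $\Aut_C(S)$ fixing a section must fix two \emph{disjoint} sections is unjustified --- two pointwise-fixed sections of a ruled surface can very well meet; the paper instead quotes \cite[Lemma 3]{Maruyama} to see directly that $\Aut_C(S)$ embeds into $\Pic^0(C)[2]$ and hence has exponent two). The genuine gap sits exactly where you flagged it: the subcase $\seg(S)=0$, $S=\PP(\mathcal{O}_C(D)\oplus\mathcal{O}_C)$ non-trivial, with $H=\eta G\eta^{-1}$ containing the involution exchanging the two minimal sections. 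There you assert that ``one reaches a contradiction'' by following $\rho$ through $\eta$, but no contradiction is actually derived, and your closing remark (``such an $X$ \dots cannot be exceptional --- consistently confirming that this case does not occur'') is logically backwards: showing that $X$ would not be exceptional does not show the configuration is impossible; it shows that, if it occurs, it falls outside the first alternative of the dichotomy.

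Indeed this configuration cannot be excluded, and the paper does not exclude it. In that subcase the paper observes that the base points of $\eta^{-1}$ lie off the two minimal sections $s_1',s_2'$, that in a local trivialization $\mathbb{G}_m$ acts by $(x,[u:v])\mapsto(x,[\alpha u:v])$ and so no non-trivial element of $\mathbb{G}_m$ fixes a point outside $s_1'\cup s_2'$; hence $H\cap\mathbb{G}_m=\{1\}$, $G$ has order two, and Lemma \ref{twisting} gives that every element of $\Aut_C(X)$ is an involution, so $\Aut_C(X)\simeq(\mathbb{Z}/2\mathbb{Z})^r$ --- i.e.\ this subcase lands in the second alternative, not the first. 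The Example at the end of Section 3 produces precisely such $(\mathbb{Z}/2\mathbb{Z})^2$-conic bundles with a non-trivial element acting trivially on $\Pic(X)$, and these are not exceptional; so the stronger claim you are aiming at (a decomposable contraction together with $G\neq\{1\}$ always forces $\kappa$ exceptional) is false, and the proposition must be read as the disjunction ``$\kappa$ is exceptional, or $\Aut_C(X)\simeq(\mathbb{Z}/2\mathbb{Z})^r$''. To repair your argument, delete the attempted exclusion and replace it by the computation above, concluding this subcase via Lemma \ref{twisting} rather than via Lemma \ref{invariantexceptional}~(1).
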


\begin{proof}
	If $G$ is trivial, then every element of $\Aut_C(X)$ is an involution. This implies that $\Aut_C(X)$ is a finite subgroup of $\operatorname{PGL}(2,\kk(C))$, and the statement follows. 
	Assume that $G$ is not trivial, and let $\eta\colon X \to S$ be a contraction, where $\pi\colon S\to C$ is a $\PP^1$-bundle. Then $\eta$ is $G$-equivariant, and $H=\eta G \eta^{-1}\subset \Aut_C(S)$ is not trivial. Three cases arise:
	\begin{enumerate}[wide]
		\item First assume  that $\seg(S)< 0$. Then $S$ admits a unique minimal section, and its strict transform by $\eta$ is a $G$-invariant section of $\kappa$. By Lemma~\ref{invariantexceptional}, $S$ is decomposable, and $\kappa$ is an exceptional conic bundle.
		 
		 \item Assume that $\seg(S)=0$. If a section of $S$ of self-intersection $0$ passes through at least one of the points blown up by $\eta$, its strict transform is a section $s$ of $X$ of negative self-intersection. Contracting in each fibre the irreducible component not intersecting $s$ gives a birational morphism $\eta'\colon X\to S'$ with $\seg(S')<0$, reducing to the previous case. We now assume that no section of $S$ of self-intersection $0$ passes through any point blown up by $\eta$. Firstly, $\pi\colon S\to C$ is not a trivial bundle, as otherwise sections of $S$ of self-intersection $0$ would cover $S$.  From Lemma~\ref{invariantexceptional}\eqref{invariantexc-2}, $S$ is decomposable. Moreover, by Lemma~\ref{decomposableseg0}\eqref{decompseg0-1}, there exist exactly two disjoint sections $s_1',s_2'$ of $S$ of self-intersection $0$. Furthermore, $\eta G\eta^{-1}$ is a non-trivial subgroup of  $\Aut_C(S)$, isomorphic to $\mathbb{G}_m$ or $\mathbb{G}_m\rtimes \mathbb{Z}/2\mathbb{Z}$ (see Lemma~\ref{decomposableseg0}), that fixes the basepoints of $\eta^{-1}$ not lying on $s_1'$ or $s_2'$. We now prove that no non-trivial element of $\eta G \eta^{-1}$ can lie in $\mathbb{G}_m$: take a trivializing open subset $U\subseteq C$ of $\pi\colon S\to C$ containing the image of a basepoint, and take an isomorphism $\pi^{-1}(U)\simeq U\times \mathbb{P}^1$ sending $s_1',s_2'$ onto the zero and infinity sections; then the action of $\mathbb{G}_m$ on $U\times \mathbb{P}^1$ is  $(x,[u:v])\mapsto (x,[\alpha u:v])$, and thus no non-trivial element of $\mathbb{G}_m$ fixes any point outside of $s_1',s_2'$. Then $\eta G \eta^{-1} \cap \mathbb{G}_m =\{1\}$, and $G$ has order two. By Lemma~\ref{twisting}, every element of $\Aut_C(X)$ is an involution. As $\Aut_C(X)$ is a finite subgroup of $\PGL(2,\kk(C))$, this implies that $\Aut_C(X)\simeq (\mathbb{Z}/2\mathbb{Z})^r$ for some $r\in \{0,1,2\}$.
		 
		\item Assume that $\seg(S)>0$. In particular, $S$ is indecomposable (see \cite[Proposition 2.18(1)]{fong}). Then from \cite[Lemma 3]{Maruyama}, $\Aut_C(S)$ is isomorphic to a subgroup of $\Pic^0(C)[2]$. In particular, it is a finite subgroup of $\operatorname{PGL}(2,\kk(C))$ such that every element is an involution. Hence $\Aut_C(S) \simeq (\mathbb{Z}/2\mathbb{Z})^s$ for some $s\in \{0,1,2\}$. It follows that every element of $G$ is an involution, and by Lemma~\ref{twisting} every element of $\Aut_C(X)$ is an involution. Since $\Aut_C(X)$ is a finite subgroup of $\operatorname{PGL}(2,\kk(C))$, it follows that $\Aut_C(X)\simeq (\mathbb{Z}/2\mathbb{Z})^r$ for some $r\in \{0,1,2\}$. 
\hfill\qedhere	\end{enumerate}
\end{proof}

\section{Automorphism groups of irrational conic bundles}

\subsection{Infinite increasing sequence of automorphism groups}

We first prove the following lemma, which is a generalization of \cite[Theorem A]{fong}. The proof works essentially the same, based on an explicit automorphism of ruled surfaces computed in \cite{Maruyama}.

\begin{lemma}\label{infiniteseq}
	Let $C$ be a curve of positive genus and $\pi\colon S\to C$ be a ruled surface such that $\seg(S)<0$. Then there exists an infinite family $\{S_i,\phi_i\}_{i\geq 1}$, where the $\pi_i\colon S_i\to C$ are ruled surfaces and the $\phi_i\colon S \dashrightarrow S_i$ are $\Aut(S)$-equivariant birational maps, such that
	$$
	\Aut(S) \subsetneq \phi_1^{-1} \Aut(S_1) \phi_1 \subsetneq \cdots \subsetneq \phi_n^{-1} \Aut(S_n) \phi_n \subsetneq \cdots
	$$
	is an infinite increasing sequence of algebraic subgroups of\, $\operatorname{Bir}(C\times \PP^1)$. In particular, $\Aut(S)$ is not a maximal algebraic subgroup of $\operatorname{Bir}(C\times \PP^1)$.
\end{lemma}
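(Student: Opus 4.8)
The plan is to produce the surfaces $S_i$ by iterating $\Aut(S_i)$-equivariant elementary transformations that make the Segre invariant strictly more negative, so that the unipotent part of $\Aut_C(S_i)$ — the explicit automorphisms of \cite{Maruyama} — has dimension at least $-\seg(S_i)+1-g$, hence tends to infinity. This is the direct analogue of the argument proving \cite[Theorem A]{fong}.

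\emph{Structural input.} Let $T\to C$ be any ruled surface with $\seg(T)=-m<0$. By Lemma \ref{segrenegative} (1), $T$ has a unique section $s$ of negative self-intersection, $s^2=-m$, and $s$ is $\Aut(T)$-invariant. Writing $T=\PP(E)$, the inequality $\seg(T)<0$ means that $E$ has a unique sub-line-bundle $L_1$ of maximal degree (uniqueness by Lemma \ref{segrenegative} (1) again), that $\deg L_1-\deg L_2=m$ for $L_2=E/L_1$, and that $s$ is cut out by $L_1$. Every automorphism of $E$ preserves $L_1$, so the image of $\Aut(E)$ in $\Aut_C(T)$ contains the unipotent subgroup $\operatorname{Hom}(L_2,L_1)=H^0(C,L_1\otimes L_2^{-1})$, whose underlying line bundle has degree $m$; by Riemann--Roch, $\dim\Aut_C(T)\ge h^0(C,L_1\otimes L_2^{-1})\ge m+1-g$. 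Moreover $\pi_*\colon\Aut(T)\to\Aut(C)$ has finite image: this is clear when $g\ge 2$, and when $g=1$ a relation $\tau_a^*E\simeq E\otimes N$ forces $\tau_a^*L_1\simeq L_1\otimes N$, $\tau_a^*L_2\simeq L_2\otimes N$, hence $\tau_a^*(L_1\otimes L_2^{-1})\simeq L_1\otimes L_2^{-1}$, so $m\cdot a=0$ in $C$ and only finitely many translations lift.

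\emph{Construction.} Set $S_0=S$, $m_0=n$. Given $S_i\to C$ with $\seg(S_i)=-m_i<0$ and minimal section $s^{(i)}$, choose any $p_i\in s^{(i)}$. Since $\Aut(S_i)$ preserves $s^{(i)}$ and $\pi_*(\Aut(S_i))$ is finite by the previous paragraph, the orbit $\Omega_i=\Aut(S_i)\cdot p_i$ is a finite subset of $s^{(i)}$ meeting each fibre at most once. Blowing up $\Omega_i$ is $\Aut(S_i)$-equivariant; the strict transforms of the fibres through $\Omega_i$ are then disjoint $(-1)$-curves permuted by $\Aut(S_i)$, so contracting them is again $\Aut(S_i)$-equivariant. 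This yields a ruled surface $S_{i+1}\to C$ and an $\Aut(S_i)$-equivariant birational map $\psi_{i+1}\colon S_i\dashrightarrow S_{i+1}$; the strict transform of $s^{(i)}$ is a section of self-intersection $-m_i-|\Omega_i|<-m_i$, so by Lemma \ref{segrenegative} (1) it is the minimal section $s^{(i+1)}$ and $\seg(S_{i+1})=-m_{i+1}$ with $m_{i+1}\ge m_i+1$. Putting $\phi_i=\psi_i\cdots\psi_1\colon S\dashrightarrow S_i$, the equivariance of the $\psi_j$ shows that each $\phi_i$ is $\Aut(S)$-equivariant and that
$$\Aut(S)\subseteq\phi_1^{-1}\Aut(S_1)\phi_1\subseteq\phi_2^{-1}\Aut(S_2)\phi_2\subseteq\cdots$$
is an increasing chain of algebraic subgroups of $\operatorname{Bir}(C\times\PP^1)$.

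\emph{Conclusion and main obstacle.} Applying the structural input to $S_i$ gives $\dim\phi_i^{-1}\Aut(S_i)\phi_i\ge\dim\Aut_C(S_i)\ge m_i+1-g$, which tends to $\infty$ since $m_i$ strictly increases. An increasing chain of algebraic subgroups with unbounded dimension has infinitely many strict inclusions, and relabelling along the subsequence where the dimension jumps yields the required strictly increasing sequence $\{S_i,\phi_i\}_{i\ge1}$; in particular $\Aut(S)$ is properly contained in an algebraic subgroup of $\operatorname{Bir}(C\times\PP^1)$ and hence is not maximal. The delicate point is the first paragraph: identifying a unipotent subgroup of $\Aut_C(T)$ with the global sections of a line bundle of degree $-\seg(T)$ (this is where the explicit automorphisms of \cite{Maruyama} are used, and where one must check they really globalize as automorphisms of $E$), together with the finiteness of $\pi_*(\Aut(S_i))$ invoked at each step, the genus-one case being the only one needing an argument. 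The rest is the standard equivariant elementary-transformation bookkeeping, of the same kind used in Lemma \ref{conicequivariant}.
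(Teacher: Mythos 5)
Your proposal is correct and follows essentially the same route as the paper: the same equivariant elementary transformations along the orbit of a point on the (unique, invariant) minimal section drive $\seg(S_i)\to-\infty$, and the same unipotent automorphisms coming from $H^0(C,L_1\otimes L_2^{-1})=H^0(C,\det(V)^{-1}\otimes L^2)$, estimated by Riemann--Roch, force the chain to be non-stationary. The only cosmetic difference is that you conclude by an unbounded-dimension count (which is exactly the content of the paper's Remark \ref{dimensionarbitrarylarge}) rather than by exhibiting a specific $f_\gamma$ not fixing a base point of $\eta_n^{-1}$, and you supply your own proof of the finiteness of $\pi_*(\Aut(S))$ where the paper cites Maruyama.
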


\begin{proof}
	Since $\seg(S)<0$, there exists a unique negative section (see Lemma~\ref{segrenegative}\eqref{segreneg-1}), which is $\Aut(S)$-invariant. From \cite[Lemmas 6 and 7]{Maruyama}, the morphism of algebraic groups $\pi_*\colon \Aut(S)\to \Aut(C)$ has finite image. Let $p$ be a point on the minimal section; its orbit by the $\Aut(S)$-action is a finite subset of the minimal section. The blowup of the orbit of $p$ followed by the contractions of the strict transforms of the fibres defines an $\Aut(S)$-equivariant birational map $\eta_1 \colon S\dashrightarrow S_1$ with $\seg(S_1) < \seg(S) $. Repeating this process gives rise to a family of ruled surfaces $\{\pi_i\colon S_i\to C \}_{i\geq 1}$ with an infinite sequence
	\begin{equation}
		\Aut(S) \subset \phi_1^{-1} \Aut(S_1) \phi_1 \subset \cdots \subset\phi_n^{-1} \Aut(S_n) \phi_n \subset \cdots,\label{notstationaryseq} \tag{$\dagger$}
	\end{equation}
	where $\phi_i = \eta_i \cdots \eta_1$. We will see that this sequence is not stationary.
	
	Take $n$ large, and let $z=\pi(p)$. By a choice of trivialization $\pi_n^{-1}(U)\simeq U\times \PP^1$, we can assume that $q=(z,[0:1])\in U\times \PP^1$ is a basepoint of $\eta_n^{-1}\colon S_n \dashrightarrow S_{n-1}$. Let $V$ be a vector bundle of rank two over~$C$ such that $\PP(V)=S_n$, and let $L\subset V$ be the line subbundle associated to the minimal section in $S_n$. Let $\mathcal{L}=\det(V)^{-1}\otimes L^2$; it follows from \cite[Corollary 2.16]{fong} that $\deg(\mathcal{L})= -\seg(S_n)$. Since $n$ is chosen large, we can assume that $\seg(S_n)<0$ is small enough such that $\mathrm{h}^1(C,\mathcal{L}) = \mathrm{h}^1(C,\mathcal{L}\otimes \mathcal{O}_C(z)^{-1})=0$. By the Riemann--Roch theorem, we get that $h^0(C,\mathcal{L}\otimes \mathcal{O}_C(z)^{-1}) < h^0(C,\mathcal{L})$; \textit{i.e.}\ $z$ is not a basepoint of the complete linear system $|\mathcal{L}|$. Therefore, there exists a $\gamma \in \mathrm{H}^0(C,\mathcal{L})$ such that $\gamma(z)\neq 0$. 
	
	Let $(U_i)_i$ be trivializing open subsets of $\pi_n$; the automorphisms
	\begin{align*}
		U_i \times \PP^1 & \longrightarrow U_i\times \PP^1 \\
		(x,[y_0:y_1]) & \longmapsto (x,[y_0+y_1\gamma_{|U_i}(x):y_1])
	\end{align*}
	glue into a $C$-automorphism $f_\gamma$ of $S_n$ (see \cite[case (b), p.~92]{Maruyama}) such that $f_\gamma$ does not fix $q$ and $\Aut(S_{n-1}) \subsetneq \eta_n^{-1}\Aut(S_n)\eta_n$. We have proved that the sequence \eqref{notstationaryseq} is not stationary. Removing in the sequence the groups which are not strictly bigger than the previous term and renaming the elements accordingly yields the increasing sequence of the statement.
\end{proof}

\begin{remark}\label{dimensionarbitrarylarge}
	Notice that the proof of Lemma~\ref{infiniteseq} implies \cite[Theorem A]{fong}. Let $\gamma \in \Gamma(C,\det(V)^{-1}\otimes L^2)$ be as above. For any $t\in \mathbb{G}_a$, the automorphisms 
	\begin{align*}
		U_i \times \PP^1 & \longrightarrow U_i\times \PP^1 \\
		(x,[y_0:y_1]) & \longmapsto (x,[y_0+y_1t\gamma_{|U_i}(x):y_1])
	\end{align*}
	glue into a $C$-automorphism $f_{t\gamma}$. In particular, each automorphism $f_\gamma$ belongs to the connected component of the identity. Restricting the infinite chain \eqref{notstationaryseq} to the connected components, one gets that
	$$
	\Aut^\circ(S) \subsetneq \phi_1^{-1} \Aut^\circ(S_1) \phi_1 \subsetneq \cdots \subsetneq\phi_n^{-1} \Aut^\circ(S_n) \phi_n \subsetneq \cdots,
	$$
	and in particular $\dim(\Autzero(S_n))<\dim(\Autzero(S_{n+1}))$ for all $n$.
\end{remark}

\subsection{Exceptional conic bundles}

The following lemma is a generalization of \cite[Lemma 4.3.1]{Blanc} for exceptional conic bundles which are not necessarily rational.

\begin{lemma}\label{exceptional}
	Let $C$ be a curve, and let $\kappa\colon X\to C$ be a conic bundle with $2n\geq 0$ singular fibres. The following assertions are equivalent:
	\begin{enumerate}
		\item\label{exc-1} The bundle $\pi$ is exceptional.
		\item\label{exc-2} There exist exactly two sections $s_1,s_2$ of self-intersection $-n$, which are disjoint and intersect different irreducible components of each singular fibre.
		\item\label{exc-3} There exists a birational morphism $\eta_n\colon X\to S$, where $S$ is a decomposable $\PP^1$-bundle over $C$ with $\seg(S)=-n$, which consists in the blowup of\, $2n$ points on a section of self-intersection $n$ in $S$.
		\item\label{exc-4} There exists a birational morphism $\eta_0\colon X \to S$, where $S$ is a decomposable $\PP^1$-bundle over $C$ with $\seg(S)=0$, which consists in the blowup of\, $2n$ points such that no two points are in the same fibre, $n$ are chosen on a section of self-intersection $0$, and the other $n$ are chosen on another section of self-intersection $0$.
	\end{enumerate}
\end{lemma}
	
\begin{proof}
	\begin{enumerate}[wide]
		\item[\eqref{exc-1} $\implies$ \eqref{exc-2}, \eqref{exc-3}] Assume $\kappa$ is exceptional, and let $s_1,s_2$ be sections of self-intersection $-n$. Contracting in each singular fibre the irreducible component which does not meet $s_1$ yields a birational morphism $\eta_n\colon X \to S$, where $S$ is a ruled surface over $C$. Denote by $s_1'$ and $s_2'$ the images of $s_1$ and $s_2$ by $\eta_n$; then $s_1'^2 = -n$ and $s_2'^2 \leq n$. The case $s_2'^2 < n$ cannot happen (see Lemma~\ref{segrenegative}\eqref{segreneg-1}), and the equality implies $s_1$ and $s_2$ pass through different irreducible components of each singular fibre. Then the sections $s_1$ and $s_2$ are disjoint (see Lemma~\ref{segrenegative}\eqref{segreneg-2}), and $S$ is decomposable (see Lemma~\ref{segrenegative}\eqref{segreneg-3}). Assume there exists a third section $s_3$ of self-intersection $-n$ on $X'$. By the same argument, $s_3$ has to pass through different irreducible components than $s_1$ and $s_2$. Since each singular fibre contains exactly two irreducible components, this is not possible.
		\item[\eqref{exc-2} $\implies$ \eqref{exc-4}]  Contract in $n$ singular fibres the irreducible components meeting $s_1$, and contract in the other singular fibres the irreducible components meeting $s_2$. This defines a birational morphism $\eta_0\colon X \to S$ such that the images of $s_1$ and $s_2$ by $\eta_0$ are disjoint sections of $S$ of self-intersection $0$. In particular, $S$ is decomposable, and by Lemma~\ref{segrenegative}\eqref{segreneg-1}, $\seg(S)=0$.
		\item [\eqref{exc-2}, \eqref{exc-3}, \eqref{exc-4} $\implies$ \eqref{exc-1}] The implication \eqref{exc-2} $\implies$ \eqref{exc-1} is trivial. The strict transforms by $\eta_n$ of the sections of $S$ of self-intersection $n$ and $-n$ are two sections of $\kappa$ of self-intersection $-n$. The strict transforms by $\eta_0$ of the two sections of $S$ of self-intersection $0$ are two sections of $\kappa$ of self-intersection $-n$. This proves that \eqref{exc-3} and \eqref{exc-4} imply  \eqref{exc-1}.
	\end{enumerate}
\end{proof}

In \cite[Lemma 4.3.3(1)]{Blanc}, it is proven that $\Aut_{\PP^1}(X)\simeq \mathbb{G}_m \rtimes \mathbb{Z}/2\mathbb{Z}$ when $X$ is an exceptional conic bundle over $\PP^1$, which implies that $\Aut(X)$ is maximal. We see below that automorphism groups of exceptional conic bundles over a non-rational curve do not always contain an involution permuting the two $(-n)$-sections (see Proposition~\ref{exceptionalinvolution}) and are not always maximal (see Lemma~\ref{exceptionalmaximal}).

\begin{lemma}\label{exceptionalmaximal}
	Let $C$ be a curve of positive genus, and let $\kappa\colon X\to C$ be an exceptional conic bundle. If $\Aut_C(X)$ contains a non-trivial involution permuting the irreducible components of the singular fibres, then $\Aut_C(X)\simeq\mathbb{G}_m \rtimes \mathbb{Z}/2\mathbb{Z}$ and $\Aut(X)$ is maximal. Else, $\Aut_C(X)\simeq \mathbb{G}_m$, and $\Aut(X)$ can be embedded in a infinite increasing sequence of algebraic subgroups of\, $\Bir(C\times \PP^1)$. 
\end{lemma}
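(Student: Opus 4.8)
The plan is to pin down $\Aut_C(X)$ first, using the two contractions provided by Lemma \ref{exceptional}, and then to bootstrap the consequences for the full group $\Aut(X)$.

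Let $G$ be the normal subgroup of $\Aut_C(X)$ that leaves each irreducible component of each singular fibre invariant. By Lemma \ref{twisting}, every element of $\Aut_C(X)\setminus G$ is an involution, so $\Aut_C(X)$ equals $G$ or $G\rtimes\mathbb{Z}/2\mathbb{Z}$. To identify $G$ I would use Lemma \ref{exceptional}~(3): $X$ is the blowup $\eta\colon X\to S$ of a decomposable $\PP^1$-bundle over $C$ with $\seg(S)=-n$, along $2n$ points lying on the section $s$ of $S$ of self-intersection $n$. The standard torus $\mathbb{G}_m\subseteq\Aut_C(S)$ fixes both $s$ and the $(-n)$-section of $S$ pointwise, hence fixes those $2n$ points, hence lifts through $\eta$; its lift preserves every exceptional curve and every strict transform of a fibre, so $\mathbb{G}_m\hookrightarrow G$. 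Conversely $\eta$ is $G$-equivariant (since $G$ fixes the contracted components), and $H:=\eta G\eta^{-1}\subseteq\Aut_C(S)$ fixes the $(-n)$-section and the $2n$ marked points of $s$; in a local trivialization in which the $(-n)$-section is $\{z=\infty\}$ such an automorphism reads $z\mapsto\alpha(x)z+\beta(x)$ with $\beta$ a global section of a line bundle of degree $n$, and vanishing at $2n$ points forces $\beta\equiv 0$ and then $\alpha\in\kk^*$, so $H\subseteq\mathbb{G}_m$. Hence $G\simeq\mathbb{G}_m$.

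Suppose first that $\Aut_C(X)$ contains an involution $\iota$ exchanging the components; then $\Aut_C(X)\simeq\mathbb{G}_m\rtimes\mathbb{Z}/2\mathbb{Z}$ (a fibre-coordinate computation as in Lemma \ref{decomposableseg0}~(2) shows $\iota$ acts by inversion, and $\iota$ exchanges the two sections $s_1,s_2$ of self-intersection $-n$). For maximality, let $\mathcal{G}$ be an algebraic subgroup of $\operatorname{Bir}(C\times\PP^1)$ containing $\Aut(X)$. Regularizing $\mathcal{G}$ (Proposition \ref{regularization}) and running a $\mathcal{G}$-equivariant MMP (Proposition \ref{Iskovskikh}) yields a conic bundle $Y\to C$ with $\mathcal{G}\subseteq\Aut(Y)$ and an $\Aut(X)$-equivariant birational map $\theta\colon X\dashrightarrow Y$. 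The key point is that $(\Aut(X),X)$ is minimal: the sections $s_1,s_2$ are isomorphic to $C$, hence of genus $\geq 1$, hence \emph{not} $(-1)$-curves, so the only $(-1)$-curves of $X$ are the components of the singular fibres, which meet pairwise and are exchanged within each pair by $\iota$; thus there is no $\Aut(X)$-equivariant contraction. Consequently the (canonical, hence equivariant) minimal resolution of $\theta$ is an isomorphism onto $X$, so $\theta$ is a birational morphism, which contracts nothing by minimality; hence $\theta$ is an isomorphism, $\Aut(Y)=\theta\Aut(X)\theta^{-1}$, and $\mathcal{G}=\Aut(X)$. Equivalently one may invoke Lemma \ref{conicequivariant}: the fixed locus of $\mathbb{G}_m\subseteq\Aut_C(X)$ lies in $s_1\cup s_2$, so any finite $\Aut(X)$-orbit to be blown up would consist of $\mathbb{G}_m$-fixed points on $s_1\cup s_2$ and be exchanged with itself by $\iota$; since $\iota$ is fibrewise, such an orbit contains two points on a common smooth fibre, which Lemma \ref{conicequivariant} forbids, so $\theta$ is an isomorphism.

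Now suppose $\Aut_C(X)$ contains no such involution, so $\Aut_C(X)=G\simeq\mathbb{G}_m$. One checks that $\Aut(X)$ then preserves each of $s_1$ and $s_2$ (an element exchanging them would exchange the two families of singular-fibre components; a fibrewise one would be an involution by Lemma \ref{twisting}, contrary to hypothesis, and the base-permuting case can be excluded as well). Hence the contraction $\eta\colon X\to S$ of Lemma \ref{exceptional}~(3) is $\Aut(X)$-equivariant, so $\Aut(X)$ is conjugate in $\operatorname{Bir}(C\times\PP^1)$ to a subgroup of $\Aut(S)$, where $\seg(S)=-n<0$. Lemma \ref{infiniteseq} produces an infinite strictly increasing chain $\Aut(S)\subsetneq\phi_1^{-1}\Aut(S_1)\phi_1\subsetneq\cdots$ of algebraic subgroups of $\operatorname{Bir}(C\times\PP^1)$; prepending $\Aut(X)\subseteq\Aut(S)$ exhibits $\Aut(X)$ inside an infinite increasing sequence, so $\Aut(X)$ is not maximal.

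I expect the main obstacle to be the maximality assertion in the first case: making the equivariant MMP usable requires enough rigidity of the birational map $\theta$, and this rigidity comes precisely from the minimality of $(\Aut(X),X)$ — which is available only because the two $(-n)$-sections are not rational curves (so cannot be contracted) and because $\iota$ prevents contracting single components of the singular fibres. A secondary point needing care is showing, in the second case, that the contraction onto the ruled surface of negative Segre invariant can indeed be chosen $\Aut(X)$-equivariantly, i.e.\ that no automorphism of $X$ exchanges the two $(-n)$-sections.
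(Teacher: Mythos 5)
Your proposal follows essentially the same route as the paper: identify the component-preserving subgroup $G\subseteq\Aut_C(X)$ with $\mathbb{G}_m$, split according to whether a component-swapping involution exists, use Lemma \ref{conicequivariant} for maximality in the first case, and contract to a ruled surface of negative Segre invariant and invoke Lemma \ref{infiniteseq} in the second. Two remarks. First, you compute $G$ through the model of Lemma \ref{exceptional}~(3) (blow down to $\seg(S)=-n$ and observe that a section of a degree-$n$ line bundle vanishing at the $2n>n$ blown-up points must be identically zero), whereas the paper blows down to the $\seg(S)=0$ model of Lemma \ref{exceptional}~(4) and quotes Lemma \ref{decomposableseg0}; both work. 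Second, your first maximality argument is a non sequitur: minimality of $(\Aut(X),X)$ forbids equivariant contractions \emph{from} $X$, but it does not force the minimal resolution of $\theta$ to be an isomorphism onto $X$ --- $\theta$ could a priori still have base points, since elementary transformations between minimal conic bundles are not morphisms. The fallback you then give via Lemma \ref{conicequivariant} --- every finite $\Aut(X)$-orbit outside the singular fibres consists of $\mathbb{G}_m$-fixed points on $s_1\cup s_2$, which $\iota$ pairs up on common smooth fibres --- is exactly the paper's argument and is the one to keep. Finally, the point you flag at the end, namely that in the $\Aut_C(X)\simeq\mathbb{G}_m$ case no automorphism of $X$ exchanges the two $(-n)$-sections so that the contraction to $\seg(S)<0$ is $\Aut(X)$-equivariant, is also left implicit in the paper, so it does not count against you here.
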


\begin{proof}
	Denote by $s_1,s_2$ the two $(-n)$-sections of $\kappa$. Let $G$ be the subgroup of $\Aut_C(X)$ which leaves invariant the irreducible components of each singular fibre, and let $\eta_0\colon X \to S$ be a birational morphism which contracts an irreducible component in each singular fibre and is such that $\seg(S)=0$ (see Lemma~\ref{exceptional}\eqref{exc-4}). Let $\pi\colon S\to C$ be the morphism such that $\kappa = \pi \eta_0$. Let $s_1'$, $s_2'$ be, respectively, the images of $s_1$ and $s_2$ by~$\eta_0$. Then $\eta_0$ is $G$-equivariant, and $\eta_0 G \eta_0^{-1}$ is an algebraic subgroup of $\Aut_C(S)$ which leaves invariant $s_1'$ and~$s_2'$. If $\pi$ is trivial, then $\eta_0 G \eta_0^{-1}$ is an algebraic subgroup of $\PGL(2,\kk)$ fixing at least two points on a fibre, and thus is contained in $\mathbb{G}_m \subset \operatorname{PGL}(2,\kk)$. If $\pi$ is not trivial, then $\eta_0 G \eta_0^{-1}$ is an algebraic subgroup of $\mathbb{G}_m$ or  $\mathbb{G}_m\rtimes \mathbb{Z}/2\mathbb{Z}$ by Lemma~\ref{decomposableseg0}. Since the element of order two in $\mathbb{Z}/2\mathbb{Z}$ permutes $s_1'$ and $s_2'$, it follows that $\eta_0 G \eta_0^{-1}$ is also contained in $\mathbb{G}_m$ in the second case. Hence $\eta_0 G \eta_0^{-1}$ is contained in a subgroup of $\Aut_C(S)$ isomorphic to $\mathbb{G}_m$. Conversely, every element of $\mathbb{G}_m$ fixes $s_1',s_2'$ (the images of $s_1,s_2$ by $\eta_0$); hence $\eta_0 G\eta_0^{-1} = \mathbb{G}_m$.
	
	The exceptional conic bundle $\kappa$ has exactly two $(-n)$-sections, which are left invariant or are permuted by the elements of $\Aut(X)$. Assume that $\Aut_C(X)$ contains an element $\iota$ which permutes the two $(-n)$-sections of $\kappa$ (or, equivalently, the irreducible components of each singular fibre, by Lemma~\ref{exceptional}\eqref{exc-2}). The automorphism $\iota$ acts on a general fibre by permuting two points, which implies that $\iota$ is an involution. If $f\in \Aut_C(X)$ permutes the two $(-n)$-sections, then $\iota f$ does not; \textit{i.e.} $\iota f\in G$. This implies that $\Aut_C(X) = G\rtimes \langle \iota\rangle$. Moreover, there is no $\iota$-equivariant contraction from $X$, and all $\Aut(X)$-orbits in the complement of the singular fibres are infinite or contain two points on a smooth fibre. Hence there is no $\Aut(X)$-equivariant birational map from $X$ (see Lemma~\ref{conicequivariant}), and $\Aut(X)$ is maximal.
	
	If  $\Aut_C(X)$ does not contain an element $\iota$ as above, then  $\Aut_C(X) = G \simeq \mathbb{G}_m$. Since there exist exactly two $(-n)$-sections, an element of $\Aut(X)$ either fixes them or permutes them. The contraction of $\Aut(X)$-orbits of $(-1)$-curves yields
	an $\Aut(X)$-equivariant birational morphism $\eta_n\colon X \to S$, where $\seg(S)<0$ (see Lemma~\ref{exceptional}\eqref{exc-3}). Then use Lemma~\ref{infiniteseq} to conclude.
\end{proof}

\begin{proposition}\label{exceptionalinvolution}
	Let $C$ be a curve. Let $\kappa\colon X\to C$ be a conic bundle with two disjoint sections $s_1$ and $s_2$ passing through different irreducible components of each singular fibre. Let $\eta\colon X\to S$ be the contraction of an irreducible component in each singular fibre. This yields a ruled surface $\pi\colon S\to C$ such that $\kappa = \pi \eta$. Denote by $s_1'$ and $s_2'$ the images of $s_1$ and $s_2$ by $\eta$. The following hold:
	\begin{enumerate}
	\item\label{excinv-1} We have that $\pi$
          is decomposable; \textit{i.e.}\ there exists a $D\in \Pic(C)$ such that $S=\PP(\mathcal{O}_C(D)\oplus \mathcal{O}_C)$.
		\item\label{excinv-2} The birational morphism $\eta$ is the blowup of finite sets $Z\subset s_1'$ and $P\subset s_2'$.
		\item\label{excinv-3} The group $\Aut_C(X)$ contains a non-trivial involution permuting the irreducible components of each singular fibre if and only if the divisor $-2D$ is linearly equivalent to
		$$
		\sum_{z\in \pi(Z)} z - \sum_{p\in \pi(P)} p.
		$$
		\item\label{excinv-4} If one of the conditions of \eqref{excinv-3} holds, then $\kappa$ is an exceptional conic bundle.
	\end{enumerate}
\end{proposition}

\begin{proof}
	\begin{enumerate}[wide]
		\item Each fibre of $\pi$ is isomorphic to $\PP^1$. It follows that $\pi$ is a ruled surface, which is decomposable because $s_1'$ and $s_2'$ are disjoint. 
		\item Each irreducible component of a singular fibre intersects $s_1$ or $s_2$. It follows that $\eta$ is the blowup of finitely many points lying in $s_1'$ or $s_2'$.
		\item Up to a choice on the trivialization of $S$, we can also assume that $s_1'$ is the zero section and $s_2'$ is the infinity section. Replacing $D$ with another divisor of its linear class, we can assume that $\operatorname{Supp}(D) \cap \pi(Z\cup P) = \emptyset$. Let $U\subset C$ be a trivializing open subset of $\pi$ containing $\pi(Z\cup P)$ and such that $\operatorname{Supp}(D)\subset C\setminus U$. 
		
		First assume that there exists a $f\in \kk(C)$ such that 
		$${\operatorname{div}(f)} = \sum_{z\in \pi(Z)} z - \sum_{p\in \pi(P)} p +2D.$$
		Then define the birational map $\phi_u\colon U\times \PP^1 \dashrightarrow U\times \PP^1$, $(x,[y_0:y_1])\mapsto (x,[f(x)y_1:y_0])$, which is involutive and has basepoints at $Z\cup P$. Take another trivializing open subset $V$ with a trivialization map such that the transition function of $\pi$ equals
		$g_{uv}\colon V\times \PP^1 \dashrightarrow U\times \PP^1$, $(x,[y_0:y_1])\mapsto (x,[\alpha_{uv}(x) y_0:y_1])$, where $\alpha_{uv}\in \kk(C)^*$ denotes the transition  function of $\mathcal{O}_C(D)$. Denote by $\nu_q$ the multiplicity at $q\in C$; then  $\nu_q(\alpha_{uv}^{-2}f) = \nu_q(f)-2\nu_q(\alpha_{uv} )=0$ for all $q\in V\setminus U$. This implies that $\phi_v = g_{uv}^{-1} \phi_u g_{uv}\colon (x,[y_0:y_1]) \mapsto (x,[\alpha_{uv}^{-2}(x)f(x)y_1:y_0])$ extends to a birational map defined on $(V\setminus U)\times \PP^1$. Hence $\phi_u$ extends to a $C$-birational map $\phi$ of $S$, and $\eta^{-1} \phi \eta\in \Aut_C(X)$ is an involution permuting the irreducible components of the singular fibres of $\kappa$. 
		
		Conversely, assume there exists an involution $\psi \in \Aut_C(X)$ permuting the irreducible components of the singular fibres. Then $\eta \psi \eta^{-1}\in \operatorname{Bir}(S)$ acts trivially on $C$ and permutes $s_1$ and $s_2$; hence there exists an $f\in \kk(C)$ such that the restriction of $\eta \psi \eta^{-1}$ to $\pi^{-1}(U)$ yields a birational map 
\[
\phi_u\colon \left\{\begin{array}{rcl}
U\times \PP^1 & \xdashrightarrow & U\times \PP^1 \\
			(x,[y_0:y_1]) & \longmapsto &(x,[f(x)y_1:y_0]).
\end{array} \right.
\]
		Since the set of basepoints of $\eta \psi \eta^{-1}$ is exactly $Z\cup P$, the rational function $f_{|U}$ has zeros in $\pi(Z)$ and poles in $\pi(P)$. Computing in local charts, one can check that $\nu_q(f) = 1$ if $q\in \pi(Z)$ and $\nu_q(f) = -1 $ if $q\in \pi(P)$. Conjugating as before by the transition maps of $\pi$ gives $\phi_v=g_{uv}^{-1}\phi_u g_{uv}\colon V\times \PP^1 \dashrightarrow V\times \PP^1$, $(x,[y_0:y_1]) \mapsto (x,[\alpha_{uv}^{-2}(x)f(x)y_1:y_0])$. The birational map $\eta \psi \eta^{-1}$ is biregular on $\pi^{-1}(C\setminus U)$; hence $\nu_q(\alpha_{uv}^{-2}f) = 0$ for all $q\in V\setminus U$. Thus $\operatorname{div}(f)=\sum_{z\in \pi(Z)} z - \sum_{p\in \pi(P)} p +2D$.
		
		\item Let $m$ be the number of singular fibres of $\kappa$, and denote by $\iota\in \Aut_C(X)$ a non-trivial involution permuting the irreducible components of each singular fibre. Without loss of generality, we can replace $s_2$ with $\iota(s_1)$, and it follows that $s_1^2= s_2^2$. Contracting the irreducible components intersecting $s_1$ in each singular fibre of $\kappa$ gives a birational morphism $\eta\colon X \to S$, where $S$ is decomposable with two disjoint sections $s_1'$ and~$s_2'$. Then $\seg(S)\leq 0$ (see \cite[Corollary 1.17]{Maruyama2} or \cite[Proposition 2.18(1)]{fong}), and $\seg(S)\neq 0$ by the definition of $\eta$ and by Lemma~\ref{segrezero} as $S\to C$ is decomposable by \eqref{excinv-1}. This implies that $s_1'^2 = -\seg(S)$ and $s_2'^2 = \seg(S)$ (see Lemma~\ref{segrenegative}\eqref{segreneg-2}). On the other hand, we have $s_2^2 = s_2'^2$ and $s_1^2 = s_1'^2 - m$. Thus $m=s_1'^2 - s_1^2=-2\seg(S)> 0$. In particular, $\eta$ corresponds to a birational map $\eta_n$ as in Lemma~\ref{exceptional}\eqref{exc-3}, and $\kappa$ is an exceptional conic bundle over $C$.
\hfill\qedhere	\end{enumerate}
\end{proof}

Combining Lemma~\ref{exceptionalmaximal} and Proposition~\ref{exceptionalinvolution}, we get the main result of this section. 

\begin{proposition}\label{exceptionaltheorem}
	Let $C$ be a curve of positive genus. Let $\kappa \colon X\to C$ be an exceptional conic bundle with two $(-n)$-sections $s_1$ and $s_2$. The contraction of an irreducible component in each singular fibre gives a birational morphism $\eta\colon X\to S$, where $\pi\colon S= \PP(\mathcal{O}_C(D)\oplus \mathcal{O}_C)\to C$ is a decomposable ruled surface, for some $D\in \Pic(C)$. In particular, $\kappa = \pi\eta$. Denote by $s_1',s_2'$ the images of $s_1,s_2$ by $\eta$, and by $Z\subset s_1'$, $P\subset s_2'$ the sets of basepoints of~$\eta^{-1}$. The algebraic group $\Aut(X)$ is maximal if and only if $-2D$ is linearly equivalent to 
	$$
	\sum_{z\in \pi(Z)} z - \sum_{p\in \pi(P)} p,
	$$
	and in this case, $\Aut(X)$ fits into an exact sequence of algebraic groups
	$$
	0 \longrightarrow \mathbb{G}_m\rtimes \mathbb{Z}/2\mathbb{Z} \longrightarrow \Aut(X) \overset{\kappa_*}{\longrightarrow} H,
	$$
	where $H$ denotes the subgroup of $\Aut(C)$ which fixes the finite subset $\pi(Z\cup P)$. Else, $\Aut(X)$ is not maximal and can be embedded in an infinite increasing sequence of algebraic subgroups of\, $\operatorname{Bir}(C\times \PP^1)$.
\end{proposition}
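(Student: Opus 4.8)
The plan is to assemble Proposition \ref{exceptionalinvolution} and Lemma \ref{exceptionalmaximal}, together with the elementary fact that every automorphism of $X$ preserves its conic bundle structure. Since $C$ has positive genus, every element of $\Aut(X)$ preserves $\kappa$ and thus induces a morphism of algebraic groups $\kappa_*\colon \Aut(X)\to\Aut(C)$ with $\ker(\kappa_*)=\Aut_C(X)$ (the Remark after Definition \ref{definitionsconic}). That $\pi\colon S=\PP(\mathcal{O}_C(D)\oplus\mathcal{O}_C)\to C$ is decomposable and that $\eta$ is the blow-up of $Z\subset s_1'$ and $P\subset s_2'$ are parts (1) and (2) of Proposition \ref{exceptionalinvolution}, so the setup in the statement is justified. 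Furthermore, as $\eta$ contracts exactly one irreducible component in each of the $2n$ singular fibres of $\kappa$, the set $Z\cup P$ consists of $2n$ points lying in pairwise distinct fibres, so $\pi(Z\cup P)$ is precisely the set of points of $C$ over which $\kappa$ has a singular fibre.

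For the characterization of maximality, I would first invoke Proposition \ref{exceptionalinvolution} (3): the group $\Aut_C(X)$ contains a non-trivial involution permuting the irreducible components of the singular fibres if and only if $-2D$ is linearly equivalent to $\sum_{z\in\pi(Z)}z-\sum_{p\in\pi(P)}p$. Then Lemma \ref{exceptionalmaximal} says that in the presence of such an involution one has $\Aut_C(X)\simeq\mathbb{G}_m\rtimes\mathbb{Z}/2\mathbb{Z}$ and $\Aut(X)$ is maximal, whereas in its absence $\Aut_C(X)\simeq\mathbb{G}_m$ and $\Aut(X)$ embeds into an infinite strictly increasing chain of algebraic subgroups of $\operatorname{Bir}(C\times\PP^1)$, hence is not maximal. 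Chaining these two equivalences yields the stated criterion; in particular it follows a posteriori that the linear-equivalence condition is independent of the choice of $\eta$, since maximality is intrinsic to $X$.

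Finally, assume the condition holds, so $\Aut_C(X)=\ker(\kappa_*)\simeq\mathbb{G}_m\rtimes\mathbb{Z}/2\mathbb{Z}$ by Lemma \ref{exceptionalmaximal}. Every automorphism of $X$ permutes the singular fibres, hence its image under $\kappa_*$ fixes the finite subset $\pi(Z\cup P)$ of $C$, i.e.\ lies in $H$; thus $\operatorname{im}(\kappa_*)\subseteq H$ and we obtain the exact sequence
$$0\longrightarrow\mathbb{G}_m\rtimes\mathbb{Z}/2\mathbb{Z}\longrightarrow\Aut(X)\overset{\kappa_*}{\longrightarrow}H.$$
No serious difficulty arises, as the substantive content is carried by the two cited results; the only points requiring attention are the bookkeeping identifying $\pi(Z\cup P)$ with the image of the singular fibres — so that $H$ is exactly the stabilizer appearing in the statement — and checking that the inclusion $\Aut_C(X)\hookrightarrow\Aut(X)$ realizing the left-hand map is the obvious one.
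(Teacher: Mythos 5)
Your proposal is correct and follows essentially the same route as the paper: the paper's proof likewise just notes that $\kappa_*$ exists with image preserving $\pi(Z\cup P)$ (the set of points of $C$ with singular fibres) and then cites Lemma \ref{exceptionalmaximal} and Proposition \ref{exceptionalinvolution} for everything else. Your additional bookkeeping (identifying $\ker(\kappa_*)=\Aut_C(X)$, and the a posteriori independence of the linear-equivalence condition from the choice of $\eta$) is consistent with, if slightly more explicit than, the paper's one-line argument.
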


\begin{proof}
	The structure morphism $\kappa$ induces a morphism of algebraic groups $\kappa_*\colon \Aut(X)\to \Aut(C)$, and an element in the image of $\kappa_*$ must preserves $\pi(Z\cup P)$, which is the set of points in $C$ having singular fibres. The rest of the statement follows from Lemma~\ref{exceptionalmaximal} and Proposition~\ref{exceptionalinvolution}.
\end{proof}

\begin{corollary}\label{exceptionalnotmax}
	Let $C$ be a curve of positive genus. Then there exist exceptional conic bundles $X\to C$ such that $\Aut(X)$ is not a maximal algebraic subgroup of\, $\Bir(C\times \PP^1)$.
\end{corollary}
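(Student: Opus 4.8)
The plan is to exhibit an explicit exceptional conic bundle $X \to C$ for which the numerical criterion of Proposition \ref{exceptionaltheorem} fails, so that $\Aut(X)$ is not maximal. By Proposition \ref{exceptionaltheorem} it suffices to construct a decomposable ruled surface $\pi\colon S = \PP(\mathcal{O}_C(D)\oplus \mathcal{O}_C)\to C$ together with finite subsets $Z$ on one section and $P$ on a disjoint section, such that $\eta\colon X\to S$ is the blow-up of $Z \cup P$ (with no two points in a common fibre) giving an exceptional conic bundle, but $-2D \not\sim \sum_{z\in \pi(Z)} z - \sum_{p\in \pi(P)} p$. Concretely, one should fix $n\geq 1$, choose $D$ with $\deg(D) = n > 0$ (so that $\seg(S) = -n$ by Lemma \ref{decomposableseg0}), take $P = \emptyset$ and $Z$ equal to $2n$ points on the section of self-intersection $n$ (the one corresponding to $\mathcal{O}_C(D)$ inside $\mathcal{O}_C(D)\oplus\mathcal{O}_C$); then Lemma \ref{exceptional} $(3)\Rightarrow(1)$ guarantees $X$ is exceptional. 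The criterion then reads $-2D \sim \sum_{z\in\pi(Z)} z$, i.e.\ $\sum_{z\in\pi(Z)} z \sim -2D$, a divisor of degree $-2n < 0$ on $C$; since $\sum_{z\in\pi(Z)} z$ is effective of degree $2n > 0$, this linear equivalence is impossible on any curve of positive genus (an effective divisor of positive degree is never linearly equivalent to one of negative degree). Hence $\Aut(X)$ is not maximal by Proposition \ref{exceptionaltheorem}.

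Actually, one can make the argument uniform in the choice of $Z$: with $P=\emptyset$ fixed and $D$ of degree $n$, no choice of $2n$ points $Z$ (with distinct images in $C$) can satisfy the criterion, for the pure degree reason above; so \emph{every} exceptional conic bundle obtained this way has non-maximal automorphism group. To complete the existence statement it remains only to check that such configurations actually exist: one needs a curve $C$ of positive genus carrying a divisor $D$ of degree $n\geq 1$ (take any such $D$, e.g.\ $n$ times a point), a section of self-intersection $n$ in $S$ (which exists precisely because $S$ is decomposable, by Lemma \ref{segrenegative} (3)), and $2n$ points with distinct images on that section (possible since the section is a copy of $C$, which is infinite). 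The blow-up $\eta\colon X\to S$ of these $2n$ points is then an exceptional conic bundle by Lemma \ref{exceptional}, and it is defined over $C$ with a curve of positive genus, as required.

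I do not expect a serious obstacle here: the statement is essentially a corollary of Proposition \ref{exceptionaltheorem} combined with the trivial observation that effective divisors of positive degree on a positive-genus curve are not principal (nor linearly equivalent to anything of nonpositive degree). The only point that requires a word of care is ensuring the $2n$ base points can be chosen with pairwise distinct images in $C$ so that Lemma \ref{exceptional} (3) genuinely applies and the resulting conic bundle is exceptional with exactly $2n$ singular fibres — but this is immediate since a section is isomorphic to $C$. If one wants the cleanest possible statement, one could alternatively invoke the case $g=1$ and note that $\mathcal{A}_0$ or a decomposable surface with $\seg = -n$ already furnishes examples; but the general construction above works verbatim for all $g\geq 1$, and I would present it that way.
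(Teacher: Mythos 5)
Your overall strategy---exhibit a configuration violating the linear-equivalence criterion of Proposition \ref{exceptionaltheorem}---is the right one and is essentially what the paper does (the paper's proof simply takes any exceptional conic bundle not of the special form and invokes Proposition \ref{exceptionaltheorem}, leaving existence implicit). But your degree argument is wrong, and the error is fatal as written. You place the $2n$ blown-up points on the section of self-intersection $+n$ and declare them to be the set $Z$, so that the criterion reads $-2D\sim\sum_{z\in\pi(Z)}z$ with a degree mismatch $-2n\neq 2n$. This rests on a mislabelling: in the paper's convention the sub-bundle $\mathcal{O}_C(D)\subset\mathcal{O}_C(D)\oplus\mathcal{O}_C$ gives the section of self-intersection $-\deg(D)=-n$ (see the Example after Proposition \ref{exceptionaltheorem}, citing \cite[Proposition 2.15]{fong}), and with $\deg(D)=n>0$ the base points of $\eta^{-1}$ lying on the \emph{positive} section constitute $P$, not $Z$. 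The criterion for your configuration is therefore $2D\sim\sum_{p\in\pi(P)}p$, where both sides have degree $2n$: there is no degree obstruction. Two sanity checks expose the problem. First, by Lemma \ref{exceptional} (3) \emph{every} exceptional conic bundle arises exactly as in your construction, so your ``uniform'' claim would show that no exceptional conic bundle has maximal automorphism group, contradicting Theorem \ref{A} (2) and the paper's explicit Example (an exceptional conic bundle over an elliptic curve with $\Aut_C(X)\simeq\mathbb{G}_m\rtimes\mathbb{Z}/2\mathbb{Z}$, maximal). Second, your argument nowhere uses $g\geq 1$ (linear equivalence preserves degree on any curve), so it would equally ``prove'' non-maximality over $\PP^1$, contradicting \cite{Blanc}.

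The construction can be repaired, but the non-maximality must come from the \emph{class}, not the degree, of the divisor $\sum_{p}\pi(p)$, and this is where positive genus enters. Keep $S=\PP(\mathcal{O}_C(D)\oplus\mathcal{O}_C)$ with $\deg(D)=n\geq 1$ and blow up $2n$ points $q_1,\dots,q_{2n}$ on the $(+n)$-section, no two in a fibre; the criterion becomes $2D\sim\sum_i\pi(q_i)$. Since $g\geq 1$, two distinct points of $C$ are never linearly equivalent, so fixing $q_2,\dots,q_{2n}$ and varying $q_1$ changes the class $\bigl[\sum_i\pi(q_i)\bigr]\in\Pic^{2n}(C)$; hence some choice makes it different from $[2D]$, and for that choice Proposition \ref{exceptionaltheorem} gives non-maximality. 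With that correction your proof is complete and in fact more explicit than the paper's.
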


\begin{proof}
	Let $X$ be an exceptional conic bundle over $C$ which is not the blowup of a decomposable ruled surface $\pi\colon \PP(\mathcal{O}_C(D)\oplus \mathcal{O}_C)\to C$ along $F=\{p_1,p_2,\ldots,p_{2\deg(D)}\}$ lying in two disjoint sections $s_1$ and $s_2$ such that $-2D$ is linearly equivalent to
	$$
	\sum_{p\in s_1\cap F} \pi(p) - \sum_{p\in s_2\cap F} \pi(p).
	$$
	By Proposition~\ref{exceptionaltheorem}, $\Aut(X)$ is not a maximal algebraic subgroup of\, $\Bir(C\times \PP^1)$. 
\end{proof}

In Proposition~\ref{exceptionaltheorem}, the morphism $\kappa_*\colon \Aut(X) \to H$ can be surjective: it is always the case either if  $C=\PP^1$ (see \cite[Lemma 4.3.3(1)]{Blanc}), or if $C$ is a curve of genus $g\geq 2$ with a trivial automorphism group. We give an example where this surjectivity fails. 

\begin{example}
	Let $C$ be an elliptic curve over $\mathbb{C}$ with neutral element $p_0$. Choose a $4$-torsion point $p_1\in C$ such that $p_2=2p_1\neq p_0$, and denote by $\Delta = \{p_0,p_1,p_2,p_3\}$ the subgroup generated by $p_1$. Define the ruled surface $\pi\colon S=\PP(\mathcal{O}_C(D)\oplus \mathcal{O}_C)\to C$, where $D = p_0+p_1$. The line subbundle $\mathcal{O}_C(D) \subset \mathcal{O}_C(D)\oplus \mathcal{O}_C$ corresponds to a section of self-intersection $-\deg(D)=-2$ (see \cite[Proposition 2.15]{fong}). By Lemma~\ref{segrenegative}\eqref{segreneg-1}, it follows that $\sigma$ is the unique section of $\pi$ with negative self-intersection, and therefore $\seg(S)=-2$. Let $s_1',s_2'$ be two disjoint sections with $s_1'^2 = -2$ and $s_2'^2=2$. Denote by $\eta\colon X\to S$ the blowup of $s'_2\cap \pi^{-1}(\Delta)$, and by $s_1,s_2$ the strict transforms of $s_1'$ and $s_2'$ by $\eta$. Then $\kappa = \pi \eta$ is a conic bundle. Moreover, $$(p_0+p_1+p_2+p_3)-(2D) = -p_0-p_1+p_2+p_3=-(p_1-p_0)+(p_2-p_0)+(p_3-p_0) \sim 0$$
	implies that $(-2D) \sim -(p_0+p_1+p_2+p_3)$, and it follows that $\Aut(X)$ is maximal with $\Aut_C(X) \simeq \mathbb{G}_m\rtimes \mathbb{Z}/2\mathbb{Z}$  (see Proposition~\ref{exceptionaltheorem}).
	
	Let $f\in \Aut(C)$ be the translation $x\mapsto x+p_1$ which preserves $\Delta$; \textit{i.e.}\ $f\in H$. Denote by $\widetilde{H}$ the subgroup of $\Aut(X)$ which fixes $s_1$ and $s_2$. Notice that $\eta$ is $\widetilde{H}$-equivariant and the following diagram is commutative:
	\[
	\begin{tikzcd} [sep = 2em,/tikz/column 1/.style={column sep=0.1em},/tikz/column 2/.style={column sep=0.1em}]
		\Aut(X)\arrow[rrdd,"\kappa_*" swap] & \supset & \widetilde{H} \arrow[dd,"\kappa_*"]\arrow[rd,"\eta_*"] \\
		& &  & \Aut(S)\arrow[ld,"\pi_*"]\\
		\Aut(C) & \supset & H\rlap{.}
	\end{tikzcd}
	\] 
	Assume that $f\in \Aut(C)$ lifts to an element of $\Aut(X)$; then it can also be lifted in $\widetilde{H}$ (if the lifting permutes $s_1$ and $s_2$, compose it with the non-trivial involution to get an element in $\widetilde{H})$, and \textit{a fortiori} can be lifted in $\Aut(S)$. This is not the case because $f^*(D)=p_0+p_3$ is not linearly equivalent to $D$ .
\end{example}

\subsection{$\boldsymbol{(\mathbb{Z}/2\mathbb{Z})^2}$-conic bundles}

The key result in this section is Proposition~\ref{key}. We will also need \cite[Lemmas 4.4.1, 4.4.3, 4.4.4]{Blanc}. Their proofs are left as exercises in the original article. For the sake of self-containedness, we re-prove them below (see Lemmas~\ref{involution},~\ref{involutionnormalizer},~\ref{determinantinvolution}).

\begin{lemma}[{\textit{cf.}}~\protect{\cite[Lemma 4.4.1]{Blanc}}]\label{involution}
	Let $C$ be a curve. Every element of order two in $\operatorname{PGL}(2,\kk(C))$ is conjugate to an element of the form 
	$\sigma_f = \begin{bsmallmatrix}
		0 & f \\ 1 & 0
	\end{bsmallmatrix}$, 
	where $f\in \kk(C)^*$. Moreover, $\sigma_f$ and $\sigma_g $ are conjugate if and only if $f/g$ is a square in $\kk(C)^*$.
\end{lemma}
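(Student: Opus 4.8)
The plan is to prove the two assertions separately, working entirely inside $\operatorname{PGL}(2,\kk(C))$, where we may think of elements as Möbius transformations over the field $\kk(C)$ (note $\operatorname{char}(\kk)\neq 2$ guarantees the usual diagonalization arguments work). For the first assertion, take an element $\tau\in\operatorname{PGL}(2,\kk(C))$ of order two and lift it to a matrix $M\in\operatorname{GL}(2,\kk(C))$ with $M^2=\lambda I$ for some $\lambda\in\kk(C)^*$ (this $\lambda$ exists because $M^2$ represents the identity in $\operatorname{PGL}$). Rescaling $M$ does not change $\tau$ but multiplies $\lambda$ by a square, so the class of $\lambda$ in $\kk(C)^*/(\kk(C)^*)^2$ is an invariant. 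If $\lambda$ is a square we may assume $M^2=I$, so $M$ is diagonalizable over $\kk(C)$ with eigenvalues $\pm1$; since $\tau\neq\operatorname{id}$, after conjugation $M=\operatorname{diag}(1,-1)$, and one checks this represents $\sigma_1=\begin{bmatrix}0&1\\1&0\end{bmatrix}$ in $\operatorname{PGL}(2,\kk(C))$ after a further conjugation by $\begin{bmatrix}1&1\\1&-1\end{bmatrix}$. If $\lambda$ is not a square, the characteristic polynomial of $M$ has no root in $\kk(C)$ unless $M$ is already of the desired shape; the cleaner route is to observe directly that $M$ is conjugate to its rational canonical form $\begin{bmatrix}0&-\det M\\1&\operatorname{tr}M\end{bmatrix}$, and $\operatorname{tr}M=0$ because $M^2=\lambda I$ together with Cayley--Hamilton $M^2-(\operatorname{tr}M)M+(\det M)I=0$ forces $(\operatorname{tr}M)M=(\lambda+\det M)I$, which (since $M$ is not scalar) gives $\operatorname{tr}M=0$. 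Hence $M$ is conjugate to $\begin{bmatrix}0&-\det M\\1&0\end{bmatrix}$, i.e. $\tau$ is conjugate to $\sigma_f$ with $f=-\det M$.

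For the second assertion, suppose $\sigma_f$ and $\sigma_g$ are conjugate in $\operatorname{PGL}(2,\kk(C))$. Computing $\det(\sigma_f)=-f$ in $\operatorname{GL}(2,\kk(C))$, and noting that conjugation multiplies the determinant by a square, we get that $-f$ and $-g$ have the same class in $\kk(C)^*/(\kk(C)^*)^2$, hence so do $f$ and $g$, i.e. $f/g$ is a square. Conversely, if $f=h^2g$ for some $h\in\kk(C)^*$, conjugating $\sigma_f$ by $\operatorname{diag}(h,1)$ gives $\begin{bmatrix}h&0\\0&1\end{bmatrix}\begin{bmatrix}0&f\\1&0\end{bmatrix}\begin{bmatrix}h^{-1}&0\\0&1\end{bmatrix}=\begin{bmatrix}0&hf\\h^{-1}&0\end{bmatrix}$, which in $\operatorname{PGL}$ equals $\begin{bmatrix}0&h^2f\cdot f^{-1}\cdot f\\1&0\end{bmatrix}$... more directly, $\begin{bmatrix}0&hf\\h^{-1}&0\end{bmatrix}=h^{-1}\begin{bmatrix}0&h^2f\\1&0\end{bmatrix}$ represents $\sigma_{h^2f}$; choosing instead to conjugate by $\operatorname{diag}(h^{-1},1)$ one lands on $\sigma_{h^{-2}f}=\sigma_g$. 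So the classes match up exactly.

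The main obstacle is the first assertion in the non-square case: one must be careful that $M$ genuinely has trace zero and is conjugate (over $\kk(C)$, not just a quadratic extension) to the anti-diagonal form. The Cayley--Hamilton argument above handles this cleanly once one knows $M$ is not scalar, which holds because $\tau\neq\operatorname{id}$. A secondary subtlety is bookkeeping the passage between $\operatorname{GL}$ and $\operatorname{PGL}$: every identity should be checked up to a scalar in $\kk(C)^*$, and the determinant invariant should be tracked in $\kk(C)^*/(\kk(C)^*)^2$ throughout, exactly the group in which $\det$ is defined in the preceding definition. No deeper input about the geometry of $C$ is needed; everything is linear algebra over the field $\kk(C)$.
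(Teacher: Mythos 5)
Your proof is correct and follows essentially the same route as the paper: the first assertion amounts to saying that a non-scalar involution is conjugate to the companion matrix of its (trace-zero) characteristic polynomial, which the paper obtains in one line by taking the basis $\{v,\sigma(v)\}$ for any $v$ with $\sigma(v)\neq v$, and the second assertion is exactly the determinant-mod-squares computation you give, with the same explicit diagonal conjugation for the converse. The only remark is that your case split (and the appeal to $\operatorname{char}(\kk)\neq 2$ for diagonalization in the square case) is unnecessary, since your Cayley--Hamilton/rational-canonical-form argument already handles both cases uniformly and the lemma is stated without any characteristic hypothesis.
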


\begin{proof}
	Let $\sigma \in \PGL(2,\kk(C))$ be an element of order two, and let $v\in \PP^1$ be such that $\sigma(v) \neq v$. Since $\sigma$ is of order two, there exists an $f\in \kk(C)^*$ such that the matrix of $\sigma$ with respect to the basis $\{v,\sigma(v)\}$ is 
	$\sigma_f = \begin{bsmallmatrix}
		0 & f \\ 1 & 0
	\end{bsmallmatrix}$. Let $f,g\in \kk(C)^*$. Assume $\sigma_f$ and $\sigma_g$ are conjugate. Let $\widetilde{\sigma_f},\widetilde{\sigma_g}$ be their respective representatives in $\operatorname{GL}(2,\kk(C))$ having $1$ as the lower left coefficient. Then there exist  $\lambda \in \kk(C)$, $P\in \operatorname{GL}(2,\kk(C))$ such that $P\widetilde{\sigma_f}P^{-1}=\lambda \widetilde{\sigma_g}$. Taking the determinant in the last equality gives $f/g = \lambda^2$. Conversely, assume that $f/g=\lambda^2$ for some $\lambda \in \kk(C)^*$. Then 
	\begin{equation*}\pushQED{\qed} 
	\begin{bmatrix}
		1 & 0 \\ 0 & \lambda^{-1}
	\end{bmatrix}
	\cdot \sigma_g \cdot 
	\begin{bmatrix}
		1 & 0 \\ 0 & \lambda
	\end{bmatrix} = 
	\sigma_f.\qedhere \popQED
	\end{equation*}
\renewcommand{\qed}{}        
\end{proof}

\begin{lemma}[{\textit{cf.}}~\protect{\cite[Lemma 4.4.4]{Blanc}}]\label{involutionnormalizer}
	Assume that $\operatorname{char}(\kk)\neq2$. Let $C$ be a curve. Let $\sigma=\begin{bsmallmatrix} 0& f \\ 1 & 0 \end{bsmallmatrix}\in \operatorname{PGL}(2,\kk(C))$, where $f\in \kk(C)^*$ is not a square. Let $N_\sigma$ be the normalizer of\, $\langle\sigma\rangle$ in $\operatorname{PGL}(2,\kk(C))$. Then 
	$$
	N_\sigma = \left\{\begin{bmatrix} a &bf \\ b&a\end{bmatrix}\mathrel{}\middle|\mathrel{} a,b\in \kk(C)\right\} \cup \left\{\begin{bmatrix} a & -bf \\ b&-a\end{bmatrix}\mathrel{}\middle|\mathrel{} a,b\in \kk(C)\right\} \simeq N^\circ_\sigma \rtimes \mathbb{Z}/2\mathbb{Z},
	$$
	where $N^\circ_\sigma=\left\{\begin{bsmallmatrix} a &bf \\ b&a\end{bsmallmatrix}\,\middle| \,a,b\in \kk(C)\right\} $
        is isomorphic to $\kk(C)[\sqrt{f}]^*/\kk(C)^*$ via the group homomorphism $\begin{bsmallmatrix} a &bf \\ b & a \end{bsmallmatrix} \mapsto [a+b\sqrt{f}]$, and $\mathbb{Z}/2\mathbb{Z}$ is generated by the diagonal involution. The action of $\mathbb{Z}/2\mathbb{Z}$ on $N_\sigma^\circ$ sends $\begin{bsmallmatrix} a &bf \\ b & a \end{bsmallmatrix}$ onto $\begin{bsmallmatrix} a &-bf \\ -b & a \end{bsmallmatrix}$. 
\end{lemma}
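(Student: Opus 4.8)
The plan is to first observe that $N_\sigma$ is simply the centralizer of $\sigma$ in $\operatorname{PGL}(2,\kk(C))$: an element $g$ normalizes $\langle\sigma\rangle=\{1,\sigma\}$ if and only if $g\sigma g^{-1}\in\{1,\sigma\}$, and since $g\sigma g^{-1}$ is again of order two it cannot be the identity, so $g\sigma g^{-1}=\sigma$. With this reduction the computation becomes pure linear algebra over the field $\kk(C)$. Writing a representative $g=\left[\begin{smallmatrix}a&b\\c&d\end{smallmatrix}\right]\in\operatorname{GL}(2,\kk(C))$, the relation $g\sigma=\lambda\sigma g$ for some $\lambda\in\kk(C)^*$ becomes, after comparing the four entries, the system $b=\lambda cf$, $a=\lambda d$, $d=\lambda a$, $cf=\lambda b$. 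Since $\det g=ad-bc\neq0$ forces $(a,b)\neq(0,0)$, substituting gives $\lambda^2=1$. The case $\lambda=1$ produces exactly the matrices $\left[\begin{smallmatrix}a&bf\\b&a\end{smallmatrix}\right]$ and the case $\lambda=-1$ the matrices $\left[\begin{smallmatrix}a&-bf\\b&-a\end{smallmatrix}\right]$; in both cases $\det g=\pm(a^2-b^2f)$, which is nonzero precisely when $(a,b)\neq(0,0)$ because $f$ is not a square. This establishes the displayed description of $N_\sigma$.

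Next I would identify the subgroup $N_\sigma^\circ$ with $\kk(C)[\sqrt f]^*/\kk(C)^*$. The point is that multiplying $\left[\begin{smallmatrix}a&bf\\b&a\end{smallmatrix}\right]$ and $\left[\begin{smallmatrix}a'&b'f\\b'&a'\end{smallmatrix}\right]$ yields $\left[\begin{smallmatrix}aa'+bb'f&(ab'+a'b)f\\ab'+a'b&aa'+bb'f\end{smallmatrix}\right]$, which is precisely the matrix attached to $(a+b\sqrt f)(a'+b'\sqrt f)=(aa'+bb'f)+(ab'+a'b)\sqrt f$; hence $\left[\begin{smallmatrix}a&bf\\b&a\end{smallmatrix}\right]\mapsto[a+b\sqrt f]$ is a group homomorphism $N_\sigma^\circ\to\kk(C)[\sqrt f]^*/\kk(C)^*$, well defined because rescaling the matrix rescales $a+b\sqrt f$ by the same scalar. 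It is surjective since every class has such a representative, and its kernel is the set of matrices with $b=0$, i.e.\ the scalar matrices, which are trivial in $\operatorname{PGL}(2,\kk(C))$; so it is an isomorphism.

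Finally, for the semidirect product decomposition I would use the diagonal involution $\delta=\left[\begin{smallmatrix}1&0\\0&-1\end{smallmatrix}\right]$: it lies in the second coset (take $a=1$, $b=0$), it is non-scalar hence of order two in $\operatorname{PGL}(2,\kk(C))$, and $\left[\begin{smallmatrix}a&bf\\b&a\end{smallmatrix}\right]\delta=\left[\begin{smallmatrix}a&-bf\\b&-a\end{smallmatrix}\right]$ shows the second coset equals $N_\sigma^\circ\delta$; since $\delta\notin N_\sigma^\circ$, this gives $N_\sigma=N_\sigma^\circ\rtimes\langle\delta\rangle$ with $\langle\delta\rangle\simeq\mathbb{Z}/2\mathbb{Z}$. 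The conjugation action is the immediate computation $\delta\left[\begin{smallmatrix}a&bf\\b&a\end{smallmatrix}\right]\delta^{-1}=\left[\begin{smallmatrix}a&-bf\\-b&a\end{smallmatrix}\right]$, which under the isomorphism above corresponds to the nontrivial automorphism $a+b\sqrt f\mapsto a-b\sqrt f$ of $\kk(C)[\sqrt f]/\kk(C)$. No step here is a genuine obstacle; the only thing requiring care throughout is that we work in $\operatorname{PGL}$, not $\operatorname{GL}$ — matrices are defined only up to a scalar $\lambda$, which is exactly what forces $\lambda^2=1$ above and what makes the homomorphism to $\kk(C)[\sqrt f]^*/\kk(C)^*$ well defined and $\delta$ an element of order two.
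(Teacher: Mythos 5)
Your proposal is correct and follows essentially the same route as the paper: reduce the normalizer of $\langle\sigma\rangle$ to the centralizer of $\sigma$, compute the commuting matrices in $\operatorname{PGL}(2,\kk(C))$ up to the scalar $\lambda$ (forcing $\lambda^2=1$ and yielding the two cosets), identify $N_\sigma^\circ$ with $\kk(C)[\sqrt f]^*/\kk(C)^*$ via the multiplication formula, and split off the diagonal involution to get the semidirect product with the stated conjugation action. The only difference is that you write out explicitly the centralizer computation that the paper labels ``straightforward,'' and you additionally verify injectivity and surjectivity of the homomorphism to $\kk(C)[\sqrt f]^*/\kk(C)^*$.
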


\begin{proof}
	Since $\sigma$ has order two, the normalizer of $\langle\sigma\rangle$ equals the centralizer of $\langle \sigma \rangle$. Then it is a straightforward computation in $\operatorname{PGL}(2,\kk(C))$ to check that matrices commuting with $\sigma$ are of the form $\begin{bsmallmatrix} a & bf \\ b & a\end{bsmallmatrix}$ or $\begin{bsmallmatrix} a & -bf \\ b & -a\end{bsmallmatrix}$ for some $a,b\in \kk(C)$, and $N_\sigma^\circ$ is a normal subgroup of $N_\sigma$. Since $N_\sigma^\circ \cap \left\{I_2,\begin{bsmallmatrix} 1 & 0 \\ 0 & -1\end{bsmallmatrix}\right\}=\{I_2\}$ and $N_\sigma^\circ \cdot \left\{I_2,\begin{bsmallmatrix} 1 & 0 \\ 0 & -1\end{bsmallmatrix}\right\}=N_\sigma$, it follows that $N_\sigma \simeq N^\circ_\sigma \rtimes \mathbb{Z}/2\mathbb{Z}$. 
	For all $a_1,a_2,b_1,b_2\in \kk(C)$,
	$$
	\begin{bmatrix}
		a_1 & b_1f \\ b_1 & a_1
	\end{bmatrix}
	\cdot
	\begin{bmatrix}
		a_2 & b_2f \\ b_2 & a_2
	\end{bmatrix} = 
	\begin{bmatrix}
		a_1a_2+b_1b_2f & (a_1b_2 + a_2b_1)f \\ a_2b_1 + a_1b_2 & a_1a_2+b_1b_2f
	\end{bmatrix}
	$$
	and $(a_1+b_1\sqrt{f})(a_2+b_2\sqrt{f})=(a_1a_2+b_1b_2f)+(a_1b_2+a_2b_1)\sqrt{f}$. Hence $N_\sigma^\circ$ is isomorphic to $\kk(C)[\sqrt{f}]^*/\kk(C)^*$ via $\begin{bsmallmatrix} a &bf \\ b & a \end{bsmallmatrix} \mapsto [a+b\sqrt{f}]$. Finally, 
	$$
	\begin{bmatrix}
		1 & 0 \\0 & -1
	\end{bmatrix}
	\cdot  \begin{bmatrix} a &bf \\ b&a\end{bmatrix} \cdot
	\begin{bmatrix}
		1 & 0 \\0 & -1
	\end{bmatrix} = 
	\begin{bmatrix} a &-bf \\ -b&a\end{bmatrix}; 
	$$
	\textit{i.e.}\ the action of $\mathbb{Z}/2\mathbb{Z}$ on $N_\sigma^\circ$ sends $\begin{bsmallmatrix} a &bf \\ b & a \end{bsmallmatrix}$ onto $\begin{bsmallmatrix} a &-bf \\ -b & a \end{bsmallmatrix}$.
\end{proof}

The following key proposition is an analogue of \cite[Proposition 5.2.2]{Blanc} for non-rational ruled surfaces. We prove it by copying, \textit{mutatis mutandis}, the proof of \cite[Proposition 5.2.2]{Blanc}. 

\begin{proposition}[{\textit{cf.}}~\protect{\cite[Proposition 5.2.2]{Blanc}}]\label{key}
	Assume that $\operatorname{char}(\kk)\neq2$. Let $C$ be a curve of positive genus, and let $G$ be a finite subgroup of\, $\operatorname{Bir}(C\times \PP^1)=\operatorname{PGL}(2,\kk(C))\rtimes \Aut(C)$. Denote by $G'\subset G$ and $H\subset \Aut(C)$ the kernel and the image of the action of $G$ on the base of the fibration. Then the following hold:
	\begin{enumerate}
		\item\label{key-1} If $G'=\{1\}$, then $G$ is conjugate to $H$ in $\operatorname{Bir}(C\times \PP^1)$.
		\item\label{key-2} If $G' \simeq \mathbb{Z}/2\mathbb{Z}$ is generated by an involution with a non-trivial determinant, then $G$ normalizes a group $V\subset \operatorname{PGL}(2,\kk(C))$ isomorphic to $ (\mathbb{Z}/2\mathbb{Z})^2$ and containing $G'$.
	\end{enumerate}	
\end{proposition}

\begin{proof}
	By Tsen's theorem, $\kk(C)$ is a $C_1$-field. Then $H^1(H,\operatorname{GL}(2,\kk(C)))$ and $H^2(H,\kk(C)^*)$ are trivial (see \cite[Propositions X.3, X.10 and X.11]{Serre}), and this implies that $H^1(H,\operatorname{PGL}(2,\kk(C)))$ is also trivial.
	\begin{enumerate}[wide]
		\item If $G'=\{1\}$, then $G$ is isomorphic to $H$, and there exists a section $s\colon H\to G$. Let $s_{c}$ be the homomorphism $H \to \operatorname{PGL}(2,\kk)\rtimes H$, $h\mapsto (1,h)$ and $f$ be the homomorphism $H\to \operatorname{Bir}(C\times \PP^1)$, $h\mapsto s(h)s_{c}(h)^{-1}$. Denote by $\pi\colon \operatorname{Bir}(C\times \PP^1)\to \Aut(C)$ the projection onto $\Aut(C)$. For all $h\in H$, $\pi f (h)= \pi(s(h)) \pi(s_{c}(h)^{-1}) = 1$; \textit{i.e.}\ there exists a homomorphism $\tilde{f}\colon H \to \operatorname{PGL}(2,\kk(C))$, such that $f(h)=(\tilde{f}(h),1)$. In particular, $s(h)=(\tilde{f}(h),h)$. For all $h_1,h_2\in H$, $(\tilde{f}(h_1h_2),1) = f(h_1h_2)=s(h_1)s(h_2)s_c(h_2)^{-1}s_c(h_1)^{-1}=(\tilde{f}(h_1)h_1\cdot \tilde{f}(h_2),1)$; \textit{i.e.}\ $\tilde{f}$ is a cocycle. Since $H^1(H,\operatorname{PGL}(2,\kk(C)))$ is trivial, $f$ is conjugate to the trivial cocycle; this implies that $s$ and $s_c$ are conjugate up to an element of $\operatorname{PGL}(2,\kk(C))$. Thus $G$ and $H$ are conjugate.
		\item Let $\sigma$ be the element of order two of $G'$. From Lemma~\ref{involution}, we can assume up to conjugation that $\sigma=\begin{bsmallmatrix} 0&f\\1&0\end{bsmallmatrix}$ for some $f\in \kk(C)^*$ which is not a square (by assumption, the determinant of $\sigma$ is not trivial). We denote by $N_\sigma$ the normalizer of $\sigma$. By Lemma~\ref{involutionnormalizer}, $N_\sigma \simeq N_\sigma^\circ \rtimes \mathbb{Z}/2\mathbb{Z}$ with $N_\sigma^\circ$ isomorphic to $\kk(C)[\sqrt{f}]^*/\kk(C)^*$, and $\mathbb{Z}/2\mathbb{Z}$ acts on $N_\sigma^\circ$ by sending $\begin{bsmallmatrix} a & bf \\ b & a\end{bsmallmatrix}$ onto $\begin{bsmallmatrix} a & -bf \\ -b & a\end{bsmallmatrix}$. 
		
		All elements of the form $\begin{bsmallmatrix} a & -bf \\ b & -a\end{bsmallmatrix}$ have order two in $\operatorname{PGL}(2,\kk(C))$: the goal is to find one of them which  with $\sigma$ generates a subgroup $V$ isomorphic to $(\mathbb{Z}/2\mathbb{Z})^2$ and normalized by $G$.
		
		Let $h\in H$. Then $\pi^{-1}(h) \cap G=\{(\gamma, h),(\sigma \gamma,h)\}$ for some $\gamma \in \operatorname{PGL}(2,\kk(C))$. The element $\sigma$ has order two, and $G'$ is normal in $G$; this implies that $(\sigma,1)$ is in the centre of $G$. In particular, $(\gamma,h)(\sigma,1)(h^{-1}\cdot \gamma^{-1},h^{-1})=(\sigma,1)$, and it follows that $\gamma (h\cdot \sigma) \gamma^{-1} = \sigma$. By Lemma~\ref{involution}, there exists a $\mu\in \kk(C)^*$ such that $\mu^2 = f/(h\cdot f)$. Let $\beta = \begin{bsmallmatrix} \mu & 0 \\ 0 & 1 \end{bsmallmatrix}$ and $\alpha=\gamma \beta^{-1}$. Then $\beta (h\cdot \sigma) \beta^{-1} = \sigma$ and $\alpha \sigma \alpha^{-1} = \sigma$. Therefore, $\alpha \in N_\sigma$, and replacing $\mu$ with $-\mu$ if needed, we can assume that $\alpha \in N_\sigma^\circ$. Under this last further condition, $\mu$ and $\alpha^2$ are uniquely determined by $h$ since $(\sigma \alpha)^2 = \alpha^2$. By associating $h$ to $\rho_h=\alpha^2$ and $\mu_h=\mu$, this yields the following well-defined maps:
		\begin{align*}
			\rho \colon H & \longrightarrow N^\circ_\sigma & \mu\colon H &\longrightarrow \kk(C)^* \\
			h& \longmapsto \rho_h, & h&\longmapsto \mu_h.
		\end{align*}
		
		We show that $\mu$ is a cocycle and $\rho$ is also a cocycle after conjugating by some element of $\operatorname{PGL}(2,\kk(C))$. Let $h_1,h_2 \in H$ and $h_3=h_1h_2$. For $i\in \{1,2,3\}$, choose as previously $(\alpha_i\beta_i,h_i)\in G$, where $\alpha_i\in N^\circ_\sigma$, $\beta_i = \begin{bsmallmatrix} \mu_i & 0 \\ 0 & 1 \end{bsmallmatrix}$, $\mu_i^2 = f/(h_i\cdot f)$. We can also choose $\alpha_3\beta_3$ such that $(\alpha_1\beta_1,h_1)(\alpha_2\beta_2,h_2)= (\alpha_3\beta_3,h_3)$, which implies that 
		\begin{equation}\label{eq:cocycle}
			\alpha_3=\alpha_1\beta_1(h_1\cdot (\alpha_2\beta_2))\beta_3^{-1}=\alpha_1 (\beta_1(h_1\cdot \alpha_2)\beta_1^{-1}) \beta_1(h_1\cdot \beta_2)\beta_3^{-1}. \tag{$\dagger$}
		\end{equation}
		Writing explicitly $\alpha_2 = \begin{bsmallmatrix}a & bf \\ b & a\end{bsmallmatrix}$, it follows that
                  \[\beta_1(h_1\cdot \alpha_2) \beta_1^{-1} = \begin{bmatrix} \mu_1(h_1\cdot a) &(h_1\cdot b)f \\h\cdot b & \mu_1(h_1\cdot a)\end{bmatrix}\in N_\sigma^\circ.\]
                  Then $\beta_1(h_1\cdot \beta_2)\beta_3^{-1}\in N_\sigma^\circ$, which is a diagonal matrix and thus equals the identity. This implies that $\mu_3 = \mu_1(h_1\cdot \mu_2)$; \textit{i.e.}\ $\mu$ is a cocycle. The group $H^1(H,\kk(C)^*)$ is trivial (see \cite[Propositions X.10 and X.11]{Serre}), so there exists a $\nu\in \kk(C)^*$ such that $\mu_h=\nu/(h\cdot \nu)$ for all $h\in H$. Then $f/\nu^2$ is $H$-invariant. Conjugating $G$ by $\begin{bsmallmatrix} 1 & 0 \\ 0 & \nu \end{bsmallmatrix}$, we can assume that $f$ is $H$-invariant, which is equivalent to $\mu_h =1$ for all $h\in H$.
		From the equation (\ref{eq:cocycle}), it follows that $\alpha_3= \alpha_1(h_1\cdot \alpha_2)$, which implies that $\rho$ is a cocycle.
		
		The $H$-equivariant exact sequence $1 \to \kk(C)^* \to \kk(C)[\sqrt{f}]^* \to N^\circ_\sigma \to 1$ with the equalities $H^1(H,\kk(C)[\sqrt{f}]^*)=\{1\}$ and $H^2(H,\kk(C)^*)=\{1\}$ (see \cite[Propositions X.10 and X.11]{Serre}) imply that $H^1(H,N_\sigma^\circ)=\{1\}$. 
		Therefore, $\rho$ is conjugate to the trivial cocycle; \textit{i.e.}\ there exists a $\tau\in N_\sigma^\circ$ such that $\rho_h = \tau\cdot {(h\cdot \tau)}^{-1}$ for all $h\in H$. The element $T=(\tau,-1)\in N_\sigma^\circ\rtimes \mathbb{Z}/2\mathbb{Z}$ has order two and is different from $\sigma$ (because $\sigma\in N_\sigma^\circ$). Moreover, every element of $G$ is of the form $((\alpha,1),h)$ with $\alpha\in N_\sigma^\circ$ and $h\in H$ such that $\alpha^2=\rho_h$, and $((\alpha,1),h)(T,1)(h^{-1}\cdot (\alpha^{-1},1),h^{-1})=((\alpha,1)(h\cdot \tau,-1)(\alpha^{-1},1),1)=((\alpha^2(h\cdot \tau),-1),1)=(T,1)$ in~$G$. The subgroup generated by $\sigma$ and $(T,1)$ is normalized by $G$ and is isomorphic to $(\mathbb{Z}/2\mathbb{Z})^2$.
\qedhere	\end{enumerate}
\end{proof}

Under the assumptions of Proposition~\ref{key}, the automorphism group of a conic bundle $X$ such that $\Aut_C(X)\simeq \mathbb{Z}/2\mathbb{Z}$ is not maximal. Below, we see that a conic bundle $X$ such that $\Aut_C(X)\simeq (\mathbb{Z}/2\mathbb{Z})^2$ is always a $(\mathbb{Z}/2\mathbb{Z})^2$-conic bundle and has a maximal automorphism group (Lemmas~\ref{Z/2Z} and~\ref{Z/2Z^2max}).

\begin{lemma}\label{involutionsections}
	Assume that $\operatorname{char}(\kk)\neq 2$. Let $C$ be a curve, and let $\kappa\colon X\to C$ be a conic bundle having at least one singular fibre. Suppose there exists a non-trivial involution $f\in \Aut_C(X)$ fixing pointwise two sections $s_1$ and $s_2$. Then in each singular fibre, the sections $s_1$ and $s_2$ pass through different irreducible components.
\end{lemma}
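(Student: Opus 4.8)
The plan is to argue by contradiction, assuming that in some singular fibre $F=\kappa^{-1}(p)=E\cup E'$ (with node $q=E\cap E'$) both sections meet the \emph{same} component, say $E$; write $a_i=s_i\cap E$. The first step is to collect two elementary facts. A section of a conic bundle never passes through the node of a singular fibre: since $F$ is numerically equivalent to a general fibre, $s_i\cdot F=1$, but if $q\in s_i$ then $(s_i\cdot E)_q\ge 1$ and $(s_i\cdot E')_q\ge 1$, whence $s_i\cdot F=s_i\cdot E+s_i\cdot E'\ge 2$. Thus $a_1,a_2$ lie in the smooth locus of $E$; in particular $a_i\notin E'$ and $s_i$ meets $E$ transversally at $a_i$. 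The second fact is that, since $\operatorname{char}(\kk)\ne 2$ and $f$ is a non-trivial involution of the smooth surface $X$, the fixed locus $\operatorname{Fix}(f)$ is smooth; as $f\neq\operatorname{id}$ it is at most $1$-dimensional, hence a disjoint union of smooth curves and isolated points. Since $\operatorname{Fix}(f)$ contains the distinct irreducible curves $s_1$ and $s_2$, it follows that $s_1\cap s_2=\emptyset$, and in particular $a_1\ne a_2$.

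Next I would determine how $f$ acts on $F$. Since $f\in\Aut_C(X)$ it preserves the fibre $F$, hence permutes $\{E,E'\}$; because $f$ fixes $a_1\in E\setminus E'$, it cannot interchange $E$ and $E'$, so $f(E)=E$, $f(E')=E'$ and $f(q)=q$. Therefore $f|_E$ is an automorphism of $E\cong\PP^1$ fixing the three pairwise distinct points $q,a_1,a_2$, so $f|_E=\operatorname{id}_E$, i.e.\ $E\subseteq\operatorname{Fix}(f)$. But then $\operatorname{Fix}(f)$ contains the two distinct irreducible curves $E$ and $s_1$, which meet at the point $a_1$; this contradicts the smoothness of $\operatorname{Fix}(f)$. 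Hence the assumption is untenable, and since a section always meets a singular fibre in the smooth locus of exactly one of its two components, $s_1$ and $s_2$ pass through distinct components of every singular fibre.

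The only subtle ingredients are the two properties of $\operatorname{Fix}(f)$ in characteristic $\ne 2$, namely its smoothness and the ensuing disjointness of $s_1$ and $s_2$; these are standard for involutions of smooth varieties over a field of characteristic not $2$ (linearization of the action near a fixed point). Everything else reduces to an intersection-number computation and the observation that an automorphism of $\PP^1$ fixing three points is the identity. In the write-up one should also make explicit the implicit hypothesis $s_1\ne s_2$, without which the statement is vacuous.
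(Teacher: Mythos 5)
Your proof is correct, but it takes a genuinely different route from the paper. The paper contracts the irreducible component of the offending singular fibre that does not meet $s_1,s_2$, obtaining an $f$-equivariant local model $\kappa^{-1}(U)\to U\times\PP^1$; it then writes the induced involution in a chart as $(x,[u:v])\mapsto(x,[au+bv:cu-av])$ (a traceless representative, using that it has order two), observes that it fixes three distinct points of the fibre over $p$ (the contracted point and the two section points), hence restricts to the identity there, which forces $b(p)=c(p)=0$ and $a(p)=-a(p)\neq 0$ --- the contradiction with $\operatorname{char}(\kk)\neq 2$. You avoid the contraction entirely and instead inject $\operatorname{char}(\kk)\neq 2$ through the smoothness of the fixed locus of an involution on a smooth variety: this gives you $s_1\cap s_2=\emptyset$ for free, and then the ``three fixed points on $\PP^1$'' argument on the component $E$ itself yields $E\subseteq\operatorname{Fix}(f)$ and a nodal (hence non-smooth) fixed locus at $a_1$. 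Both arguments ultimately rest on the same pivot (an automorphism of $\PP^1$ fixing three distinct points is the identity), but yours makes explicit two points the paper leaves implicit --- that a section avoids the node, and that the two section points on $E$ are distinct (the paper simply asserts ``three distinct points''); your smoothness observation is precisely what guarantees the latter. The trade-off is that the paper's computation is entirely self-contained in charts, whereas you import the (standard, but nontrivial) fact that fixed loci of tame involutions are smooth; if you use this route in a write-up, cite or prove that fact, and, as you note, state the hypothesis $s_1\neq s_2$ explicitly.
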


\begin{proof}
	Assume there is a singular fibre $\kappa^{-1}(p)$ where $s_1$ and $s_2$ pass through the same irreducible component. Since $f$ fixes pointwise $s_1$ and $s_2$, the contraction of the other irreducible component gives an $f$-equivariant birational morphism $\eta\colon \kappa^{-1}(U) \to U\times \PP^1$, where $U$ is an open neighbourhood of $p$. The $C$-automorphism $\eta f \eta^{-1}$ has order two, which implies that there exist $a,b,c\in \mathcal{O}_C(U)$ such that 
	\begin{equation}\label{eq:notidentity}
		\eta f \eta^{-1}:(x,[u:v])\mapsto (x,[au+bv:cu-av]). \tag{$\star$}
	\end{equation}
	On the other hand, $\eta f \eta^{-1}$ fixes the point contracted by $\eta$ and the sections $\eta(s_1)$ and $\eta(s_2)$. In particular, it fixes three distinct points in the fibre $p_1^{-1}(p)$, where $p_1\colon U\times \PP^1\to U$ denotes the first projection. Therefore, $\eta f \eta^{-1}_{|p_1^{-1}(p)}$ equals the identity. It follows from (\ref{eq:notidentity}) that $b(p)=c(p)=0$ and $a(p)=-a(p)\neq 0$, which gives a contradiction since $\operatorname{char}(\kk)\neq 2$.
\end{proof}

\begin{lemma}[{\textit{cf.}}~\protect{\cite[Lemma 4.4.3]{Blanc}}]\label{determinantinvolution}
	Let $C$ be a curve. Let $\sigma\in \operatorname{PGL}(2,\kk(C))$ be a non-trivial involution with $\det(\sigma)\in \kk(C)^*/(\kk(C)^*)^2$.
	\begin{enumerate}
		\item\label{detinv-1} If\, $\det(\sigma) = 1$, then $\sigma$ is diagonalizable and fixes pointwise two sections.
		\item\label{detinv-2} If\, $\det(\sigma)\neq 1$, then $\sigma$ is not diagonalizable and fixes pointwise an irreducible curve which is birational to a $2$-to-$1$ cover of\, $C$ ramified above an even positive number of points.
	\end{enumerate}
\end{lemma}

\begin{proof}
	\begin{enumerate}[wide]
		\item If $\det(\sigma)=1$, then $\sigma$ is conjugate to the matrix $\begin{bsmallmatrix} 0 & 1 \\ 1 & 0\end{bsmallmatrix}$ by Lemma~\ref{involution}, and 
		$$
		\begin{bmatrix} 1 &1 \\ 1 & -1 \end{bmatrix} \cdot \begin{bmatrix} 0 & 1 \\ 1 & 0\end{bmatrix} \cdot \begin{bmatrix} 1 &1 \\1 & -1\end{bmatrix} = \begin{bmatrix} - 1 & 0 \\ 0 & 1\end{bmatrix}.
		$$
		In particular, $\sigma$ is diagonalizable and fixes two sections. 
		\item If $\det(\sigma)\neq 1$, then $\sigma$ is conjugate to $\sigma_f=\begin{bsmallmatrix} 0 & f \\ 1 & 0 \end{bsmallmatrix}$, where $f\in \kk(C)^*$ is not a square by Lemma~\ref{involution}. Assume $\sigma_f$ is conjugate to a diagonal matrix $\begin{bsmallmatrix} \alpha & 0 \\ 0 & \beta \end{bsmallmatrix}$. By taking the determinant, there exists a $\lambda\in \kk(C)^*$ such that $\lambda^2 \alpha \beta = f$. Since $\sigma$ has order two, $\alpha^2 = \beta^2 $, which implies that $f^2 = \lambda^4 \alpha^4$. This contradicts the assumption that $f$ is not a square.
		
		The equation $[f(x)v:u] = [u:v]$ is equivalent to $u^2 - v^2 f(x)=0$, which defines an irreducible curve $Q$ in $C\times \PP^1$. The first projection of $C\times \PP^1$ restricted to $Q$ is a $2$-to-$1$ cover of $C$, and $Q$ is birational to a smooth curve having an even positive number of ramification points by Hurwitz's formula \cite[Corollary IV.2.4]{Hartshorne}.
\qedhere	\end{enumerate}
\end{proof}

\begin{corollary}\label{segrenegativedeterminanttrivial}
	Let $C$ be a curve. Let $\pi\colon S\to C$ be a ruled surface such that $\seg(S)<0$, let $p_1\colon C\times \PP^1 \to C$ be the first projection, and let $f\colon S\dashrightarrow C\times \PP^1$ be a birational map such that $p_1f=\pi$. Then $f\Aut_C(S)f^{-1} \subset \operatorname{PSL}(2,\kk(C))$.
\end{corollary}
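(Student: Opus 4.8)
The plan is to reduce the statement to a claim about a single element $\psi\in\Aut_C(S)$: since conjugation by $f$ carries $\operatorname{Bir}_C(S)$ isomorphically onto $\operatorname{Bir}_C(C\times\PP^1)=\operatorname{PGL}(2,\kk(C))$, the subgroup $f\Aut_C(S)f^{-1}$ lies in $\operatorname{PGL}(2,\kk(C))$, and it suffices to show that $\det(f\psi f^{-1})$ is a square in $\kk(C)^*$ for every $\psi$. I would first note that this determinant class does not depend on the chosen $f$: any other birational map $f'\colon S\dashrightarrow C\times\PP^1$ with $p_1f'=\pi$ satisfies $f'f^{-1}\in\operatorname{Bir}_C(C\times\PP^1)=\operatorname{PGL}(2,\kk(C))$, so $f'\psi f'^{-1}$ and $f\psi f^{-1}$ are conjugate, and $\det$ into $\kk(C)^*/(\kk(C)^*)^2$ is conjugation invariant; hence I am free to work with a convenient $f$. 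Next I would use that $\seg(S)=-n<0$, so by Lemma \ref{segrenegative} (1) there is a unique section $s$ of negative self-intersection; it is $\Aut_C(S)$-invariant, and since $\pi\psi=\pi$ while $\pi|_s\colon s\to C$ is an isomorphism, every $\psi\in\Aut_C(S)$ fixes $s$ pointwise (equivalently, $\psi$ is in case (1) of Lemma \ref{invariantsection}).

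The geometric input I would then exploit is the action of $\psi$ on the normal bundle $N=N_{s/S}$, a line bundle on $s\simeq C$ of degree $-n$. Because $\psi$ fixes $s$ pointwise it induces an automorphism of $N$ lying over $\mathrm{id}_s$, and since $s$ is proper and connected this is multiplication by a single scalar $\delta\in\kk^*$. To identify $\delta$ with the determinant I would pick $f$ regular over a non-empty open $U\subseteq C$ and, after post-composing with an element of $\operatorname{PGL}(2,\kk(C))$ and shrinking $U$, arrange that $f$ sends $s|_U$ to the section $U\times\{[1:0]\}$ with $t=y_1/y_0$ vanishing along it, so that $\partial/\partial t$ trivialises $N|_U$. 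Since $\pi\psi=\pi$, the map $g=f\psi f^{-1}$ restricts over $U$ to a regular $U$-automorphism of $U\times\PP^1$ fixing $U\times\{[1:0]\}$ pointwise, hence of the form $\begin{bmatrix}1&b\\0&d\end{bmatrix}$ with $b\in\mathcal{O}_C(U)$ and $d\in\mathcal{O}_C(U)^*$; a short computation in this chart shows that $\psi$ acts on $N|_U$ by multiplication by $d$. Comparing with the previous sentence forces $d=\delta\in\kk^*$, so $\det(f\psi f^{-1})=\delta$ is a square in $\kk(C)^*$ because $\kk$ is algebraically closed. As $\psi$ was arbitrary, $f\Aut_C(S)f^{-1}\subset\operatorname{PSL}(2,\kk(C))$.

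The step I expect to be the main obstacle is exactly the identification $d=\delta$: a priori the lower-right coefficient $d$ is only a unit on the open set over which $f$ happens to be regular, and a rational function that is merely a unit on a non-empty open need not be a square, so one must see that $d$ is forced to be constant. What rescues the argument is that $d$ is not an arbitrary such unit but records the intrinsic action of $\psi$ on the normal bundle of the $\Aut_C(S)$-invariant section, which is a global section of $\mathcal{O}_C^*$ and hence a constant in $\kk^*$. An equivalent, more bundle-theoretic route would be to observe that $\psi$ preserves the unique minimal section, hence, writing $S=\PP(E)$, it lifts to an automorphism of the vector bundle $E$ itself --- no twist by a non-trivial line bundle is possible since such a twist would have to fix the maximal sub-line-bundle of $E$ --- and then $\det(f\psi f^{-1})$ is the determinant of this lift, an element of $\operatorname{Aut}(\det E)=H^0(C,\mathcal{O}_C^*)=\kk^*$, again a square. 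Either way the heart of the matter is to pin the determinant of the birational conjugate to data living on the base curve $C$.
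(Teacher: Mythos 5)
Your proof is correct, and it follows a genuinely different route from the paper's. The paper argues by contradiction via fixed loci: every $\sigma\in\Aut_C(S)$ fixes pointwise the unique minimal section (Lemma~\ref{segrenegative}~(1)), and if $\det(\sigma)$ were non-trivial then, by Lemma~\ref{determinantinvolution}~(2), $\sigma$ would also fix pointwise an irreducible bisection, hence three points of a general fibre, forcing $\sigma=\mathrm{id}$. You instead compute the determinant directly: in a chart where the invariant section becomes $[1:0]$, the conjugate is upper triangular with determinant $d$, and $d$ is the scalar by which $\sigma$ acts on the normal bundle $N_{s/S}$, an element of $H^0(s,\mathcal{O}_s^*)=\kk^*$ and therefore a square in $\kk(C)^*$. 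Your identification of $d$ with this intrinsic scalar (the step you flag as delicate) is sound, as is the preliminary reduction to a single well-placed $f$ via conjugation-invariance of $\det$ modulo squares. What your route buys: it is direct, characteristic-free, and applies verbatim to every element of $\Aut_C(S)$, whereas Lemma~\ref{determinantinvolution} is stated only for involutions; for non-involutive elements the dichotomy it provides fails in general (a non-constant diagonal element of $\operatorname{PGL}(2,\kk(C))$ has non-square determinant yet fixes two sections and no bisection), so the paper's short argument is, as written, only complete for involutive $\sigma$ --- which happens to be the only case in which the corollary is later applied. The paper's proof is shorter given the lemmas already available; yours is self-contained and covers the full statement.
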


\begin{proof}
	Let $\sigma\in \Aut_C(S)$. Then $f\sigma f^{-1}\in \operatorname{PGL}(2,\kk(C))$. Since $\seg(S)<0$, there exists a section $s\colon C \to S$ of negative self-intersection which is fixed by $\sigma$. Assume that $\det(\sigma)\neq 1$; then $\sigma$ also fixes pointwise an irreducible curve which is a $2$-to-$1$ cover of $C$ by Lemma~\ref{determinantinvolution}\eqref{detinv-2}. Then $\sigma$ fixes three distinct points in a general fibre; hence $\sigma$ equals the identity, which contradicts the inequality $\det(\sigma)\neq 1$.
\end{proof}

\begin{lemma}\label{Z/2Z}
	Let $C$ be a curve of positive genus. Let $\kappa\colon X\to C$ be a ruled surface, or a conic bundle with at least one singular fibre such that its two irreducible components are exchanged by an element of $\Aut(X)$. If $\Aut_C(X)\simeq (\mathbb{Z}/2\mathbb{Z})^r$ with $r\in \{1,2\}$, then $\det(\sigma)\neq 1$ for all $\sigma\in \Aut_C(X)\setminus \{1\}$. In particular, if $r=2$, then $\kappa$ is a $(\mathbb{Z}/2\mathbb{Z})^2$-conic bundle. 
\end{lemma}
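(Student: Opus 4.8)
The plan is to argue by contradiction. Suppose some $\sigma\in\Aut_C(X)\setminus\{1\}$ has $\det(\sigma)=1$; since $\Aut_C(X)\simeq(\mathbb{Z}/2\mathbb{Z})^r$ is finite, $\sigma$ is an involution, and I will exhibit a copy of $\mathbb{G}_m$ inside $\Aut_C(X)$, which is the desired contradiction.

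Fix an identification $\operatorname{Bir}_C(X)\simeq\operatorname{PGL}(2,\kk(C))$. By Lemma \ref{determinantinvolution}~(1) the involution $\sigma$ is diagonalisable and fixes pointwise two rational sections of $\kappa$; taking their Zariski closures in $X$ and using that $\kappa$ restricted to each of them is a birational morphism onto the smooth curve $C$, hence an isomorphism, we obtain two genuine sections $s_1,s_2$ of $\kappa$ fixed pointwise by $\sigma$. Since $\operatorname{char}(\kk)\neq 2$, the fixed locus $\operatorname{Fix}(\sigma)\subset X$ is smooth, so $s_1$ and $s_2$ are disjoint (and any remaining component of $\operatorname{Fix}(\sigma)$ is a point lying in a singular fibre). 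If $\kappa$ is a ruled surface, the pair of disjoint sections $s_1,s_2$ presents $X$ as a decomposable $\PP^1$-bundle $X\simeq\PP(L_1\oplus L_2)$ in which $s_1,s_2$ are the sub-bundle sections; the $\mathbb{G}_m$ acting fibrewise by rescaling the summand $L_1$ fixes $s_1$ and $s_2$ and lies in $\Aut_C(X)$, contradicting its finiteness.

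Suppose now that $\kappa$ has a singular fibre. By Lemma \ref{involutionsections} applied to $\sigma$, in each singular fibre the sections $s_1$ and $s_2$ meet distinct irreducible components; as there are exactly two components and a section never meets the node, each component meets exactly one of $s_1,s_2$, hence contains a $\sigma$-fixed point lying on no other component, so $\sigma$ preserves every component of every singular fibre. Consequently the contraction $\eta\colon X\to S$ of one $(-1)$-curve in each singular fibre is $\sigma$-equivariant, $S$ is a ruled surface, and $\eta\sigma\eta^{-1}\in\Aut_C(S)$ fixes pointwise the disjoint sections $\eta(s_1),\eta(s_2)$. As before $S\simeq\PP(L_1\oplus L_2)$ is decomposable and carries a $\mathbb{G}_m\subset\Aut_C(S)$ acting fibrewise and fixing $\eta(s_1)$ and $\eta(s_2)$. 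In each singular fibre the point contracted by $\eta$ lies on the image of the section meeting the contracted component, hence is fixed by this $\mathbb{G}_m$; therefore the $\mathbb{G}_m$-action lifts to the blow-up $X$ and sits in $\Aut_C(X)$, again a contradiction. This proves $\det(\sigma)\neq 1$ for every $\sigma\in\Aut_C(X)\setminus\{1\}$. Finally, when $r=2$, $X$ is a conic bundle with $\Aut_C(X)\simeq(\mathbb{Z}/2\mathbb{Z})^2$, and for each non-trivial $\sigma$ the condition $\det(\sigma)\neq 1$ together with Lemma \ref{determinantinvolution}~(2) identifies the one-dimensional part of $\operatorname{Fix}(\sigma)$ --- smooth since $\operatorname{char}(\kk)\neq 2$ --- with an irreducible curve that is a $2$-to-$1$ cover of $C$ ramified above an even positive number of points; by Definition \ref{definitionsconic}~(5) this means $\kappa$ is a $(\mathbb{Z}/2\mathbb{Z})^2$-conic bundle.

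The step I expect to be the main obstacle is making the description of $\operatorname{Fix}(\sigma)$ sufficiently precise: one must check that the sections fixed by $\sigma$ are honest sections of $\kappa$ rather than rational sections acquiring vertical components, that $\operatorname{Fix}(\sigma)$ contains no unexpected fibre component, and that the points blown up by $\eta$ really do lie on the strict transforms of $s_1$ or $s_2$, so that the torus action legitimately descends to $S$ and lifts back to $X$.
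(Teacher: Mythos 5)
Your proof is correct, but it takes a genuinely different route from the paper's. The paper argues case by case: when $\kappa$ has a singular fibre, it shows that the subgroup $G$ preserving all components of the singular fibres fixes a section and invokes Lemma~\ref{invariantexceptional}~(1) to conclude that $\kappa$ would be an exceptional conic bundle, whose $\Aut_C$ contains $\mathbb{G}_m$ by Lemma~\ref{exceptionalmaximal}, contradicting finiteness; when $\kappa$ is a ruled surface, it observes that the two fixed sections must \emph{meet} (the surface being indecomposable of positive Segre invariant), separates them by elementary transformations, and then forces $\sigma=1$ by an explicit computation in local trivializations. You instead use, uniformly in both cases, the smoothness of the fixed locus of a tame involution ($\operatorname{char}(\kk)\neq 2$) to see that the two fixed sections are already \emph{disjoint}, deduce decomposability of $X$ (or of the contraction $S$, the $\sigma$-equivariance of the contraction being secured via Lemma~\ref{involutionsections} exactly as in the paper), and produce the contradictory $\mathbb{G}_m\subset\Aut_C(X)$ directly, lifting it through the blow-up because the blown-up points lie on the fixed sections. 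This yields a shorter and more conceptual argument that bypasses Lemma~\ref{invariantexceptional} and the chart computation; the price is the extra input that $\operatorname{Fix}(\sigma)$ is smooth, a standard linearization fact the paper never needs, and the bookkeeping you rightly flag (closures of rational sections are honest sections by Zariski's main theorem; the centres of the blow-up sit on the images of the fixed sections), all of which you handle correctly. Your derivation of the ``in particular'' clause for $r=2$ coincides with the paper's, both resting on Lemma~\ref{determinantinvolution}~(2), and your smoothness observation even tightens the passage from ``birational to a $2$-to-$1$ cover'' to the cover required by Definition~\ref{definitionsconic}~(5).
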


\begin{proof}
	First assume that $\kappa$ has at least one singular fibre, and let $\eta \colon X\to S$ be the contraction of an irreducible component in each singular fibre. Let $G$ be the normal subgroup of $\Aut_C(X)$ which leaves invariant each irreducible component of the singular fibres.
	
	Suppose that $r=1$. Let $\sigma_1\in \Aut_C(X)$ be the element of order two, and assume that $\det(\sigma_1)=1$. By Lemma~\ref{determinantinvolution}\eqref{detinv-1}, the automorphism $\sigma_1$ fixes pointwise two sections which do not pass through the same irreducible components in each singular fibre (see Lemma~\ref{involutionsections}), and $\eta$ is $\sigma_1$-equivariant. This implies that $\eta G \eta^{-1}$ has an invariant section, and by Lemma~\ref{invariantexceptional}\eqref{invariantexc-1}, $\kappa$ is an exceptional conic bundle, which contradicts the assumption that $\Aut_C(X)$ is finite (see Lemma~\ref{exceptionalmaximal}).
	
	Suppose that $r=2$. Let $\sigma_1,\sigma_2,\sigma_3$ be the elements of order two in $\Aut_C(X)$. If $\det(\sigma_i) \neq 1$ for all $i\in\{1,2,3\}$, then $\kappa$ is a $(\mathbb{Z}/2\mathbb{Z})^2$-conic bundle by Lemma~\ref{determinantinvolution}\eqref{detinv-2}. Without loss of generality, we can assume by contradiction that $\det(\sigma_1)=1$. By Lemma~\ref{determinantinvolution}\eqref{detinv-1}, the automorphism $\sigma_1$ fixes two sections $s_1$ and $s_2$ which do not pass through the same irreducible components in each singular fibre (see Lemma~\ref{involutionsections}), and $\eta$ is $\sigma_1$-equivariant.
	In particular, $\sigma_1\in G$, and $G$ is not trivial. Let $\sigma_j\in G$, and denote by $\Fix(\sigma_1)$ the set of fixed points of $\sigma_1$. Then $\sigma_j(\Fix(\sigma_1)) = \Fix(\sigma_1)$. It follows that $\sigma_j$ either permutes $s_1$ and $s_2$ or leaves them invariant. Since we assume that $\sigma_j\in G$, it follows that $\sigma_j$ must leave them invariant. This implies that $G$ leaves $s_1$ and $s_2$ invariant, and by Lemma~\ref{invariantexceptional}\eqref{invariantexc-1}, $\kappa$ is an exceptional conic bundle. This contradicts the assumption that $\Aut_C(X)$ is finite (see Lemma~\ref{exceptionalmaximal}).
	
	Assume from now on that $\kappa$ has no singular fibre and $\Aut_C(X)\simeq(\mathbb{Z}/2\mathbb{Z})^r$ for $r\in\{1,2\}$. Then $X$ is a ruled surface with $\seg(X)>0$; see  \cite[Theorem 2]{Maruyama}. Let $\sigma_1$ be an element of order two with $\det(\sigma_1)=1$; then $\sigma_1$ fixes two sections $s_1$ and $s_2$ (see Lemma~\ref{determinantinvolution}\eqref{detinv-1}) which intersect because $X$ is indecomposable; see \cite[Proposition 2.18(1)]{fong}. Applying elementary transformations centred on the intersections yields a $\sigma_1$-equivariant birational map $\phi\colon X\dashrightarrow S$, where $\pi\colon S\to C$ is a decomposable ruled surface. The automorphism $\phi \sigma_1 \phi^{-1}$ fixes the strict transforms of $s_1$ and $s_2$ which are disjoint sections of $\pi$ and the basepoints of $\phi^{-1}$ which are in the complement of $s_1\cup s_2$. Choose trivializations $(U_i)_i$ of $\pi$ such that the strict transforms of $s_1$ and $s_2$ are the zero and infinity sections of $\pi$. Then we have $(\phi \sigma_1 \phi^{-1})_{|U_i}\colon (x,[u:v])\mapsto (x,[\alpha_i(x) u:v])$ for some $\alpha_i\in \mathcal{O}_C(U_i)^*$, and the transition maps are of the form $s_{ij}\colon U_j\times \PP^1  \dashrightarrow U_i\times \PP^1$, $(x,[u:v])\mapsto (x,[t_{ij}(x)u:v])$ for some $t_{ij}\in \mathcal{O}_C(U_{ij})^*$. The condition $ (\phi \sigma_1 \phi^{-1})_{|U_i} s_{ij} = s_{ij} (\phi \sigma_1 \phi^{-1})_{|U_j}$ implies that there exists an $\alpha \in \mathbb{G}_m$ such that $\alpha_i=\alpha_j = \alpha$ for all $i,j$. Since $\phi \sigma_1 \phi^{-1}$ fixes the basepoints of $\phi^{-1}$ which do not lie in $s_1 \cup s_2$, this implies that $\alpha=1$; \textit{i.e.}\ $\sigma_1$ equals identity, which gives a contradiction. 
\end{proof}

\begin{lemma}\label{Z/2Z^2max}
	Assume that $\operatorname{char}(\kk)\neq 2$. Let $C$ be a curve of positive genus, and let $\kappa\colon X\to C$ be a $(\mathbb{Z}/2\mathbb{Z})^2$-conic bundle. Then $\Aut(X)$ is a maximal algebraic subgroup of\, $\operatorname{Bir}(C\times \PP^1)$. Moreover, $\Aut(X)$ fits into an exact sequence
	$$
	1 \longrightarrow (\mathbb{Z}/2\mathbb{Z})^2 \longrightarrow \Aut(X) \overset{\pi_*}{\longrightarrow} \Aut(C),
	$$
	where the image of $\pi_*$ equals $\Aut(C)$ if $X\simeq \mathcal{A}_1$ $($or, equivalently, $C$ is an elliptic curve and $\kappa$ is the only $(\mathbb{Z}/2\mathbb{Z})^2$-ruled surface over $C)$ and otherwise equals a finite subgroup of $\Aut(C)$ preserving the set of singular fibres of $\kappa$ $($which is possibly empty if $\kappa$ is a ruled surface over a curve $C$ of genus $\geq 2)$.
\end{lemma}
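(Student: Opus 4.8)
First I would dispatch the exact sequence: the kernel of $\pi_*\colon\Aut(X)\to\Aut(C)$ is $\Aut_C(X)$, which equals $(\mathbb{Z}/2\mathbb{Z})^2$ by the definition of a $(\mathbb{Z}/2\mathbb{Z})^2$-conic bundle. For the image, I would split into two cases. If $\kappa$ has a singular fibre, then $\pi_*(\Aut(X))$ preserves the non-empty finite subset of $C$ over which $\kappa$ is singular, and any such subgroup of $\Aut(C)$ is finite (automatic for $g\geq 2$; for $g=1$ a translation fixing a finite non-empty set is torsion), so the image is a finite subgroup of $\Aut(C)$ preserving the singular fibres, and in particular $X\not\simeq\mathcal{A}_1$. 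If $\kappa$ is a ruled surface, then $\seg(X)>0$, since $\seg(X)\leq 0$ would force $\Aut_C(X)$ to be one of $\operatorname{PGL}(2,\kk)$, $\mathbb{G}_m$ or $\mathbb{G}_m\rtimes\mathbb{Z}/2\mathbb{Z}$ by Lemmas~\ref{segrenegative}, \ref{decomposableseg0} and \ref{invariantsection}, none of which is isomorphic to $(\mathbb{Z}/2\mathbb{Z})^2$; thus $X$ is a $(\mathbb{Z}/2\mathbb{Z})^2$-ruled surface. For $g\geq 2$ the image is automatically finite; for $g=1$ I would invoke uniqueness of the $(\mathbb{Z}/2\mathbb{Z})^2$-ruled surface over an elliptic curve (Theorem~\ref{A}~(4), and \cite{Atiyah}, \cite{fong}, \cite{Maruyama}) to identify $X$ with $\mathcal{A}_1$, and then observe that for every $\varphi\in\Aut(C)$ the pullback $\varphi^*\mathcal{A}_1$ is again a $(\mathbb{Z}/2\mathbb{Z})^2$-ruled surface over $C$, hence $C$-isomorphic to $\mathcal{A}_1$, which lifts $\varphi$ to $\Aut(\mathcal{A}_1)$; so $\pi_*$ is surjective onto $\Aut(C)$.

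For maximality, the plan is to verify the two conditions that turn Lemma~\ref{conicequivariant} into a maximality criterion: \emph{(i)} the pair $(\Aut(X),X)$ is minimal, and \emph{(ii)} every orbit of $\Aut(X)$ on the complement of the singular fibres contains two points lying on a common smooth fibre. Granting (i) and (ii), the argument runs as follows. Let $G\subseteq\operatorname{Bir}(C\times\PP^1)$ be an algebraic subgroup with $\Aut(X)\subseteq G$. Proposition~\ref{regularization} followed by Proposition~\ref{Iskovskikh} (here positivity of the genus is used) produces a conic bundle $\kappa'\colon Y\to C$ with $(G,Y)$ minimal, $G\subseteq\Aut(Y)$, and a fibration-preserving birational map $\phi\colon X\dashrightarrow Y$ with $\phi\Aut(X)\phi^{-1}\subseteq G$. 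Running the $\phi\Aut(X)\phi^{-1}$-equivariant MMP on $Y$ and applying Proposition~\ref{Iskovskikh} once more yields a minimal conic bundle $Y'\to C$ together with an $\Aut(X)$-equivariant birational map $X\dashrightarrow Y'$ between minimal conic bundles; by Lemma~\ref{conicequivariant} together with (i) and (ii) this map must be an isomorphism, so $X\simeq Y'$. Since $(G,Y)$ is minimal and $G\subseteq\Aut(Y)$, the pair $(\Aut(Y),Y)$ is also minimal, and I would conclude that the birational morphism $Y\to Y'\simeq X$ is an isomorphism and $\Aut(Y)=\phi\Aut(X)\phi^{-1}$, hence $G=\Aut(X)$.

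It remains to establish (i) and (ii). By adjunction, the only $(-1)$-curves in a conic bundle over $C$ are the components of the singular fibres and possibly $(-1)$-sections, and $X$ has no $(-1)$-section: for $\kappa$ ruled this follows from $\seg(X)>0$, while in general an $\Aut_C(X)$-invariant section $s$ would be fixed pointwise by each $\sigma_i\in\Aut_C(X)$ (an automorphism of $s\simeq C$ lying over $\mathrm{id}_C$ is trivial), forcing $s\subseteq\mathrm{Fix}(\sigma_i)$ and hence $s=R_i$, the irreducible $2$-to-$1$ cover of $C$ fixed by $\sigma_i$, which is absurd since $s\to C$ is an isomorphism; and a $(-1)$-section moved by some $\sigma_i$ produces two disjoint $(-1)$-sections swapped by $\sigma_i$, whence (via Lemmas~\ref{involution} and \ref{involutionnormalizer}) the third involution of $\Aut_C(X)$ fixes both, again absurd. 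Next, in each singular fibre $E\cup E'$ the subgroup of $\Aut_C(X)\simeq(\mathbb{Z}/2\mathbb{Z})^2$ fixing $E$ setwise has index two: otherwise $(\mathbb{Z}/2\mathbb{Z})^2$ would act on $E\simeq\PP^1$ fixing the node, hence non-faithfully, so some $\sigma_i$ would fix $E$ pointwise and $E=R_i$, which is impossible. Therefore the $\Aut_C(X)$-orbit of $E$ is $\{E,E'\}$, a set of two intersecting curves, so no non-empty set of disjoint $(-1)$-curves is $\Aut(X)$-invariant and (i) holds. For (ii), the same non-faithfulness argument shows $\Aut_C(X)$ acts faithfully on each smooth fibre $\simeq\PP^1$, and a faithful copy of $(\mathbb{Z}/2\mathbb{Z})^2$ in $\operatorname{PGL}(2,\kk)$ has no fixed point on $\PP^1$, so the $\Aut_C(X)$-orbit of any point of a smooth fibre already contains two points of that fibre.

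The main obstacle I anticipate is the reconciliation step at the end of the second paragraph: since $G$ may be strictly larger than $\Aut(X)$, its minimal model $Y$ need not be minimal for $\phi\Aut(X)\phi^{-1}$, so one must descend along an MMP and then argue that the resulting birational morphism $Y\to Y'\simeq X$ is an isomorphism and that $\Aut(Y)$ is exactly $\phi\Aut(X)\phi^{-1}$. This is precisely where condition (ii) and the sharp form of Lemma~\ref{conicequivariant} are needed: any exceptional curve of $Y\to Y'$ would, after being contracted the other way, correspond to blowing up a finite $\Aut(X)$-orbit on the complement of the singular fibres of $X$ containing no two points on a common smooth fibre, which (ii) forbids. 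The remaining technical work is to make the classification of $(-1)$-curves and the exclusion of $(-1)$-sections fully rigorous, and to carry out the MMP-descent and the identification $\Aut(Y)=\phi\Aut(X)\phi^{-1}$ carefully.
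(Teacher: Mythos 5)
Your proposal is correct and follows essentially the same route as the paper: the exact sequence is handled by the same case division (a singular fibre forces a finite image in $\Aut(C)$; otherwise $\seg(X)>0$ and $X\simeq\mathcal{A}_1$ when $g=1$), and maximality comes down to Lemma~\ref{conicequivariant} together with the fact that $\Aut_C(X)\simeq(\mathbb{Z}/2\mathbb{Z})^2$ acts faithfully, hence without fixed points, on every smooth fibre, so no finite orbit admits an elementary transformation. The only real difference is that you make explicit the minimality of $(\Aut(X),X)$ and the MMP-descent reconciling the $G$-minimal model with the $\Aut(X)$-minimal one — steps the paper leaves implicit when it invokes Lemma~\ref{conicequivariant}.
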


\begin{proof}
	Let $p\in C$ be such that $\kappa^{-1}(p)$ is a smooth fibre. The group $\Aut_C(X)\simeq (\mathbb{Z}/2\mathbb{Z})^2$ acts on $\kappa^{-1}(p)$. Any $\sigma \in \Aut_C(X)$ of order two is the form 
	$\begin{bsmallmatrix}
		a & b \\ c & -a
	\end{bsmallmatrix}$ 
	for some $a,b,c\in \mathcal{O}_C(U)$, where $U$ is an open neighbourhood of $p$. If $\sigma$ equals identity on $\kappa^{-1}(p)$, then $b(p)=c(p)=0$ and $a(p)=-a(p)\neq 0$. Hence $\sigma$ does not act trivially on $\kappa^{-1}(p)$; \textit{i.e.}\ the action of $(\mathbb{Z}/2\mathbb{Z})^2$ over $\kappa^{-1}(p)$ is faithful. Moreover, any $(\mathbb{Z}/2\mathbb{Z})^2 \subset \operatorname{PGL}(2,\kk)$ acts without fixed points,
        and by Lemma~\ref{conicequivariant}, the subgroup $\Aut(X)$ is maximal.
	
	It remains to prove the exact sequence. By definition, the kernel of $\pi_*$ is isomorphic to $(\mathbb{Z}/2\mathbb{Z})^2$. First assume that $\kappa$ is a ruled surface. If $g=1$, the assumption that $\Aut_C(S)$ is finite implies that $\seg(X)>0$ (see \cite[Theorem 2]{Maruyama}) and $\kappa$ is an indecomposable ruled surface (see \cite[Proposition 2.18(1)]{fong}). Then $X$ is $C$-isomorphic to $\mathcal{A}_0$ or $\mathcal{A}_1$ (see \cite[Theorem V.2.15]{Hartshorne}), which respectively satisfy $\seg(\mathcal{A}_0)=0$ and $\seg(\mathcal{A}_1)=1$ (see \cite[Proposition 2.21]{fong}).
	Therefore, $X$ is isomorphic to $\mathcal{A}_1$, and the exact sequence follows from \cite[Theorem 3(4)]{Maruyama}. If $g\geq 2$, the statement holds because $\Aut(C)$ is a finite group (see \cite[Exercise IV.2.5]{Hartshorne}). Assume that $\kappa$ has a singular fibre. Then any element of $\Aut(X)$ has to preserve to set of singular fibres, which is finite. It follows that the morphism $\kappa_*\colon \Aut(X) \to \Aut(C)$ has  finite image.
\end{proof}

\subsection{Ruled surfaces}

\begin{proposition}\label{autoruled}
	Assume that $\operatorname{char}(\kk)\neq2$. Let $C$ be a curve of genus $g\geq 1$ and $\pi\colon S\to C$ be a ruled surface. The following hold:
	\begin{enumerate}
		\item If\, $\pi$ is trivial, then $\Aut(S)$ is maximal.
		\item If\, $\seg(S)=0$, $\pi$ is not trivial and $S\simeq \PP(\mathcal{O}_C(D)\oplus \mathcal{O}_C)$ is decomposable, then $\Aut(S)$ is maximal if and only if $g=1$, or $g\geq 2$ and $2D$ is principal. If $g\geq 2$ and $2D$ is not principal, then $\Aut(S)$ can be embedded in an infinite increasing sequence of algebraic subgroups of\, $\operatorname{Bir}(C\times \PP^1)$.
		\item\label{autoruled-3} If\, $\seg(S)=0$ and $S$ is indecomposable, then $\Aut(S)$ is maximal if and only if $g=1$. If $g\geq 2$, then $\Aut(S)$ can be embedded in an infinite increasing sequence of algebraic subgroups of\, $\operatorname{Bir}(C\times \PP^1)$.
		\item If\, $\seg(S)>0$, then $\Aut(S)$ is maximal if and only if $S$ is a $(\mathbb{Z}/2\mathbb{Z})^2$-ruled surface.
	\end{enumerate}
\end{proposition}

\begin{proof}
	\begin{enumerate}[wide]
		\item If $S$ is trivial, each $\Aut(S)$-orbit contains at least a fibre. Hence $\Aut(S)$ is maximal by Lemma~\ref{conicequivariant}.
		\item If $g=1$, then the morphism $\Aut^\circ(S)\to \Aut^\circ(C)$ is surjective; see \cite[Proposition 3.9]{fong}. There is no $\Aut(S)$-orbit of finite dimension; hence $\Aut(S)$ is maximal. 
		
		Assume that $g\geq 2$. If $2D$ is principal, then $\Aut_C(S) \simeq \mathbb{G}_m\rtimes \mathbb{Z}/2\mathbb{Z}$ (see Lemma~\ref{decomposableseg0}\eqref{decompseg0-2}), and the group $\Aut_C(S)$ acts on a fibre with two orbits: one is isomorphic to $\mathbb{G}_m$, and the other one is made of two points exchanged by the involution. From Lemma~\ref{conicequivariant}, $\Aut(S)$ is a maximal algebraic subgroup. If $2D$ is not principal, $\Aut(S)\simeq \mathbb{G}_m$ (see Lemma~\ref{decomposableseg0}\eqref{decompseg0-3}). Take a point in a minimal section; its $\Aut(S)$-orbit is finite and contains at most one point in each fibre. The blowup of this orbit followed by the contraction of the strict transforms of the fibres gives an $\Aut(S)$-equivariant birational map $S\dashrightarrow S'$, where $S'$ is a ruled surface with $\seg(S')<0$. Then apply Lemma~\ref{infiniteseq}.
		\item From \cite[Proposition 2.18(3.ii)]{fong}, $S$ has a unique minimal section which is $\Aut(S)$-invariant. If $g \geq 2$, take a point on this minimal section and blow up its orbit (which consists of finitely many points on the minimal section), then contract the strict transforms of the fibres. This defines an $\Aut(S)$-equivariant map $S\dashrightarrow S'$ with $\seg(S')<0$. Then apply Lemma~\ref{infiniteseq}. If $g=1$, then $S\simeq \mathcal{A}_0$ and $\pi_*\colon \Autzero(S) \to \Autzero(C)$ is surjective; see \cite[Proposition 3.6]{fong}. In particular, there is no $\Aut(S)$-orbit of dimension $0$ and no $\Aut(S)$-equivariant map. Thus $\Aut(S)$ is maximal.
		\item Assume that $\seg(S)>0$. In particular, $\pi$ is indecomposable (see \textit{e.g.}~\cite[Proposition 2.18(1)]{fong}). If $g=1$, then $S\simeq \mathcal{A}_1$ is a $(\mathbb{Z}/2\mathbb{Z})^2$-ruled surface and $\Aut(\mathcal{A}_1)$ is maximal by Lemma~\ref{Z/2Z^2max}. From now on, we assume that $g\geq 2$. By \cite[Lemma 3, Theorem 2]{Maruyama}, $\Aut_C(S)$ is isomorphic to a subgroup of $\Pic^0(C)[2]$. In particular, it is a finite subgroup of $\operatorname{PGL}(2,\kk)$ such that every element is an involution. Hence $\Aut_C(S) \simeq (\mathbb{Z}/2\mathbb{Z})^s$ for some $s\in \{0,1,2\}$. Moreover, $\Aut(C)$ is finite, and this implies that $\Aut(S)$ is also finite.
		
		  By Lemma~\ref{Z/2Z}, each non-trivial element of $\Aut_C(S)$ has a non-trivial determinant. If $s=0$, then $\Aut(S)$ is conjugate to a finite subgroup of $\Aut(C)\subsetneq \Aut(C\times \PP^1)$ by Proposition~\ref{key}\eqref{key-1}.  If $s=1$, then by Proposition~\ref{key}\eqref{key-2}, $\Aut(S)$ normalizes a group $V\simeq (\mathbb{Z}/2\mathbb{Z})^2$ containing $\Aut_C(S)$; \textit{i.e.}\ there exists a finite subgroup $G\subset \Bir(C\times \PP^1)$ containing $\Aut(S)$ such that $V$ is the kernel of the action of $G$ on $C$. In particular, $\Aut(S)\subsetneq G$. Therefore, we get that $\Aut(S)$ is not maximal if $s\in \{0,1\}$. If $s=2$, then $S$ is a $(\mathbb{Z}/2\mathbb{Z})^2$-ruled surface. Conversely, the automorphism group of a $(\mathbb{Z}/2\mathbb{Z})^2$-ruled surface is maximal by Lemma~\ref{Z/2Z^2max}.
\qedhere	\end{enumerate}
\end{proof}

\subsection{Examples of $\boldsymbol{(\mathbb{Z}/2\mathbb{Z})^2}$-conic bundles}

If $C$ is an elliptic curve, the Atiyah bundle $\mathcal{A}_1$ is the only $ (\mathbb{Z}/2\mathbb{Z})^2$-ruled surface. We give below examples of $ (\mathbb{Z}/2\mathbb{Z})^2$-conic bundles over any curve $C$ of genus $g\geq 2$. If $X$ is a $(\mathbb{Z}/2\mathbb{Z})^2$-conic bundle over $\PP^1$ with at least one singular fibre, then every element of order two in $\Aut_{\PP^1}(X)$ acts non-trivially on $\operatorname{Pic}(X)$ by permuting the irreducible components of a singular fibre (by \cite[Lemma 4.3.5]{Blanc}. If there exists an element of $\Aut_{\PP^1}(X)\setminus \{1\}$ acting trivially on $\Pic(X)$, then $X\to \PP^1$ is an exceptional conic bundle, and thus not a $(\mathbb{Z}/2\mathbb{Z})^2$-conic bundle by \cite[Lemmas 4.3.3(1) and 4.3.5]{Blanc}). The following example also shows that this does not hold anymore when $C$ has positive genus.

\begin{example}
	Assume that $\operatorname{char}(\kk)\neq 2$. Let $C$ be a curve of genus $g\geq 1$ and $D$ be a non-principal divisor such that $2D$ is principal. Let $S$ be the decomposable ruled surface $\PP(\mathcal{O}_C(D)\oplus \mathcal{O}_C)$. From Lemma~\ref{decomposableseg0}, $\Aut_C(S) = \mathbb{G}_m\rtimes \mathbb{Z}/2\mathbb{Z}$, and the element $\sigma$ of order two that generates $\mathbb{Z}/2\mathbb{Z}$ is conjugate to $\begin{bsmallmatrix} 0 & f \\ 1 & 0 \end{bsmallmatrix}$ for some $f\in \kk(C)^*$ such that $\operatorname{div}(f)=2D$. Since $D$ is not principal, $f$ is not a square. In particular, $\det(\sigma)\neq 1$ and $\sigma$ fixes pointwise an irreducible curve birational to a $2$-to-$1$ cover of $C$ and ramified above an even positive number of points (see Lemma~\ref{determinantinvolution}\eqref{detinv-2}). 
	
	The matrix $\tau=\begin{bsmallmatrix}
		a & -bf \\ b & -a
	\end{bsmallmatrix}$ of order two has  determinant $-a^2 +b^2f =- N(a+b\sqrt{f})$, where $N\colon\kk(C)[\sqrt{f}]\to\kk(C) $ is the norm, which is surjective (see \cite[Propositions X.10 and X.11]{Serre}). Choose $a$ and $b$ such that $\det(\tau)$ has a pole with odd multiplicity at a point where $f$ is regular. Then $\det(\tau)\neq 1$ and $\det(\sigma\tau)\neq 1$.
	
	Since $\sigma \tau = \tau \sigma$, the subgroup of $\operatorname{PGL}(2,\kk(C))$ generated by $\sigma$ and $\tau$ is isomorphic to $(\mathbb{Z}/2\mathbb{Z})^2$. Apply Propositions~\ref{Iskovskikh} and~\ref{reduction}; there exists a $\mathbb{Z}/2\mathbb{Z}$-equivariant birational map from $S$ to a conic bundle $X$ such that $(\mathbb{Z}/2\mathbb{Z})^2\subset \Aut_C(X)$, and $X$ is a ruled surface, or an exceptional conic bundle, or a $(\mathbb{Z}/2\mathbb{Z})^2$-conic bundle. Assume that $X$ is not a $(\mathbb{Z}/2\mathbb{Z})^2$-conic bundle. Notice that $X$ also cannot be a ruled surface with $\seg(X)<0$ by Corollary~\ref{segrenegativedeterminanttrivial} or $\seg(X)>0$ by \cite[Lemma 3]{Maruyama}. This implies $X$ is either a ruled surface with $\seg(X)=0$ or an exceptional conic bundle such that $\Aut_C(X)\simeq \mathbb{G}_m\rtimes \mathbb{Z}/2\mathbb{Z}$ (see Lemmas~\ref{decomposableseg0} and~\ref{exceptionalmaximal}) with $\det((-1,0))=1$. Since $\det(\sigma)$, $\det(\tau)$ and $\det(\sigma \tau )$ are all non-trivial; this gives a contradiction. Therefore, $X$ is a $(\mathbb{Z}/2\mathbb{Z})^2$-conic bundle.

\end{example}

\section{Proofs of the results}

\begin{proof}[Proof of Theorem~\ref{A}]
	Each algebraic group in the list is a maximal algebraic subgroup of $\operatorname{Bir}(C\times \PP^1)$ by Lemma~\ref{Z/2Z^2max} and Propositions~\ref{exceptionaltheorem} and~\ref{autoruled}. Conversely, let $G$ be a maximal algebraic subgroup of $\operatorname{Bir}(C\times \PP^1)$, where $C$ is a curve of genus $g\geq 1$. Using the regularization theorem (Proposition~\ref{regularization}) and the $G$-equivariant MMP (Proposition~\ref{Iskovskikh}), it follows that $G$ is conjugate to $\Aut(X)$ for some conic bundle $\kappa\colon X\to C$. If $\kappa$ has no singular fibre, directly apply Proposition~\ref{autoruled}. Else $\kappa$ has at least one singular fibre. If there is no element of $\Aut(X)$ permuting two irreducible components of a singular fibre, then there exists an $\Aut(X)$-equivariant contraction $X\to S$, where $S$ is a ruled surface; apply Proposition~\ref{autoruled} to conclude. Else, apply Proposition~\ref{reduction} with Proposition~\ref{key}; it follows that either $X$ is an exceptional conic bundle, or $\Aut(X)$ is conjugate to a subgroup of $\Aut(C\times \PP^1)$ or $\Aut(X')$, where $X'$ is a $(\mathbb{Z}/2\mathbb{Z})^2$-conic bundle. To conclude, apply Proposition~\ref{exceptionaltheorem} for the case of exceptional conic bundles, and apply Lemma~\ref{Z/2Z^2max} for the case of $(\mathbb{Z}/2\mathbb{Z})^2$-conic bundles. Finally, the exact sequences of \eqref{A-4} in the case $g=1$ and \eqref{A-5} are taken from \cite[Theorem 3]{Maruyama}.
\end{proof}

\begin{proof}[Proof of Corollary~\ref{D}]
	From \cite[Theorem 1]{Blanc}, every algebraic subgroup of $\operatorname{Bir}(\PP^2)$ is included in a maximal one. From Theorem~\ref{A}, every maximal algebraic subgroups of $\operatorname{Bir}(C\times \PP^1)$ has dimension at most four. By Remark~\ref{dimensionarbitrarylarge}, there exist algebraic subgroups of arbitrary large dimension, and they cannot be subgroups of the maximal ones. 
\end{proof}


\end{document}